\theoremstyle{definition}
\newtheorem{thm}{Theorem}[section]
\newtheorem{dfn}[thm]{Definition}
\newtheorem{lemma}[thm]{Lemma}
\newtheorem{cor}[thm]{Corollary}
\newtheorem{prop}[thm]{Proposition}
\newtheorem{ex}[thm]{Example}
\newtheorem{rmk}[thm]{Remark}
\def\ann{\mathrm{Ann}}
\def\endo{\mathrm{End}}
\def\img{\mathrm{Im}}
\def\id{\mathrm{id}}
\def\tr{\mathrm{Tr}}
\def\ccc{\mathbb{C}}
\def\zz{\mathbb{Z}}
\def\rr{\mathbb{R}}
\def\cle{\mathcal{E}}
\def\cll{\mathcal{L}}
\def\clo{\mathcal{O}}
\def\pt{\partial}
\def\p{\partial}
\def\ud{\mathrm{d}}
\def\vol{\mathrm{vol}}
\def\spann{\mathrm{Span}}
\def\bea{\begin{eqnarray}}
\def\eea{\end{eqnarray}}
\def\<{{\langle}}
\def\>{{\rangle}}
\begin{document}

\title{The Geometry of Three-Forms on Symplectic Six-Manifolds}
\author{Teng Fei}

\date{}

\maketitle{}

\begin{abstract}
In this paper, we investigate the geometries associated with 3-forms of various orbital types on a symplectic 6-manifold. We show that there are extremely rich geometric structures attached to certain unstable 3-forms arising naturally from degeneration of Calabi-Yau structures, which in turn provides us a new perspective towards the SYZ conjecture. We give concrete examples and demonstrate that the limiting behavior of the Type IIA flow can be used to detect canonical geometric structures on symplectic manifolds. 
\end{abstract}

\section{Introduction}

The classification 3-forms in dimension six has been known for a long time \cite{reichel1907, gurevich1935b, gurevich1964, chan1998, bryant2006b}. A distinguished feature of 3-forms in dimension six is that there are two open (a.k.a. stable) orbits under the natural general linear group action. This fact has been utilized by many authors, such as Hitchin \cite{hitchin2000} and Donaldson-Lehmann \cite{donaldson2024,donaldson2024b,donaldson2024c}, to study various geometric and deformation problems in complex and Calabi-Yau geometry. 

In this paper, we systematically investigate various geometric structures associated with 3-forms on a 6-dimensional manifold, including those 3-forms from unstable orbits under the general linear group action. We introduce several natural integrability conditions on 3-forms and study the relationships among them. We examine in detail the geometry of certain unstable 3-forms arising naturally from degeneration of Calabi-Yau structures. In particular, we show that such manifolds possess a canonical Lagrangian foliation as a result of degeneration of complex structures. Moreover, we prove that the leaves of this foliation naturally admit a pair of dual Hessian structures, where the local potentials of the metric automatically solve a real Monge-Amp\`ere equation, hence demonstrating a close relationship with the Strominger-Yau-Zaslow conjecture \cite{strominger1996}. As explicit examples, we study special 3-forms on certain nilmanifolds and solvmanifolds, where we also apply the Type IIA flow to detect canonical geometric structures. 

This paper is organized as follows. In Section 2, we recall the classification of 3-forms in dimension 6 under both the general linear group and the symplectic group action. We introduce certain homogeneous equivariant polynomials to describe the orbits of 3-forms. Section 3 is devoted to integrability conditions on 3-forms. In particular, we introduce the notion of $F$-harmonic 3-forms as an ideal integrability condition. In Section 4, we study the geometry associated to an $F$-harmonic 3-form lying in the orbit $\clo_0^+$. Such 3-forms arises naturally from the degeneration of Calabi-Yau structures and they induce extremely rich geometric structures on the underlying symplectic manifolds, including a canonical Lagrangian foliation with leaves equipped with dual Hessian structures with an invariant volume form. In Section 5 we work with certain symplectic nilmanifolds and solvmanifolds and examine the space of special 3-forms on them. We also investigate the limiting behavior of the Type IIA flow of 3-forms and use it as a tool to detect canonical structures on such manifolds.\\

\noindent \textbf {Acknowledgment: }The author would like to thank Robert Bryant, Tristan Collins, Duong H. Phong, Li-Sheng Tseng and Xiangwen Zhang for valuable discussions. This work is generously supported by the Simons Collaboration Grant 830816.  

\section{Classification of three-forms}

Let $V$ be a real vector space of dimension $6$. We denote by $\bigwedge^3V^*$ the space of 3-form on $V$. The group $\mathrm{GL}(V)$ naturally acts on $\bigwedge^3V^*$. This action is ``stable'' in the sense that it has open orbits. In fact, all orbits of this action have been classified classically, see for example \cite{reichel1907, gurevich1935b, gurevich1964, chan1998, bryant2006b}.

To better describe these orbits, we introduce the following definitions.
\begin{dfn}
A 3-form $\varphi\in\bigwedge^3V^*$ is called \emph{stable} if it lies in an open orbit under the $\mathrm{GL}(V)$-action.
\end{dfn}
For any 3-form $\varphi\in\bigwedge^3V^*$, its kernel $\ker\varphi$ is defined to be
\[\ker\varphi=\{v\in V:\iota_v\varphi=0\}.\]
\begin{dfn}
A 3-form $\varphi$ is called \emph{non-degenerate} if $\ker\varphi=\{0\}$.
\end{dfn}
As $\varphi$ induces a non-degenerate 3-form $\bar\varphi\in\bigwedge^3(V/\ker\varphi)^*$, it is easy to see that the only possible dimensions for $\ker\varphi$ are $0$, $1$, $3$, and $6$.

For later use, we introduce several equivariant homogeneous polynomials on the space $\bigwedge^3V^*$:
\begin{enumerate}
\item $K$ is an equivariant homogeneous degree 2 polynomial of $\varphi$ such that $K(\varphi)\in\endo~V\otimes\bigwedge^6V^*$ for any $\varphi$. It is  defined by 
    \[K(\varphi)(v)=-\iota_v\varphi\wedge\varphi\in\bigwedge\nolimits^5V^*\cong V\otimes\bigwedge\nolimits^6V^*\] for any $v\in V$.
\item $F$ is an equivariant homogeneous degree 3 polynomial of $\varphi$ such that $F(\varphi)\in\bigwedge^3V^*\otimes\bigwedge^6V^*$ for any $\varphi$. It is defined by
    \[F(\varphi)(v_1,v_2,v_3)=-2\varphi(K(\varphi)(v_1),v_2,v_3)\] for any $v_1,v_2,v_3\in V$. Modulo tensoring the one dimensional space $\bigwedge^6V^*$, $F(\varphi)$ is also a 3-form. Therefore it makes sense to talk about $\ker F(\varphi)$. By this definition, we know that
    \[\ker F(\varphi)=K(\varphi)^{-1}(\ker\varphi\otimes\bigwedge\nolimits^6V^*).\]
\item $Q$ is an equivariant homogeneous degree 4 polynomial of $\varphi$ such that $Q(\varphi)\in(\bigwedge^6V^*)^{\otimes 2}$ for any $\varphi$. It is defined by
    \[Q(\varphi)=-\varphi\wedge F(\varphi).\]
\end{enumerate}
\begin{dfn}
An element of $(\bigwedge^6V^*)^{\otimes 2}$ is called \emph{positive} if it can be written as $\tau^2$ for $0\neq \tau\in\bigwedge^6V^*$. An element of $(\bigwedge^6V^*)^{\otimes 2}$ is called \emph{negative} if its negative is positive.
\end{dfn}

It has been known for a long time that the $\mathrm{GL}(V)$-action on $\bigwedge^3V^*$ has 6 distinct orbits. Two of them are stable, which are exactly the open sets defined by $Q(\varphi)<0$ and $Q(\varphi)>0$ respectively. These two open orbits will be denoted by $\clo_-$ and $\clo_+$ throughout this paper. The union of the other 4 orbits is the hypersurface $Q(\varphi)=0$. In fact, these 4 orbits can be distinguished by the dimension of their kernels. As a consequence, we shall denote these orbits by $\clo_0$, $\clo_1$, $\clo_3$, $\clo_6=\{0\}$, where the subscript labels the dimension of their kernel spaces.

Regarding these orbits, we have the following table:
\[\begin{tabular}{c|c|l}
\hline
Orbit & Dimesion & Normal Form \\
\hline
$\clo_-$ & 20 & $e^{135}-e^{146}-e^{236}-e^{245}$ \\
\hline
$\clo_+$ & 20 & $e^{123}+e^{456}$ \\
\hline
$\clo_0$ & 19 & $e^{146}+e^{236}+e^{245}$ \\
\hline
$\clo_1$ & 15 & $e^{135}+e^{245}$ \\
\hline
$\clo_3$ & 10 & $e^{135}$ \\
\hline
$\clo_6=\{0\}$ & 0 & 0 \\
\hline
\end{tabular}\]

\begin{dfn}
For $\varphi\in\bigwedge^3V^*$, the space $\ann~\varphi$ is defined to be
\[\ann~\varphi:=\{\alpha\in V^*:\alpha\wedge\varphi=0\}.\]
Dually we define
\[(\ann~\varphi)^\perp:=\{v\in V:\alpha(v)=0\textrm{ for any }\alpha\in\ann~\varphi\}.\]
\end{dfn}
Noticing that $K(\varphi)$ is a linear map from $V$ to $V\otimes\bigwedge^6V^*$ and that $V\cong V\otimes\bigwedge^6V^*$ canonically up to scaling, we may view both $\ker K(\varphi)$ and $\img~K(\varphi)$ as subspaces of $V$. Therefore associated with each $\varphi\in\bigwedge^3V^*$, there are four natural subspaces of $V$: $\ker\varphi$, $\ker K(\varphi)$, $\img~K(\varphi)$, $(\ann~\varphi)^\perp$ and one subspace of $V^*$: $\ann~\varphi$. Regarding dimensions of these vector spaces, it is easy to obtain the following table using the normal forms above.
\[\begin{tabular}{|c|c|c|c|c|}
\hline
Orbit & $\dim\ker\varphi$ & $\dim\ker K(\varphi)$ & $\dim\img~K(\varphi)$ & $\dim~(\ann~\varphi)^\perp$ \\
\hline
$\clo_-$ & 0 & 0 & 6 & 6 \\
\hline
$\clo_+$ & 0 & 0 & 6 & 6 \\
\hline
$\clo_0$ & 0 & 3 & 3 & 6 \\
\hline
$\clo_1$ & 1 & 5 & 1 & 5 \\
\hline
$\clo_3$ & 3 & 6 & 0 & 3 \\
\hline
$\clo_6=\{0\}$ & 6 & 6 & 0 & 0 \\
\hline
\end{tabular}\]
Moreover, one can check case by case that if any two spaces in the same line has the same dimension, then they are equal. Incorporating the fact that $F(\varphi)=0$ if and only if $\img~K(\varphi)\subset\ker\varphi$, we get the following stratification result:
\begin{prop}
The zeroes of the polynomials $K$, $F$, and $Q$ can be characterized as follows:
\begin{enumerate}
\item $\{\varphi:\varphi=0\}=\clo_6$,
\item $\{\varphi:K(\varphi)=0\}=\clo_3\coprod\clo_6$,
\item $\{\varphi:F(\varphi)=0\}=\clo_1\coprod\clo_3\coprod\clo_6$,
\item $\{\varphi:Q(\varphi)=0\}=\clo_0\coprod\clo_1\coprod\clo_3\coprod\clo_6$.
\end{enumerate}
\end{prop}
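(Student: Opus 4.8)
The plan is to prove this proposition by combining the dimension tables already established with the explicit normal forms, exploiting the $\mathrm{GL}(V)$-equivariance of $K$, $F$, and $Q$ to reduce every computation to a single representative in each orbit. Since $K$, $F$, $Q$ are equivariant (valued in one-dimensional twists of tensor spaces), the vanishing locus of each is a union of $\mathrm{GL}(V)$-orbits; as there are only six orbits and we have their normal forms, it suffices to evaluate each polynomial on each of the six normal forms. Part (a) is immediate. For part (b), I would observe directly from the table that $\dim\img\,K(\varphi)=0$ precisely for $\clo_3$ and $\clo_6$, which by definition of $K$ means $K(\varphi)=0$ exactly on those two orbits; alternatively one checks that $\iota_v\varphi\wedge\varphi=0$ for all $v$ on the normal form $e^{135}$ but not on $e^{135}+e^{245}$.

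\medskip

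For part (c), the key input is the stated equivalence ``$F(\varphi)=0$ if and only if $\img\,K(\varphi)\subset\ker\varphi$,'' which follows straight from the definition $F(\varphi)(v_1,v_2,v_3)=-2\varphi(K(\varphi)(v_1),v_2,v_3)$. So I would read off from the dimension table where this inclusion holds: on $\clo_-$ and $\clo_+$, $\ker\varphi=\{0\}$ while $\img\,K(\varphi)=V$, so the inclusion fails; on $\clo_0$, $\dim\ker\varphi=0$ but $\dim\img\,K(\varphi)=3$, so it fails again; on $\clo_1$, $\dim\ker\varphi=\dim\img\,K(\varphi)=1$, and by the ``same dimension $\Rightarrow$ equal'' remark these two lines coincide, so $\img\,K(\varphi)=\ker\varphi$ and $F$ vanishes; on $\clo_3$ and $\clo_6$ we have $K(\varphi)=0$ hence $F(\varphi)=0$ trivially. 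This gives $\{F=0\}=\clo_1\amalg\clo_3\amalg\clo_6$. For part (d), since $Q(\varphi)=-\varphi\wedge F(\varphi)$, the vanishing of $F$ forces the vanishing of $Q$, so $\clo_1\amalg\clo_3\amalg\clo_6\subset\{Q=0\}$; it remains to check $Q$ vanishes on $\clo_0$ and is nonzero on $\clo_\pm$. The former is a one-line computation on $e^{146}+e^{236}+e^{245}$ (one sees $F(\varphi)$ is proportional to $\varphi$ up to a twist, and $\varphi\wedge\varphi=0$ since $\varphi$ is ``decomposable enough''), and the nonvanishing on $\clo_\pm$ is exactly the defining property $Q<0$ on $\clo_-$ and $Q>0$ on $\clo_+$ recalled in the text.

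\medskip

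**The main obstacle** I anticipate is not conceptual but bookkeeping: one must be careful that the equivariance argument genuinely reduces the problem to normal forms, i.e. that $K$, $F$, $Q$ transform by characters of $\mathrm{GL}(V)$ (through powers of $\det$) so that their zero sets are honestly orbit-invariant — this is where the precise twisting by $\bigwedge^6 V^*$ matters. Once that is granted, everything else is a finite check against the two dimension tables plus the three short wedge computations on the normal forms $e^{135}$, $e^{135}+e^{245}$, and $e^{146}+e^{236}+e^{245}$. I would present the proof as: (i) note equivariance $\Rightarrow$ zero sets are unions of orbits; (ii) invoke the two tables for (a), (b) and the bulk of (c), (d); (iii) dispatch the residual cases $\clo_0$ under $F$ and under $Q$, and $\clo_\pm$ under $Q$, by direct evaluation, citing the recalled sign property of $Q$ on the stable orbits.
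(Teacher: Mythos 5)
Your proposal is correct and follows essentially the same route as the paper, which likewise reads parts (a)--(c) off the dimension table together with the remarks that equal-dimensional spaces in a given line coincide and that $F(\varphi)=0$ exactly when $\img~K(\varphi)\subset\ker\varphi$, and settles (d) from the defining sign of $Q$ on the open orbits plus its vanishing on the degenerate ones. One small correction to your parenthetical in part (d): on $\clo_0$ the form $F(\varphi)$ is \emph{not} proportional to $\varphi$ --- indeed $F$ carries $\clo_0$ into $\clo_3\otimes(\bigwedge^6V^*)^{\otimes 2}$, as the paper records in its proposition on the map $F$ --- but the intended one-line check still works: for the normal form $\varphi=e^{146}+e^{236}+e^{245}$ one finds $F(\varphi)$ is a multiple of $e^{246}$ (up to the $\bigwedge^6V^*$ twist), and since every monomial of $\varphi$ contains two of $e^2,e^4,e^6$, the product $\varphi\wedge F(\varphi)$, hence $Q(\varphi)$, vanishes.
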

In addition, the polynomials $K$, $F$ and $Q$ satisfy the following equations:
\begin{prop}~
\begin{enumerate}
\item $K(\varphi)\circ K(\varphi)=\dfrac{\id_V}{4}\cdot Q(\varphi)\in\endo~V\otimes(\bigwedge^6V^*)^{\otimes2}$,
\item $K(F(\varphi))=-K(\varphi)\cdot Q(\varphi)\in\endo~V\otimes(\bigwedge^6V^*)^{\otimes3}$,
\item $F(F(\varphi))=-\varphi\cdot Q^2(\varphi)\in\bigwedge^3V^*\otimes(\bigwedge^6V^*)^{\otimes4}$.
\end{enumerate}
\end{prop}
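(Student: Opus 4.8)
The plan is to regard each of the three identities as an equality between two $\Gl(V)$-equivariant polynomial maps on $\bigwedge^3V^*$. For this to make sense we first extend $K$ and $F$ to $\bigwedge^3V^*\otimes\bigwedge^6V^*$ by letting them act on the one-dimensional $\bigwedge^6V^*$-factor according to their degrees, i.e.\ $K(\psi\otimes\mu)=K(\psi)\otimes\mu^{\otimes2}$ and $F(\psi\otimes\mu)=F(\psi)\otimes\mu^{\otimes3}$ (these are well defined by the homogeneity of $K$ and $F$), so that $K\circ K$, $K\circ F$ and $F\circ F$ are defined and every term occurring in the three identities is a homogeneous polynomial map of $\varphi$ with values in the indicated space. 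Since $Q$ is a nonzero polynomial (for instance $Q(e^{123}+e^{456})\neq0$ by the computation below), its zero set has empty interior, so $\clo_-\coprod\clo_+=\{\varphi:Q(\varphi)\neq0\}$ is open and dense in $\bigwedge^3V^*$ and it suffices to prove the identities there. As both sides of each identity are $\Gl(V)$-equivariant and $\clo_+$, $\clo_-$ are single $\Gl(V)$-orbits, it is then enough to check each identity at one point of $\clo_+$ and one point of $\clo_-$, namely at the two normal forms $e^{123}+e^{456}$ and $e^{135}-e^{146}-e^{236}-e^{245}$ listed in the table above.

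The first step is the (routine but careful) computation at $\varphi=e^{123}+e^{456}$. Writing $\omega=e^{123456}$ and using the canonical isomorphism $V\otimes\bigwedge^6V^*\to\bigwedge^5V^*$, $v\otimes\mu\mapsto\iota_v\mu$, one finds $K(\varphi)=\diag(-1,-1,-1,1,1,1)\otimes\omega$ (so indeed $\varphi\in\clo_+$), then $F(\varphi)=2(e^{123}-e^{456})\otimes\omega$ and $Q(\varphi)=4\,\omega\otimes\omega$. The first identity $K(\varphi)\circ K(\varphi)=\id_V\otimes\omega^{\otimes2}=\tfrac14\id_V\cdot Q(\varphi)$ is immediate, and since $K\bigl(2(e^{123}-e^{456})\bigr)=\diag(4,4,4,-4,-4,-4)\otimes\omega$ we obtain $K(F(\varphi))=\diag(4,4,4,-4,-4,-4)\otimes\omega^{\otimes3}=-K(\varphi)\cdot Q(\varphi)$, which is the second identity. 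The second step is the analogous computation at $\varphi=e^{135}-e^{146}-e^{236}-e^{245}=\re\Omega$, where $\Omega=(e^1+ie^2)\wedge(e^3+ie^4)\wedge(e^5+ie^6)$; one computes $K(\varphi)=2\,J\otimes\omega$ for the standard complex structure $J$ (with $Je_{2k-1}=e_{2k}$), whence $F(\varphi)=4\,(\im\Omega)\otimes\omega$ and $Q(\varphi)=-16\,\omega\otimes\omega<0$, and the first two identities follow at once from $J^2=-\id_V$.

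The third identity then requires no further computation, since it is a formal consequence of the first two and the definition of $F$. Applying the definition of $F$ to the $\bigwedge^6V^*$-valued $3$-form $F(\varphi)$ and substituting $K(F(\varphi))=-K(\varphi)\cdot Q(\varphi)$ from the second identity,
\[
F(F(\varphi))(v_1,v_2,v_3)=-2\,F(\varphi)\bigl(K(F(\varphi))(v_1),v_2,v_3\bigr)=2\,Q(\varphi)\,F(\varphi)\bigl(K(\varphi)(v_1),v_2,v_3\bigr),
\]
and applying the definition of $F$ to $\varphi$ once more and then the first identity $K(\varphi)^2=\tfrac14\id_V\cdot Q(\varphi)$,
\[
F(\varphi)\bigl(K(\varphi)(v_1),v_2,v_3\bigr)=-2\,\varphi\bigl(K(\varphi)^2(v_1),v_2,v_3\bigr)=-\tfrac12\,Q(\varphi)\,\varphi(v_1,v_2,v_3),
\]
so that $F(F(\varphi))=-Q^2(\varphi)\,\varphi=-\varphi\cdot Q^2(\varphi)$.

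The only real work is thus in the two normal-form computations, and I expect the main nuisance there to be bookkeeping: keeping accurate track of the various copies of the one-dimensional space $\bigwedge^6V^*$ and of the signs produced by the identification $\bigwedge^5V^*\cong V\otimes\bigwedge^6V^*$. This can be made conceptual via Hitchin's structure theory on the open orbits: on $\clo_+$ (resp.\ $\clo_-$) the endomorphism $K(\varphi)$, after dividing by $\sqrt{Q(\varphi)}$ (resp.\ $\sqrt{-Q(\varphi)}$), is precisely the paracomplex (resp.\ almost complex) structure canonically associated with a stable $3$-form, for which $K^2=\pm\id$ up to scale is structural, and $F(\varphi)$ is, up to a power of $Q(\varphi)$, the Hitchin companion form $\hat\varphi$; with this dictionary the identities become statements about $J$ and $\Omega$ that can be cited rather than recomputed.
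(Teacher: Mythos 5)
Your proposal is correct and takes essentially the same route as the paper: the identities are polynomial in $\varphi$, so it suffices to verify them on an open subset of $\bigwedge^3V^*$, which the paper does on $\clo_-$ via Hitchin's relations and you do via equivariance at the normal forms of the open orbits (your check on $\clo_+$ is in fact redundant, since $\clo_-$ alone is already open). Your formal derivation of the third identity from the first two and the definition of $F$ is a tidy bonus, but the underlying argument is the same.
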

\begin{proof}
As these identities are algebraic in nature, we only need to verify them on some open subset of $\bigwedge^3V^*$ such as $\clo_-$, where we can apply Hitchin \cite{hitchin2000}'s result below.
\end{proof}

Most work in the literature focuses on the orbit $\clo_-$ as it is related to complex and K\"ahler geometry. As explained in the pioneering work of Hitchin \cite{hitchin2000}, for any $\varphi\in\clo_-$, we have $K(\varphi)\circ K(\varphi)=\id_V\cdot\lambda(\varphi)$ for some $0>\lambda(\varphi)\in(\bigwedge^6V^*)^{\otimes2}$. Let $\sqrt{-\lambda(\varphi)}\in\bigwedge^6V^*$ be either of the two square roots of $-\lambda(\varphi)$, we have that
\[J(\varphi)=\frac{K(\varphi)}{\sqrt{-\lambda(\varphi)}}:V\to V\]
defines a complex structure on $V$, such that the complex valued 3-form $\varphi+\sqrt{-1}\hat\varphi$, where $\hat\varphi=J(\varphi)^*\varphi$, is a nowhere vanishing $(3,0)$-form with respect to $J(\varphi)$. In this setting, we have
\bea\label{relations}
\begin{split}K(\varphi)&=J(\varphi)\cdot\sqrt{-\lambda(\varphi)},\\
F(\varphi)&=2\hat\varphi\cdot\sqrt{-\lambda(\varphi)},\\
Q(\varphi)&=4\lambda(\varphi).\end{split}
\eea
There is a parallel story about the orbit $\clo_+$, which is a ``para-complex'' or ``split'' version of Hitchin's theory, see for example \cite{etayo2006, fei2015b, hamilton2023}.

The cubic polynomial map $F:\bigwedge^3V^*\to\bigwedge^3V^*\otimes(\bigwedge^6V^*)^{\otimes 2}$ is $\mathrm{GL}(V)$-equivariant, therefore it maps orbits to orbits. It is not hard to deduce the following statement.
\begin{prop}\label{Fmap}
The cubic polynomial $F$ satisfies the following:
\begin{enumerate}
\item $F:\clo_\pm\to\clo_{\pm}\otimes(\bigwedge^6V^*)^{\otimes 2}$ is bijective.
\item $F(\clo_0)=\clo_3\otimes(\bigwedge^6V^*)^{\otimes 2}$.
\item $F=0$ on degenerate orbits $\clo_1$, $\clo_3$ and $\clo_6=\{0\}$.
\end{enumerate}
\end{prop}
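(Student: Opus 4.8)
The plan is to read off everything from the two propositions already established — the stratification of zero loci of $K$, $F$, $Q$ and the quadratic/cubic identities satisfied by them — so that the argument is almost entirely formal, using only $\mathrm{GL}(V)$-equivariance to reduce to one representative per orbit.

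\begin{proof}
All three assertions are $\mathrm{GL}(V)$-equivariant statements, so it suffices to check them on one representative of each orbit, and in fact the cleanest route is to use the algebraic identities of the previous proposition rather than the explicit normal forms.

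\emph{(1).} Suppose $\varphi\in\clo_\pm$, so that $Q(\varphi)$ is a nonzero element of $(\bigwedge^6V^*)^{\otimes2}$ (negative on $\clo_-$, positive on $\clo_+$). Since $F$ is $\mathrm{GL}(V)$-equivariant it maps the orbit $\clo_\pm$ into a single orbit tensored with $(\bigwedge^6V^*)^{\otimes2}$; by the stratification in the preceding proposition, $F(\varphi)\neq0$ rules out $\clo_1,\clo_3,\clo_6$ as targets, so $F(\varphi)\in\clo_+\otimes(\bigwedge^6V^*)^{\otimes2}$ or $\clo_-\otimes(\bigwedge^6V^*)^{\otimes2}$. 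To see $F$ preserves the sign (hence the orbit), compute $Q$ of $F(\varphi)$: by the homogeneity degrees, $Q(F(\varphi))=Q(\varphi)^3\cdot Q(\varphi_0)$ where $\varphi_0=F(\varphi)/Q(\varphi)$ lies in the same $\mathrm{GL}(V)$-orbit as $\varphi$ up to the $(\bigwedge^6V^*)$-factor; more directly, $Q(F(\varphi))=-F(\varphi)\wedge F(F(\varphi))=-F(\varphi)\wedge(-\varphi\cdot Q^2(\varphi))=\varphi\wedge F(\varphi)\cdot Q^2(\varphi)=-Q(\varphi)\cdot Q^2(\varphi)=-Q(\varphi)^3$. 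Thus $Q(F(\varphi))$ and $Q(\varphi)$ have opposite sign times a square $Q(\varphi)^2$ — so actually $Q(F(\varphi))=-Q(\varphi)^3$ has the \emph{same} sign as $-Q(\varphi)$; wait, with $Q(\varphi)<0$ we get $Q(\varphi)^3<0$ so $-Q(\varphi)^3>0$, landing in $\clo_+$, and symmetrically $Q(\varphi)>0$ gives $F(\varphi)\in\clo_-$. Hence $F$ interchanges the description but in either case $F(\varphi)$ is stable; that $F:\clo_\pm\to\clo_\mp\otimes(\bigwedge^6V^*)^{\otimes2}$ (or $\clo_\pm$, depending on the sign convention one fixes) is a bijection then follows because the identity $F(F(\varphi))=-\varphi\cdot Q^2(\varphi)$ with $Q(\varphi)\neq0$ exhibits an explicit inverse up to a nonzero scalar in $(\bigwedge^6V^*)^{\otimes4}$.

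\emph{(2).} For $\varphi\in\clo_0$ we have $Q(\varphi)=0$ but (by the dimension table) $\img K(\varphi)\not\subset\ker\varphi$, so $F(\varphi)\neq0$; hence $F(\varphi)$ lies in $(\clo_0\coprod\clo_1\coprod\clo_3)\otimes(\bigwedge^6V^*)^{\otimes2}$ by the stratification for $Q$. Using $K(F(\varphi))=-K(\varphi)\cdot Q(\varphi)=0$ on $\clo_0$, we learn $F(\varphi)$ lies in the zero locus of $K$, which is $\clo_3\coprod\clo_6$; combined with $F(\varphi)\neq0$ this forces $F(\varphi)\in\clo_3\otimes(\bigwedge^6V^*)^{\otimes2}$. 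Surjectivity onto all of $\clo_3\otimes(\bigwedge^6V^*)^{\otimes2}$ follows from $\mathrm{GL}(V)$-equivariance together with checking on the normal form $e^{146}+e^{236}+e^{245}$ that $F$ does not vanish there.

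\emph{(3).} This is immediate from the previous proposition: $\{\varphi:F(\varphi)=0\}=\clo_1\coprod\clo_3\coprod\clo_6$.

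The only genuine point requiring care is bookkeeping the sign in part (1) — i.e.\ tracking whether $F$ preserves or swaps $\clo_+$ and $\clo_-$ — which is settled by the computation $Q(F(\varphi))=-Q(\varphi)^3$ above; everything else is a formal consequence of the stratification and the identity $F\circ F=-\id\cdot Q^2$.
\end{proof}

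The main obstacle I anticipate is precisely the sign/orientation bookkeeping in part (1): one must be careful that $K(\varphi)$ is only defined up to the choice of nowhere-zero element of $\bigwedge^6V^*$, so statements like ``$F(\varphi)\in\clo_+\otimes(\bigwedge^6V^*)^{\otimes2}$'' need the positivity notion from the relevant definition, and the identity $F(F(\varphi))=-\varphi\cdot Q^2(\varphi)$ must be invoked with the correct power of $(\bigwedge^6V^*)^*$ to produce an honest inverse; the surjectivity in (2) also formally needs one explicit non-vanishing check on the $\clo_0$ normal form, which is a short computation I would relegate to the reader.
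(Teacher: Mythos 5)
Your parts (2) and (3) are correct, and part (2) is in fact a nice formal argument: from $K(F(\varphi))=-K(\varphi)\cdot Q(\varphi)=0$ on $\clo_0$, the stratification $\{K=0\}=\clo_3\coprod\clo_6$, and $F\neq0$ on $\clo_0$, you conclude $F(\varphi)\in\clo_3\otimes(\bigwedge^6V^*)^{\otimes2}$ without touching the normal form; the paper instead relies on the explicit computation $F(\varphi)=4\,\ud y^1\wedge\ud y^2\wedge\ud y^3\otimes(\ud x^1\wedge\ud y^1\wedge\cdots)$ for the representative of $\clo_0$, together with Hitchin's relations (\ref{relations}) on the stable orbit and its para-complex analogue (the paper offers no written proof beyond this).

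The genuine problem is the sign computation at the heart of your part (1). Two $3$-forms anticommute under the wedge product, so $F(\varphi)\wedge\varphi=-\varphi\wedge F(\varphi)$; hence $Q(F(\varphi))=-F(\varphi)\wedge F(F(\varphi))=Q(\varphi)^2\,F(\varphi)\wedge\varphi=-Q(\varphi)^2\,\varphi\wedge F(\varphi)=+Q(\varphi)^3$, not $-Q(\varphi)^3$. (A check on the normal form $\varphi=e^{123}+e^{456}$ of $\clo_+$ gives $F(\varphi)=2(e^{123}-e^{456})\otimes e^{123456}$, again of $\clo_+$-type, and $Q(F(\varphi))=Q(\varphi)^3>0$.) Because $Q(F(\varphi))$ has the \emph{same} sign as $Q(\varphi)$, the map $F$ preserves each open orbit, exactly as the proposition asserts and as is consistent with (\ref{relations}), where $F(\varphi)=2\hat\varphi\sqrt{-\lambda(\varphi)}$ and $\hat\varphi$ is again the real part of a holomorphic volume form, hence in $\clo_-$. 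Your wrong sign leads you to assert that $F$ interchanges $\clo_+$ and $\clo_-$, which contradicts the very statement being proved, and the escape clause ``or $\clo_\pm$, depending on the sign convention one fixes'' does not repair this: the conventions are fixed by the paper's definitions of $K$, $F$, $Q$ and of positivity in $(\bigwedge^6V^*)^{\otimes2}$. With the corrected sign, the remainder of your part (1) — stability of $F(\varphi)$ from $Q(F(\varphi))\neq0$ and bijectivity from $F(F(\varphi))=-\varphi\cdot Q^2(\varphi)$ with $Q(\varphi)\neq0$ — does go through. One further small slip: ``$F(\varphi)\neq0$ rules out $\clo_1,\clo_3,\clo_6$ as targets'' is a non sequitur (nonvanishing only excludes $\clo_6$); it is harmless here only because the $Q(F(\varphi))\neq0$ computation is what actually places $F(\varphi)$ in a stable orbit.
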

It follows that the nontrivial part of $F$ is concentrated in the orbit $\clo_0$. Understanding the map $F:\clo_0\to\clo_3\otimes (\bigwedge^6V^*)^{\otimes 2}$ is equivalent to understanding the stabilizers of $\varphi\in\clo_0$ and $F(\varphi)\in\clo_3\otimes (\bigwedge^6V^*)^{\otimes 2}$ respectively.

For the simplicity of calculation, let us take $\varphi$ to be the normal form in our previous table. In addition, let us rename the vectors in $V^*$ by
\[\ud x^j=e^{2j-1},\quad \ud y^j=e^{2j},\quad\textrm{for }j=1,2,3.\]\
In this set-up, straightforward calculation yields
\[\begin{split}\varphi&=\ud x^1\wedge\ud y^2\wedge\ud y^3+\ud x^2\wedge\ud y^3\wedge\ud y^1+\ud x^3\wedge\ud y^1\wedge\ud y^2,\\
F(\varphi)&=4\ud y^1\wedge\ud y^2\wedge\ud y^3\otimes(\ud x^1\wedge\ud y^1\wedge\ud x^2\wedge\ud y^2\wedge\ud x^3\wedge\ud y^3).
\end{split}\]
Fix the basis $\{\ud x^1,\ud x^2,\ud x^3,\ud y^1,\ud y^2,\ud y^3\}$ of $V^*$ which we shall abbreviate as $\{\ud x,\ud y\}$. We have the following descriptions of the stabilizers of $F(\varphi)$ and $\varphi$.
\begin{prop}~
\begin{enumerate}
\item The stabilizer of $F(\varphi)$ consists of elements taking the matrix form
\[\begin{pmatrix}A & 0\\ B & C\end{pmatrix},\]
where $B$ is arbitrary, $A$ and $C$ are invertible $3\times3$ matrices satisfying $\det A\cdot\det^2C=1$.
\item The stabilizer of $\varphi$ is a subgroup of the stabilizer of $F(\varphi)$ whose elements further satisfy
\[A=\frac{C}{\det C},\textrm{ and }\tr(BC^{-1})=0.\]
\end{enumerate}
We also find that
\[K(\varphi)(\pt_{x^j})=0,\quad K(\varphi)(\pt_{y^j})=-2\pt_{x^j}\otimes(\ud x^1\wedge\ud y^1\wedge\ud x^2\wedge\ud y^2\wedge\ud x^3\wedge\ud y^3).\]
Consequently we have $\ker K(\varphi)=\img~K(\varphi)=\ker F(\varphi)=\spann\{\pt x\}.$
\end{prop}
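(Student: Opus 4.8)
The plan is to do everything explicitly in the fixed basis $\{\ud x,\ud y\}$ of $V^*$, first using the decomposable $3$-form factor of $F(\varphi)$ to force a block shape on any stabilizing element, and then imposing the defining equations directly. Write $F(\varphi)=\Omega\otimes\nu$ with $\Omega=4\,\ud y^1\wedge\ud y^2\wedge\ud y^3$ and $\nu=\ud x^1\wedge\ud y^1\wedge\ud x^2\wedge\ud y^2\wedge\ud x^3\wedge\ud y^3$. Because the two tensor slots live in different spaces, $g\cdot F(\varphi)=F(\varphi)$ forces $g\cdot\Omega=c\,\Omega$ and $g\cdot\nu=c^{-1}\nu$ for some scalar $c$; since $\Omega$ is decomposable this is equivalent to $g$ preserving $\spann\{\ud y^1,\ud y^2,\ud y^3\}=\{\alpha\in V^*:\alpha\wedge\Omega=0\}=\ann~F(\varphi)$. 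Hence, in the ordered basis $(\ud x^1,\ud x^2,\ud x^3,\ud y^1,\ud y^2,\ud y^3)$, every element of the stabilizer of $F(\varphi)$ — and, by the $\mathrm{GL}(V)$-equivariance of $F$, every element of the stabilizer of $\varphi$ — has the block form $g=\begin{pmatrix}A&0\\B&C\end{pmatrix}$ with $A,C$ invertible $3\times3$ blocks.

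Given this shape, the first statement is immediate: $g\cdot\Omega=4\,(g\,\ud y^1)\wedge(g\,\ud y^2)\wedge(g\,\ud y^3)=(\det C)\,\Omega$, while $g\cdot\nu=(\det g)\,\nu=(\det A)(\det C)\,\nu$ because $g$ is block triangular, so $g\cdot F(\varphi)=(\det A)(\det C)^2\,F(\varphi)$ and no further condition arises. Thus $g$ stabilizes $F(\varphi)$ exactly when $\det A\cdot\det^2C=1$, with $B$ arbitrary, and conversely every such $g$ clearly fixes $F(\varphi)$.

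For the second statement I would impose $g\cdot\varphi=\varphi$ on such a $g$. Write $\varphi=\sum_{j=1}^{3}\ud x^j\wedge\beta_j$ with $\beta_1=\ud y^2\wedge\ud y^3$, $\beta_2=\ud y^3\wedge\ud y^1$, $\beta_3=\ud y^1\wedge\ud y^2$, a basis of $\bigwedge^2\spann\{\ud y\}$; since $g$ preserves $\spann\{\ud y\}$ a short computation gives $g\cdot\beta_j=\sum_k(\mathrm{adj}\,C)_{jk}\,\beta_k$, where $\mathrm{adj}\,C=(\det C)\,C^{-1}$. Expanding $g\cdot\varphi=\sum_j(g\,\ud x^j)\wedge(g\,\beta_j)$ and collecting terms yields
\[
g\cdot\varphi=\sum_{i,k}(A\,\mathrm{adj}\,C)_{ik}\,\ud x^i\wedge\beta_k+\sum_{i,k}(B\,\mathrm{adj}\,C)_{ik}\,\ud y^i\wedge\beta_k .
\]
Now the nine forms $\ud x^i\wedge\beta_k$ are linearly independent, whereas $\ud y^i\wedge\beta_k=\delta_{ik}\,\ud y^1\wedge\ud y^2\wedge\ud y^3$; comparing with $\varphi=\sum_i\ud x^i\wedge\beta_i$ therefore gives exactly $A\,\mathrm{adj}\,C=I$ and $\tr(B\,\mathrm{adj}\,C)=0$, i.e. $A=C/\det C$ and $\tr(BC^{-1})=0$ (the determinant constraint of the first statement then being automatic). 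Reversing the computation shows these conditions are also sufficient.

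Finally, for the formulas for $K(\varphi)$ I would contract directly: $\iota_{\pt_{x^j}}\varphi=\beta_j\in\bigwedge^2\spann\{\ud y\}$, so $\iota_{\pt_{x^j}}\varphi\wedge\varphi=\beta_j\wedge\sum_l\ud x^l\wedge\beta_l$ has only terms $\pm\,\ud x^l\wedge\beta_j\wedge\beta_l$, which vanish since $\beta_j\wedge\beta_l\in\bigwedge^4\spann\{\ud y\}=0$; hence $K(\varphi)(\pt_{x^j})=0$. A direct contraction (say $\iota_{\pt_{y^1}}\varphi=\ud x^2\wedge\ud y^3-\ud x^3\wedge\ud y^2$, the other two by cyclic symmetry) followed by wedging with $\varphi$ gives $\iota_{\pt_{y^j}}\varphi\wedge\varphi=2\,\iota_{\pt_{x^j}}\nu$, so under $\bigwedge^5V^*\cong V\otimes\bigwedge^6V^*$ one reads off $K(\varphi)(\pt_{y^j})=-2\,\pt_{x^j}\otimes\nu$. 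Then $\ker K(\varphi)=\img~K(\varphi)=\spann\{\pt x\}$, and since $\ker\varphi=0$ for $\varphi\in\clo_0$ the identity $\ker F(\varphi)=K(\varphi)^{-1}(\ker\varphi\otimes\bigwedge^6V^*)=\ker K(\varphi)$ gives the last claim. There is no conceptual obstacle here; the one thing to watch is the sign and normalization bookkeeping — keeping the single $\bigwedge^6V^*$ slot of $F(\varphi)$ consistent so the constraint comes out as $\det A\cdot\det^2C=1$, getting the index placement in $g\cdot\beta_j$ right (it is what distinguishes $\tr(BC^{-1})$ from its transpose), and tracking the orientation sign between $\nu$ and $\ud x^1\wedge\ud x^2\wedge\ud x^3\wedge\ud y^1\wedge\ud y^2\wedge\ud y^3$, which is what produces the factor $-2$.
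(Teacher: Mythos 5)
Your proposal is correct: the block-triangular shape forced by the decomposable factor of $F(\varphi)$, the scalar factor $\det A\cdot\det^2C$, the conditions $A\,\mathrm{adj}\,C=I$ and $\tr(B\,\mathrm{adj}\,C)=0$ (equivalently $A=C/\det C$, $\tr(BC^{-1})=0$), and the contractions giving $K(\varphi)(\pt_{x^j})=0$, $K(\varphi)(\pt_{y^j})=-2\pt_{x^j}\otimes\nu$ all check out. The paper states this proposition without proof as a direct computation with the normal form, and your argument is exactly that verification, so nothing further is needed.
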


People are also interested in the situation that $V$ is equipped with a symplectic form $\omega\in\bigwedge^2V^*$, in which case we shall consider the action of the group $\mathrm{Sp}(V,\omega)$ on the space of 3-forms. The Lefschetz decomposition
\[\bigwedge\nolimits^3V^*=\bigwedge\nolimits^3_0V^*\oplus\omega\wedge V^*\]
breaks the space of 3-forms on $V$ into $\mathrm{Sp}(V,\omega)$-irreducible pieces, where $\bigwedge^3_0V^*$ denotes the space of $\omega$-primitive 3-forms which can be characterized by
\[\bigwedge\nolimits^3_0V^*=\{\varphi:\Lambda\varphi=0\}=\{\varphi:\omega\wedge\varphi=0\},\]
where $\Lambda$ denotes the Lefschetz operator of contraction with $\omega$.
 
Since any $\varphi\in\omega\wedge V^*$ is automatically a member of the orbit $\clo_1$, it is natural to only consider the $\mathrm{Sp}(V,\omega)$-orbits in $\bigwedge^3_0V^*$. In particular, for any $\mathrm{GL}(V)$-orbit $\clo$, we would like to know how $\clo\cap\bigwedge^3_0V^*$ breaks into distinct $\mathrm{Sp}(V,\omega)$-orbits. This problem has been studied by many authors such as \cite{lychagin1983, banos2003, bryant2006b}. We now know that $\clo_-\cap\bigwedge^3_0V^*$ splits into two families of $\mathrm{Sp}(V,\omega)$-orbits $\clo_-^\pm(\mu)$, each parametrized by a positive scaling factor $\mu$, and $\clo_+\cap\bigwedge^3_0V^*$ decomposes as a family of $\mathrm{Sp}(V,\omega)$-orbits $\clo_+(\mu)$ parametrized by a positive scalar $\mu$, and that $\clo_j\cap\bigwedge^3_0V^*$ breaks as the union of two $\mathrm{Sp}(V,\omega)$-orbits $\clo_j^\pm$ for $j=0,1$, and that $\clo_3\cap\bigwedge^3_0V^*$ and $\clo_6=\{0\}$ are single $\mathrm{Sp}(V,\omega)$-orbits. Taken from \cite{bryant2006b}, the normal forms of these orbits can be summarized as follows:
\[\begin{tabular}{|c|c|l|}
\hline
Orbit & Dimension & Normal Form \\
\hline
$\clo_-^\pm(\mu)$ & $13$ & $\mu(e^{135}-e^{146}\mp e^{236}\mp e^{245})$ \\
\hline
$\clo_+(\mu)$ & $13$ & $\mu(e^{135}+e^{246})$ \\
\hline
$\clo_0^\pm$ & $13$ & $e^{146}\pm e^{236}\pm e^{245}$ \\
\hline
$\clo_1^\pm$ & $10$ & $(e^{13}\mp e^{24})\wedge e^5$ \\
\hline
$\clo_3\cap\bigwedge^3_0V^*$ & $7$ & $e^{135}$ \\
\hline
$\clo_6=\{0\}$ & 0 & $0$\\
\hline
\end{tabular}\]
In all these cases,  we have $\omega=e^{12}+e^{34}+e^{56}$.

An immediate consequence is the following important observation:
\begin{prop}
For any $\varphi\in\clo_0^-\coprod\clo_0^+=\clo_0\cap\bigwedge^3_0V^*$, we have that $\ker K(\varphi)=\img~K(\varphi)=\ker F(\varphi)$ is a Lagrangian subspace of $(V,\omega)$ such that both the restrictions of $\varphi$ and $F(\varphi)$ on it vanish. 
\end{prop}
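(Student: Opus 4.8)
The plan is to build the statement up one piece at a time, handling everything except the Lagrangian property by $\mathrm{GL}(V)$-equivariance, and reserving the one genuinely symplectic input for the two $\mathrm{Sp}(V,\omega)$-normal forms. First I would observe that since $K$ and $F$ are $\mathrm{GL}(V)$-equivariant polynomial maps, the chain of equalities $\ker K(\varphi)=\img K(\varphi)=\ker F(\varphi)$ and the fact that this subspace has dimension $3$ — both verified on the $\clo_0$-normal form in the preceding proposition — hold for every $\varphi\in\clo_0$, hence in particular for $\varphi\in\clo_0^+\coprod\clo_0^-$. Writing $W:=\ker K(\varphi)$, the vanishing $F(\varphi)|_W=0$ is then immediate from the defining formula $F(\varphi)(v_1,v_2,v_3)=-2\varphi(K(\varphi)(v_1),v_2,v_3)$, because $K(\varphi)(v_1)=0$ once $v_1\in W$. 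For $\varphi|_W=0$, I would note that the assertion ``$\varphi$ restricts to zero on $\ker K(\varphi)$'' is $\mathrm{GL}(V)$-invariant, so it suffices to check it on the $\clo_0$-normal form, where $W=\spann\{\pt_{x^1},\pt_{x^2},\pt_{x^3}\}$ and $\varphi=\ud x^1\wedge\ud y^2\wedge\ud y^3+\ud x^2\wedge\ud y^3\wedge\ud y^1+\ud x^3\wedge\ud y^1\wedge\ud y^2$ has exactly one $\ud x$-factor in each monomial, forcing $\varphi(\pt_{x^i},\pt_{x^j},\pt_{x^k})=0$. None of this uses primitivity of $\varphi$.

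What remains — and what is the main step — is to show that $W$ is Lagrangian, the only place where $\omega\wedge\varphi=0$ intervenes. Primitivity is $\mathrm{Sp}(V,\omega)$-invariant, and $\clo_0\cap\bigwedge^3_0V^*=\clo_0^+\coprod\clo_0^-$ consists of exactly two $\mathrm{Sp}(V,\omega)$-orbits, so by equivariance of $K$ it is enough to verify the claim on one representative of each. Taking $\varphi=e^{146}\pm e^{236}\pm e^{245}$ with $\omega=e^{12}+e^{34}+e^{56}$ and the dual basis $\{e_1,\dots,e_6\}$, I would compute $K(\varphi)(e_i)=-\iota_{e_i}\varphi\wedge\varphi$ directly and find $K(\varphi)(e_1)=K(\varphi)(e_3)=K(\varphi)(e_5)=0$ (each of the relevant wedge products repeats an index), while $K(\varphi)(e_2),K(\varphi)(e_4),K(\varphi)(e_6)$ come out as nonzero multiples of $e_1,e_3,e_5$ respectively. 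Hence $W=\spann\{e_1,e_3,e_5\}$, and since $\omega=e^{12}+e^{34}+e^{56}$ pairs $e_1$ with $e_2$, $e_3$ with $e_4$, and $e_5$ with $e_6$, we get $\omega|_W=0$; as $\dim W=3=\frac{1}{2}\dim V$ this shows $W$ is Lagrangian. The $\clo_0^-$ representative is disposed of by the same computation up to signs, and $\mathrm{Sp}(V,\omega)$-equivariance then propagates the conclusion to all of $\clo_0^+\coprod\clo_0^-$.

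I do not expect a real obstacle here: after the reduction to normal forms the argument is a short finite computation, and the one point to keep straight is which conclusions are $\mathrm{GL}(V)$-invariant (the kernel/image identities and $\varphi|_W=0$, valid on all of $\clo_0$) versus which are genuinely $\mathrm{Sp}(V,\omega)$-specific (the Lagrangian property, which does fail for non-primitive members of $\clo_0$). If one wants a computation-free proof of the last point, one can instead try to show that for primitive $\varphi$ the symplectic isomorphism $V\to V^*$ carries $W=\ker F(\varphi)$ into $\ann F(\varphi)$, which coincides with the annihilator of $W$ in $V^*$, starting from the identity $(\iota_v\omega)\wedge\varphi=-\,\omega\wedge\iota_v\varphi$ (obtained by contracting $\omega\wedge\varphi=0$) together with the definition of $F$; a dimension count then forces equality, i.e.\ $W$ is Lagrangian. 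In practice, though, the normal-form computation above is the cleanest route.
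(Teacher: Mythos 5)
Your proposal is correct and follows essentially the same route as the paper: the paper presents this proposition as an immediate consequence of the preceding normal-form tables, which is exactly the verification you carry out (GL-equivariance for the kernel/image identities and the vanishing of the restrictions, then the $\mathrm{Sp}(V,\omega)$-normal forms $e^{146}\pm e^{236}\pm e^{245}$ with $\omega=e^{12}+e^{34}+e^{56}$ for the Lagrangian property). The only tiny quibble is your parenthetical that the Lagrangian property ``does fail'' for non-primitive members of $\clo_0$ — it \emph{can} fail, but need not for every non-primitive representative; this aside plays no role in the argument.
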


Under the presence of the symplectic form $\omega$, the line $\bigwedge^6V^*$ is trivialized by $\dfrac{\omega^3}{3!}$, therefore we may canonically identify $K(\varphi)$, $F(\varphi)$ and $Q(\varphi)$ as elements in $\endo~V$, $\bigwedge_0^3V^*$ and $\rr$ respectively for any primitive 3-form $\varphi$. Moreover, we have the following statement parallel to Proposition \ref{Fmap}.
\begin{prop}
The homogeneous cubic polynomial $F:\bigwedge_0^3V^*\to\bigwedge_0^3V^*$ satisfies
\begin{enumerate}
\item $F:\clo_-^\pm(\mu)\to\clo_-^\pm(4\mu^3)$ is bijective.
\item $F:\clo_+(\mu)\to\clo_+(2\mu^3)$ is bijective.
\item $F:\clo_0^\pm\to\clo_3\cap\bigwedge^3_0V^*$ is surjective.
\item $F\equiv0$ on $\clo_1^\pm$, $\clo_3\cap\bigwedge^3_0V^*$, and $\clo_6=\{0\}$.
\end{enumerate}
\end{prop}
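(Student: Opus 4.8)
The plan is to verify each claim by combining the general-linear results of Proposition \ref{Fmap} with the finer $\mathrm{Sp}(V,\omega)$-orbit data recorded in the two tables above. The key structural observation is that after trivializing $\bigwedge^6 V^*$ by $\omega^3/3!$, the map $F$ becomes a genuine $\mathrm{Sp}(V,\omega)$-equivariant cubic $\bigwedge^3_0 V^* \to \bigwedge^3 V^*$, and that its image actually lands in $\bigwedge^3_0 V^*$: indeed $\omega\wedge F(\varphi)$ is a degree-$(1,3)$ equivariant map $\bigwedge^3_0 V^* \to \bigwedge^5 V^* \cong V$, but there is no nonzero $\mathrm{Sp}(V,\omega)$-equivariant polynomial map of odd degree from an irreducible self-dual representation like $\bigwedge^3_0 V^*$ into the standard representation $V$ (one can also just check this on a single normal form), so $F(\varphi)$ is automatically primitive. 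With this in hand, $F$ restricts to a map between the spaces in the statement.

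Next I would pin down the parametrized orbits using the scaling weights. Since $F$ is homogeneous of degree $3$, $F(t\varphi) = t^3 F(\varphi)$; combined with Proposition \ref{Fmap}(1) this already forces $F(\clo_-^\pm(\mu))$ and $F(\clo_+(\mu))$ to lie in the one-parameter families $\clo_-^\pm(\cdot)$ and $\clo_+(\cdot)$ respectively, and reduces the problem to evaluating $F$ once on each normal form. From the relations \eqref{relations} on $\clo_-$, writing $\varphi = \mu(e^{135}-e^{146}\mp e^{236}\mp e^{245})$ and using $F(\varphi) = 2\hat\varphi\cdot\sqrt{-\lambda(\varphi)}$ with $\lambda$ homogeneous of degree $2$, one computes $\sqrt{-\lambda(\varphi)} = c\mu^2\,(\omega^3/3!)$ for the appropriate constant $c$ and reads off that $F$ sends $\clo_-^\pm(\mu)$ to the orbit with scaling factor $4\mu^3$; since $F$ preserves the $\pm$ label (the sign is the $\mathrm{Sp}$-invariant distinguishing the two families and $F$ is equivariant), this gives (1). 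The para-complex analogue of \eqref{relations} gives the factor $2$ in (2) by the same bookkeeping, the difference $4$ versus $2$ tracing back to the $4\lambda$ versus $2\lambda$ in the two versions of Hitchin's identity. For (4), everything is immediate from Proposition \ref{Fmap}(3): $F$ vanishes on $\clo_1$, $\clo_3$, $\clo_6$ already at the $\mathrm{GL}$ level, so a fortiori on their primitive sub-orbits.

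The one genuinely new point is (3), surjectivity of $F:\clo_0^\pm \to \clo_3\cap\bigwedge^3_0 V^*$. By Proposition \ref{Fmap}(2), $F(\clo_0) = \clo_3\otimes(\bigwedge^6 V^*)^{\otimes 2}$, so $F(\varphi) \in \clo_3$ for every $\varphi \in \clo_0^\pm$, and it is primitive by the first paragraph; hence the image is contained in $\clo_3\cap\bigwedge^3_0 V^*$, which is a single $\mathrm{Sp}(V,\omega)$-orbit of dimension $7$. Conversely, $\clo_0^\pm$ is an $\mathrm{Sp}(V,\omega)$-orbit of dimension $13$ and $F$ is equivariant, so $F(\clo_0^\pm)$ is a union of $\mathrm{Sp}(V,\omega)$-orbits inside $\bigwedge^3_0 V^*$; since $F(\varphi)\neq 0$ for $\varphi\in\clo_0^\pm$ (as $\clo_0$ is disjoint from the vanishing locus $\{F=0\}=\clo_1\coprod\clo_3\coprod\clo_6$ of Proposition 2.?), the image is a nonempty union of orbits contained in the single orbit $\clo_3\cap\bigwedge^3_0 V^*$, hence equals it. I would make this concrete by exhibiting, for the normal form $\varphi = e^{146}\pm e^{236}\pm e^{245}$, the value $F(\varphi) = \text{const}\cdot e^{\cdots}$ proportional to the normal form $e^{135}$ of $\clo_3\cap\bigwedge^3_0 V^*$; surjectivity then follows from equivariance since $\mathrm{Sp}(V,\omega)$ acts transitively on $\clo_3\cap\bigwedge^3_0 V^*$.

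The main obstacle is purely computational rather than conceptual: one must carefully evaluate the degree-three polynomial $F$, via the intermediate $K$, on the symplectic normal forms, keeping track of the trivialization of $\bigwedge^6 V^*$ by $\omega^3/3!$ so that the scaling constants $4\mu^3$ and $2\mu^3$ come out correctly and the $\pm$ labels are matched; the structural statements (image primitive, image a union of orbits inside the right $\mathrm{GL}$-stratum) are then formal consequences of equivariance together with Propositions \ref{Fmap} and the preceding stratification proposition.
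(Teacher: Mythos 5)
The paper offers no written proof of this proposition (it is stated as the symplectic parallel of Proposition \ref{Fmap}, to be checked by evaluating $F$ on the normal forms in the $\mathrm{Sp}(V,\omega)$-orbit table and using equivariance, homogeneity, and the relations (\ref{relations})), and your plan is exactly that verification, so in substance your route coincides with the intended one: parts (3) and (4) in particular are handled correctly, since the image of a single $\mathrm{Sp}$-orbit under the equivariant map $F$ is a single $\mathrm{Sp}$-orbit, and the computation $F(e^{146}+e^{236}+e^{245})=4\,e^{246}\otimes\frac{\omega^3}{3!}$ already done in Section 2 pins it down inside $\clo_3\cap\bigwedge^3_0V^*$.

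Two of your auxiliary justifications, however, do not stand as written and should be replaced by the computations you defer to. First, the representation-theoretic claim that there is no odd-degree $\mathrm{Sp}(V,\omega)$-equivariant polynomial map $\bigwedge^3_0V^*\to V$ has no parity argument behind it: the central element $-\id\in\mathrm{Sp}(V,\omega)$ acts by $-1$ both on 3-forms and on $V$, so a cubic equivariant map is perfectly consistent with the central character, and you have not excluded such a map abstractly. Primitivity of $F(\varphi)$ is in any case asserted in the paper just before the proposition, and is proved either by checking $\omega\wedge F(\varphi)=0$ on one normal form per orbit (one normal form alone only covers its own orbit), or by checking it on the open dense stable locus $\{Q\neq0\}$ and invoking polynomiality. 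Second, the parenthetical ``$F$ preserves the $\pm$ label because the sign is the $\mathrm{Sp}$-invariant distinguishing the families and $F$ is equivariant'' is circular: equivariance only guarantees that each orbit maps into some orbit, not which one; the label must be read off from the evaluation, e.g.\ on $\clo_-^\pm(\mu)$ one has $F(\varphi)=4\mu^2\hat\varphi$ and $\hat\varphi$ lies in the same family as $\varphi$ because $\Omega\mapsto e^{-\sqrt{-1}\,\theta}\Omega$ is realized by a unitary, hence symplectic, change of frame. Finally, you never actually argue bijectivity in (1)--(2): it needs the one-line remark that injectivity is inherited from the $\mathrm{GL}(V)$-level bijectivity of Proposition \ref{Fmap}(1), while surjectivity onto $\clo_-^\pm(4\mu^3)$, resp.\ $\clo_+(2\mu^3)$, follows from transitivity of $\mathrm{Sp}(V,\omega)$ on the target orbit, exactly as in your argument for (3). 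With those repairs the proposal is correct.
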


By using $\omega$, there are a few ways to construct a symmetric bilinear form that is quadratic in $\varphi$. It is not surprising that they all yield the same quadratic form as detailed in the following proposition.
\begin{prop}
There is a natural symmetric bilinear form $q(\omega,\varphi)$ on $V$ associated to the pair $(\omega,\varphi)$, which satisfies
\[q(\omega,\varphi)(v_1,v_2)=\omega(v_1,K(\varphi)(v_2))=\frac{\iota_{v_1}\varphi\wedge\iota_{v_2}\varphi\wedge\omega}{\omega^3/3!}=-\omega(\iota_{v_1} \varphi,\iota_{v_2}\varphi).\]
\end{prop}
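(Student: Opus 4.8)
The plan is to prove the three identities claimed for $q(\omega,\varphi)$ by the same device used in the proof of the previous proposition: all three are polynomial identities in $\varphi$ (with $\omega$ fixed), so it suffices to verify them on a single $\mathrm{Sp}(V,\omega)$-orbit representative, or more cheaply, to verify them abstractly from the definitions of $K$ and the pairing $\omega(\iota_{v_1}\varphi,\iota_{v_2}\varphi)$ on $\bigwedge^2 V^*$ induced by $\omega$. First I would record the three candidate bilinear forms:
\[
q_1(v_1,v_2)=\omega(v_1,K(\varphi)(v_2)),\qquad
q_2(v_1,v_2)=\frac{\iota_{v_1}\varphi\wedge\iota_{v_2}\varphi\wedge\omega}{\omega^3/3!},\qquad
q_3(v_1,v_2)=-\omega(\iota_{v_1}\varphi,\iota_{v_2}\varphi),
\]
where in $q_3$ we use the isomorphism $\bigwedge^2 V^*\to\bigwedge^2 V$ supplied by $\omega$ and then the pairing $\omega$ on $\bigwedge^2 V$. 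The core of the argument is the chain $q_1=q_2=q_3$; symmetry then follows for free, since once we know $q=q_3$, swapping $v_1,v_2$ changes nothing.

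The identity $q_1=q_2$ is essentially unwinding the definition of $K$. By definition $K(\varphi)(v_2)=-\iota_{v_2}\varphi\wedge\varphi$, viewed as an element of $V\otimes\bigwedge^6 V^*$ via $\bigwedge^5 V^*\cong V\otimes\bigwedge^6 V^*$, and with $\omega$ present the line $\bigwedge^6 V^*$ is trivialized by $\omega^3/3!$. So I must show
\[
\omega\bigl(v_1,\,-\iota_{v_2}\varphi\wedge\varphi\bigr)=\frac{\iota_{v_1}\varphi\wedge\iota_{v_2}\varphi\wedge\omega}{\omega^3/3!},
\]
where on the left the $5$-form $-\iota_{v_2}\varphi\wedge\varphi$ is first converted to a vector. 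The clean way to see this is to contract with $v_1$: for any $5$-form $\beta$, the vector $\beta^\sharp$ associated to $\beta$ (via $\omega^3/3!$) satisfies $\omega(v_1,\beta^\sharp)\,\omega^3/3! = \iota_{v_1}(\iota_{\beta^\sharp}\omega)\wedge\omega^3/\dots$, but more directly one uses the tautological relation $\iota_{v_1}\beta = -\,\omega(v_1,\beta^\sharp)\cdot(\text{the }6\text{-form})$ up to sign bookkeeping, i.e. pairing a vector against a $5$-form lands in $\bigwedge^6 V^*$. Applying $\iota_{v_1}$ to $-\iota_{v_2}\varphi\wedge\varphi$ and using $\iota_{v_1}(\iota_{v_2}\varphi\wedge\varphi)=\iota_{v_1}\iota_{v_2}\varphi\wedge\varphi+\iota_{v_2}\varphi\wedge\iota_{v_1}\varphi$, and then wedging the $\iota_{v_2}\varphi\wedge\iota_{v_1}\varphi$ piece appropriately against $\omega$, reproduces the middle expression after matching signs and the Lefschetz normalization $\omega^3/3!$. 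The $\iota_{v_1}\iota_{v_2}\varphi\wedge\varphi$ term is a $6$-form whose coefficient is symmetric under $v_1\leftrightarrow v_2$ in a way that cancels — this sign/normalization bookkeeping is the one genuinely fiddly spot, and it is where I would be most careful, preferring to double-check it on the normal form $\varphi=e^{135}-e^{146}-e^{236}-e^{245}$, $\omega=e^{12}+e^{34}+e^{56}$ from the table. The identity $q_2=q_3$ is then a standard fact about the $\omega$-induced pairing on forms: for $\alpha,\beta\in\bigwedge^2 V^*$ one has $\alpha\wedge\beta\wedge\omega/(\omega^3/3!) = -\langle\alpha,\beta\rangle_\omega$ in dimension $6$ (the sign and the ``$\omega$ once'' reflecting that $2+2+2=6$), applied to $\alpha=\iota_{v_1}\varphi$, $\beta=\iota_{v_2}\varphi$.

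The main obstacle is therefore not conceptual but bookkeeping: getting every sign and every factor of $3!$ right across the three isomorphisms (vector/$5$-form, $2$-form/$2$-vector, trivialization of $\bigwedge^6 V^*$). My fallback, guaranteed to work since the statement is polynomial and $\mathrm{Sp}(V,\omega)$-equivariant, is a brute-force check on the two orbit representatives $\clo_-^\pm(\mu)$ (equivalently on a Zariski-dense orbit, so that the identity on all of $\bigwedge^2 V^*$, or on $\bigwedge^3_0 V^*$, follows by continuity/density), which reduces everything to a finite computation with monomials $e^{ijk}$ — exactly the strategy invoked in the proof of the preceding proposition via Hitchin's formulas \eqref{relations}. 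I would present the abstract derivation as the main argument and note that the normal-form computation provides an independent verification.
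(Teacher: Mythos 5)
Your fallback argument is exactly the paper's proof: the three expressions are algebraic (polynomial) in $\varphi$, the union $\coprod_{\mu>0}\clo_-^+(\mu)$ is open in $\bigwedge^3_0V^*$, so it suffices to check the identities there, and the paper simply cites \cite[Lemma 5]{fei2021b} for that case rather than redoing the normal-form computation you propose. So as a whole your proposal is acceptable. Your primary route --- deriving $q_1=q_2=q_3$ directly from the definitions --- is a genuinely different and more self-contained argument (it never needs the density trick or Hitchin's formulas), and it can be made to work; its advantage is that it explains \emph{why} the identity holds, while the paper's route buys brevity at the cost of outsourcing the computation.

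However, as sketched your abstract derivation has one concrete gap: you never invoke primitivity of $\varphi$, and both nontrivial steps require it. For $q_1=q_2$: contracting $\iota_{K(\varphi)v_2}\frac{\omega^3}{3!}=-\iota_{v_2}\varphi\wedge\varphi$ with $v_1$ and wedging with $\omega$, and using $\omega\wedge\iota_{v_1}\iota_{Kv_2}\frac{\omega^3}{3!}=\omega(Kv_2,v_1)\frac{\omega^3}{3!}$, one gets
\[
\omega(Kv_2,v_1)\,\frac{\omega^3}{3!}=-\bigl(\iota_{v_1}\iota_{v_2}\varphi\bigr)\wedge\omega\wedge\varphi-\iota_{v_2}\varphi\wedge\iota_{v_1}\varphi\wedge\omega .
\]
The first term drops because $\omega\wedge\varphi=0$, not because of any $v_1\leftrightarrow v_2$ symmetry (in fact $\iota_{v_1}\iota_{v_2}\varphi$ is antisymmetric, and without primitivity this term survives); your stated reason for its disappearance is therefore wrong, even though the conclusion is right. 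For $q_2=q_3$: the ``standard fact'' $\alpha\wedge\beta\wedge\omega/(\omega^3/3!)=-\langle\alpha,\beta\rangle_\omega$ is false for general 2-forms --- test $\alpha=\beta=\omega$, where the left side is $6$ while the right side is $\mp3$; the correct general identity carries an extra term involving $\Lambda\alpha\cdot\Lambda\beta$, and the clean proportionality holds only for primitive 2-forms. It applies here because $\iota_v\varphi$ is automatically primitive when $\varphi$ is: from $\omega\wedge\varphi=0$ one gets $\omega\wedge\iota_v\varphi=-(\iota_v\omega)\wedge\varphi$, hence $\omega^2\wedge\iota_v\varphi=-(\iota_v\omega)\wedge\omega\wedge\varphi=0$. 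With these two insertions your direct argument closes (and symmetry of $q$ is already evident from the middle expression, since 2-forms commute under wedge); alternatively your normal-form fallback, which coincides with the paper's proof, covers everything.
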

\begin{proof}
Since everything is algebraic and the union $\coprod_{\mu>0}\clo_-^+(\mu)$ is an open subset in $\bigwedge^3_0V^*$, we only need to prove this proposition for every member $\varphi\in\clo_-^+(\mu)$. This special case has been done in \cite[Lemma 5]{fei2021b}.
\end{proof}
The symmetric bilinear form $q(\omega,\varphi)$ has different signature depending on the orbit type of $\varphi$. To be more specific, we have:
\begin{prop}\label{cases}~
\begin{enumerate}
\item For $\varphi\in\clo_-^+(\mu)$, the symmetric bilinear form $q(\omega,\varphi)$ is positive definite, namely its signature is $(0,6,0)$.
\item For $\varphi\in\clo_-^-(\mu)$, the symmetric bilinear form $q(\omega,\varphi)$ has signature $(0,2,4)$.
\item For $\varphi\in\clo_+(\mu)$, the symmetric bilinear form $q(\omega,\varphi)$ has split signature, namely its signature is $(0,3,3)$.
\item For $\varphi\in\clo_0^+$, the symmetric bilinear form $q(\omega,\varphi)$ has signature $(3,3,0)$. In particular the Lagrangian subspace $L:=\ker K(\varphi)=\img~K(\varphi)=\ker F(\varphi)$ is $q$-orthogonal to every vector in $V$. The induced bilinear form on $V/L$ is positive definite. As the map $K(\varphi):V/L\to L$ is an isomorphism, we also obtain a canonical positive definite metric on $L$.
\item For $\varphi\in\clo_0^-$, the symmetric bilinear form $q(\omega,\varphi)$ has signature $(3,1,2)$. In particular the Lagrangian subspace $L:=\ker K(\varphi)=\img~K(\varphi)=\ker F(\varphi)$ is $q$-orthogonal to every vector in $V$. The induced bilinear form on $V/L$ has signature $(1,2)$. As the map $K(\varphi):V/L\to L$ is an isomorphism, we also obtain a canonical metric of signature $(1,2)$ on $L$.
\item For $\varphi\in\clo_1^+$, the symmetric bilinear form $q(\omega,\varphi)$ has signature $(5,1,0)$. In particular the coisotropic subspace $\ker\varphi=\ker K(\varphi)$ is $q$-orthogonal to every vector in $V$. The induced bilinear form on $V/\ker\varphi$ is positive definite.
\item For $\varphi\in\clo_1^-$, the symmetric bilinear form $q(\omega,\varphi)$ has signature $(5,0,1)$. In particular the coisotropic subspace $\ker\varphi=\ker K(\varphi)$ is $q$-orthogonal to every vector in $V$. The induced bilinear form on $V/\ker\varphi$ is negative definite.
\item For $\varphi\in\clo_3\cap\bigwedge^3_0V^*$ or $\varphi=0$, the bilinear form $q(\omega,\varphi)$ vanishes. 
\end{enumerate}
\end{prop}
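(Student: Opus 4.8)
The plan is to reduce everything to the identity $q(\omega,\varphi)(v_1,v_2)=\omega(v_1,K(\varphi)(v_2))$ from the preceding proposition, and to treat the nullity together with the subspace assertions in one stroke. Since $\omega$ is nondegenerate and $q(\omega,\varphi)$ is symmetric, a vector $v$ lies in the radical of $q(\omega,\varphi)$ if and only if $\omega(v_1,K(\varphi)(v))=0$ for all $v_1$, i.e.\ if and only if $K(\varphi)(v)=0$; thus the radical of $q(\omega,\varphi)$ equals $\ker K(\varphi)$ \emph{exactly}. Reading $\dim\ker K(\varphi)$ off the dimension table established earlier (the symplectic refinements $\clo_0^\pm$ and $\clo_1^\pm$ inheriting this value from the ambient $\mathrm{GL}(V)$-orbits) gives nullity $0$ on $\clo_\pm$, nullity $3$ on $\clo_0^\pm$, nullity $5$ on $\clo_1^\pm$, and nullity $6$ on $\clo_3\cap\bigwedge^3_0V^*$ and on $\{0\}$. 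Combined with the earlier structural results this already yields the subspace claims: on $\clo_0^\pm$ the radical is the Lagrangian $L=\ker K(\varphi)=\img K(\varphi)=\ker F(\varphi)$, so $L$ is $q$-orthogonal to all of $V$ and $K(\varphi)$ descends to an isomorphism $V/L\to\img K(\varphi)=L$; on $\clo_1^\pm$ the radical is the coisotropic subspace $\ker K(\varphi)$ (which contains $\ker\varphi$); and on $\clo_3\cap\bigwedge^3_0V^*$ and $\{0\}$ one has $K(\varphi)=0$, so $q(\omega,\varphi)$ vanishes identically.

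The second step computes the signature of the nondegenerate form induced by $q(\omega,\varphi)$ on $V/\ker K(\varphi)$, which is all of $V$ on the stable orbits. Because $q(\omega,\varphi)$ transforms equivariantly under $\mathrm{Sp}(V,\omega)$ and the rescaling $\varphi\mapsto t\varphi$ with $t>0$ multiplies $q(\omega,\varphi)$ by $t^2$, its signature depends only on the $\mathrm{Sp}(V,\omega)$-orbit of $\varphi$ and is in particular independent of the scale $\mu$ in the families $\clo_-^\pm(\mu)$ and $\clo_+(\mu)$. It therefore suffices to evaluate $q$ on the single normal form listed for each orbit, together with $\omega=e^{12}+e^{34}+e^{56}$, via $q(e_i,e_j)=\dfrac{\iota_{e_i}\varphi\wedge\iota_{e_j}\varphi\wedge\omega}{\omega^3/3!}$. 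In each of the finitely many cases this produces a small explicit symmetric matrix whose signature is read off directly; for instance the form induced on $V/L$ comes out as $\diag(2,2,2)$ for $\clo_0^+$ and as $\diag(2,-2,-2)$ for $\clo_0^-$, and as $(2)$ versus $(-2)$ on the one-dimensional quotient for $\clo_1^\pm$. Transporting via the isomorphism $K(\varphi)\colon V/L\to L$ then produces the asserted metric on $L$ in the $\clo_0^\pm$ cases, with the same signature.

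For the three stable orbits I would also run a conceptual cross-check. On $\clo_-$, Hitchin's construction gives $K(\varphi)=J(\varphi)\sqrt{-Q(\varphi)/4}$ with $J(\varphi)^2=-\id$ and $\sqrt{-Q(\varphi)/4}>0$ after trivializing $\bigwedge^6V^*$ by $\omega^3/3!$, so $q(\omega,\varphi)=\sqrt{-Q(\varphi)/4}\cdot h$ with $h(v_1,v_2)=\omega(v_1,J(\varphi)v_2)$; symmetry of $h$ together with $J(\varphi)^2=-\id$ forces $h$ to be $J(\varphi)$-invariant, and a $J$-invariant nondegenerate symmetric form on a real $6$-space has even signature, leaving only $(0,6,0)$, $(0,4,2)$, $(0,2,4)$, $(0,0,6)$, with the normal form selecting $(0,6,0)$ on $\clo_-^+(\mu)$ and $(0,2,4)$ on $\clo_-^-(\mu)$. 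On $\clo_+$ the para-complex analogue gives $K(\varphi)=P(\varphi)\sqrt{Q(\varphi)/4}$ with $P(\varphi)^2=\id$; here $h(v_1,v_2)=\omega(v_1,P(\varphi)v_2)$ is symmetric and \emph{anti}-invariant under $P(\varphi)$, which forces each of the two $3$-dimensional eigenspaces of $P(\varphi)$ to be $h$-isotropic, hence $q(\omega,\varphi)$ of split signature $(0,3,3)$.

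The argument is essentially bookkeeping; the one place requiring genuine care is the sign and orientation conventions, so that ``positive definite'' versus ``negative definite'' and the $\pm$ discriminating $\clo_0^\pm$ from each other and $\clo_1^\pm$ from each other come out consistent with the chosen normal forms and with $\omega^3/3!$ fixing the orientation. The obvious potential worry — that the form induced on $V/\ker K(\varphi)$ might fail to be nondegenerate, so that the ``reduced signature'' is ill-defined — does not arise, precisely because the first step identifies the radical of $q(\omega,\varphi)$ with $\ker K(\varphi)$ on the nose rather than merely up to inclusion.
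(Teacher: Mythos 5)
Your proposal is correct and follows essentially the route the paper intends (the proposition is stated as a case-by-case consequence of the symplectic normal forms): identify the radical of $q(\omega,\varphi)$ with $\ker K(\varphi)$ via nondegeneracy of $\omega$, then read off the nondegenerate part by evaluating $q$ on each normal form with $\omega=e^{12}+e^{34}+e^{56}$, the $\mathrm{Sp}(V,\omega)$-equivariance and quadratic scaling making this sufficient. Your radical-equals-$\ker K(\varphi)$ step is a tidy way to get the nullities and the subspace claims at once, and it also correctly treats the $\clo_1^\pm$ cases, where the radical is the $5$-dimensional $\ker K(\varphi)\supsetneq\ker\varphi$ and the induced definite form lives on the one-dimensional quotient $V/\ker K(\varphi)$.
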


In the work \cite{fei2021b}, a primitive 3-form $\varphi$ is called positive if it is a member of the orbit $\clo_-^+(\mu)$ for some $\mu>0$. In this setting, the natural almost complex structure $J(\varphi)$ from Hitchin's construction is compatible with $\omega$ so the associated metric $g(\varphi)(\cdot,\cdot)=\omega(\cdot,J(\varphi)\cdot)$ is positive definite. We can define the norm of $\varphi$ through
\[|\varphi|^2\frac{\omega^3}{3!}=\varphi\wedge\hat\varphi,\]
which turns out to be the same as the norm of $\varphi$ measured under the metric $g(\varphi)$. In this set-up, the relations in (\ref{relations}) can be rewritten as \bea
\begin{split}K(\varphi)&=\frac{1}{2}|\varphi|^2J(\varphi),\\
F(\varphi)&=|\varphi|^2\hat\varphi,\\
Q(\varphi)&=-|\varphi|^4,\\
\mu&=\frac{1}{2}|\varphi|,\\
q(\omega,\varphi)&=\frac{|\varphi|^2}{2}g(\varphi).
\end{split}
\eea

\section{Integrability conditions}

From now on we shall always work under the presence of a symplectic form. Let $(M,\omega)$ be a connected symplectic 6-manifold, and let $\varphi$ be a smooth primitive 3-form on $M$. As explained in the previous section, we naturally have $K(\varphi)$, $F(\varphi)$ and $Q(\varphi)$ as smooth sections of $\endo~TM$, $\bigwedge^3_0T^*M$ and $\bigwedge^0T^*M$ respectively. Therefore we can talk about various notions of integrability associated to $\varphi$. For the convenience of notation, for $K(\varphi)$, $F(\varphi)$ and $Q(\varphi)$ and similar quantities, we drop the reference to $\varphi$ when there is no confusion.
\begin{dfn}~
\begin{enumerate}
\item The 3-form $\varphi$ is called \emph{integrable} if $\ud\varphi=0$. 
\item The 3-form $\varphi$ is called \emph{$K$-integrable} if the Nijenhuis tensor $N_K$ of $K(\varphi)$ vanishes. Here $N_K$ is defined by
\[N_K(X,Y)=-K^2[X,Y]+K([KX,Y]+[X,KY])-[KX,KY]\]
for any vector fields $X$ and $Y$ on $M$.
\item The 3-form $\varphi$ is called \emph{$F$-integrable} if $\ud F(\varphi)=0$.
\item The 3-form $\varphi$ is called \emph{$Q$-integrable} if $\ud Q(\varphi)=0$, namely $Q$ is a constant function on $M$.
\end{enumerate}
\end{dfn}
It is not surprising that these notions of integrability are related to each other. To explore this aspect, let us first establish a series of useful identities.

\begin{lemma}
For any vector fields $X$ and $Y$, we have
\bea
&&\iota_X\varphi\wedge F(\varphi)=-\varphi\wedge\iota_XF(\varphi)=\frac{1}{2}\iota_X(\varphi\wedge F(\varphi)),\label{o1}\\
&&\iota_X\varphi\wedge\iota_YF(\varphi)+\iota_Y\varphi\wedge\iota_XF(\varphi)=0,\label{o21}\\
&&\iota_Y\iota_X\varphi\wedge F(\varphi)=\varphi\wedge\iota_Y\iota_XF(\varphi)\label{o22}.
\eea
\end{lemma}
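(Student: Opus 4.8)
The plan is to reduce all three identities to a single fiberwise algebraic fact. Holding $X$ and $Y$ fixed, each of \eqref{o1}, \eqref{o21}, \eqref{o22} asserts the vanishing of an expression that is polynomial in $\varphi\in\bigwedge^3_0V^*$, and a polynomial that vanishes on a nonempty open subset of a vector space vanishes identically. Hence it suffices to verify the identities for $\varphi$ ranging over the nonempty open set $\coprod_{\mu>0}\clo_-^+(\mu)$ (the same open set already invoked above). On this set I would use Hitchin's construction: $J:=J(\varphi)$ is a complex structure, $\Psi:=\varphi+\sqrt{-1}\,\hat\varphi$ is a nowhere-vanishing $(3,0)$-form, and the relations recorded at the end of Section 2 give $F(\varphi)=|\varphi|^2\hat\varphi$, so that $\hat\varphi=\im\Psi=|\varphi|^{-2}F(\varphi)$.

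The key input is a Hodge-type count with respect to $J$: for any real tangent vectors $X,Y$, the form $\iota_X\Psi$ is of type $(2,0)$ and $\iota_Y\iota_X\Psi$ is of type $(1,0)$, since contracting a $(3,0)$-form with the antiholomorphic component of a real vector vanishes. Because $\dim_\C(V,J)=3$, there are no nonzero forms of type $(4,0)$ or $(5,0)$, so
\[\iota_X\Psi\wedge\Psi=0,\qquad \iota_X\Psi\wedge\iota_Y\Psi=0,\qquad \iota_Y\iota_X\Psi\wedge\Psi=0.\]
I would then substitute $\Psi=\varphi+\sqrt{-1}\,\hat\varphi$ (and the analogous expansions of $\iota_X\Psi$, $\iota_Y\iota_X\Psi$), multiply out each product, and extract imaginary parts; after reordering the resulting wedge products by graded-commutativity, the three vanishings become $\iota_X\varphi\wedge\hat\varphi=-\varphi\wedge\iota_X\hat\varphi$, $\iota_X\varphi\wedge\iota_Y\hat\varphi=-\iota_Y\varphi\wedge\iota_X\hat\varphi$, and $\iota_Y\iota_X\varphi\wedge\hat\varphi=\varphi\wedge\iota_Y\iota_X\hat\varphi$. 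Multiplying through by $|\varphi|^2$ and using $F(\varphi)=|\varphi|^2\hat\varphi$ produces the first equality in \eqref{o1} together with \eqref{o21} and \eqref{o22}; the remaining equality $\iota_X\varphi\wedge F=\tfrac12\iota_X(\varphi\wedge F)$ in \eqref{o1} is then immediate from the antiderivation identity $\iota_X(\varphi\wedge F)=\iota_X\varphi\wedge F-\varphi\wedge\iota_X F$.

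I expect the only real friction to be the sign bookkeeping in the last step. The three statements look superficially alike, but \eqref{o22} must come out with a plus sign precisely because $\iota_Y\iota_X\hat\varphi$ is a $1$-form while $\iota_X\hat\varphi$ is a $2$-form, so keeping the parities straight (together with the signs produced by expanding products of the form $(\varphi+\sqrt{-1}\,\hat\varphi)\wedge(\,\cdot\,)$) is where care is needed; everything else is formal. As an alternative one could try to deduce \eqref{o21} and \eqref{o22} from \eqref{o1} by applying $\iota_Y$ and using the antiderivation rule, but that only shows \eqref{o21} and \eqref{o22} are equivalent given \eqref{o1}, so one of them still needs a direct argument — the Hodge-type observation disposes of all three uniformly, so I would present it that way.
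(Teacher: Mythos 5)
Your argument is correct, but it is a genuinely different route from the paper's. You reduce everything, by polynomiality in $\varphi$ and density of the open set $\coprod_{\mu>0}\clo_-^+(\mu)\subset\bigwedge^3_0V^*$, to the Hitchin picture, where $\iota_X\Psi$, $\iota_Y\iota_X\Psi$ are of type $(2,0)$ and $(1,0)$ for $\Psi=\varphi+\sqrt{-1}\hat\varphi$, so the three wedge products $\iota_X\Psi\wedge\Psi$, $\iota_X\Psi\wedge\iota_Y\Psi$, $\iota_Y\iota_X\Psi\wedge\Psi$ vanish for type reasons; taking imaginary parts and using $F(\varphi)=|\varphi|^2\hat\varphi$ then gives \eqref{o1}, \eqref{o21}, \eqref{o22}, and your sign bookkeeping (the plus sign in \eqref{o22} coming from the odd degree of $\iota_Y\iota_X\hat\varphi$) checks out. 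The paper instead works uniformly for all $\varphi$, with no reduction to an open orbit: it proves \eqref{o1} directly from the defining identities $\iota_{KX}\tfrac{\omega^3}{3!}=-\iota_X\varphi\wedge\varphi$, $\iota_XF(\varphi)=-2\iota_{KX}\varphi$, $\varphi\wedge F(\varphi)=-Q\tfrac{\omega^3}{3!}$ and $K^2=\tfrac{Q}{4}\id$, and then obtains \eqref{o21} and \eqref{o22} by contracting \eqref{o1} with $\iota_Y$. On that last point your closing aside is slightly off: applying $\iota_Y$ to \eqref{o1} does yield \eqref{o21} by itself, because $\iota_Y\iota_X(\varphi\wedge F(\varphi))$ and $\iota_Y\iota_X\varphi\wedge F(\varphi)$ are both antisymmetric in $X,Y$, forcing the cross term $\iota_X\varphi\wedge\iota_YF(\varphi)$ to be antisymmetric as well — this is exactly how the paper argues, so no independent verification of \eqref{o21} or \eqref{o22} is needed there. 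This does not affect the validity of your proof; what your approach buys is brevity and a transparent conceptual reason (type considerations in complex dimension three), consistent with the paper's own use of open-orbit density elsewhere, while the paper's computation buys an identity-level proof that never leaves the general setting or invokes Hitchin's relations.
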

\begin{proof}
We shall prove this lemma by direct calculation. It is noteworthy that we have the following equations from the definition of $K$, $F$, and $Q$:
\bea
\iota_{KX}\frac{\omega^3}{3!}&=&-\iota_X\varphi\wedge\varphi,\label{K}\\
\iota_X F(\varphi)&=&-2\iota_{KX}\varphi,\label{F}\\
\varphi\wedge F(\varphi)&=&-Q(\varphi)\frac{\omega^3}{3!}\label{Q},\\
K^2&=&\frac{Q(\varphi)}{4}\id\label{square}.
\eea
Using (\ref{F}) and (\ref{K}), we get
\[-\varphi\wedge\iota_XF(\varphi)=2\varphi\wedge\iota_{KX}\varphi=-2\iota_{K^2X}\frac{\omega^3}{3!}=-\frac{Q}{2}\iota_X\frac{\omega^3}{3!}.\]
Therefore
\[\iota_X\varphi\wedge F(\varphi)-\varphi\wedge\iota_XF(\varphi)=\iota_X(\varphi\wedge F(\varphi))=-\iota_XQ\frac{\omega^3}{3!}=-2\varphi\wedge\iota_XF(\varphi)\]
and we prove (\ref{o1}).

Take $\iota_Y$ of left and right sides of (\ref{o1}), we get
\[\frac{1}{2}\iota_Y\iota_X(\varphi\wedge F(\varphi))=\iota_Y(\iota_X\varphi\wedge F(\varphi))=\iota_Y\iota_X\varphi\wedge F(\varphi)+\iota_X\varphi\wedge\iota_YF(\varphi).\]
As this expression is anti-symmetric in $X$ and $Y$, we obtain (\ref{o21}).

Take $\iota_Y$ of left and middle sides of (\ref{o1}), we get
\[\iota_Y\iota_X\varphi\wedge F(\varphi)+\iota_X\varphi\wedge\iota_YF(\varphi)=-\iota_Y\varphi\wedge\iota_XF(\varphi)+\varphi\wedge\iota_Y\iota_X F(\varphi).\]
Plug in (\ref{o21}) we prove (\ref{o22}).
\end{proof}
Next we establish an identity relating the Nijenhuis tensor $N_K$ with other quantities like $\ud\varphi$ and $\ud F(\varphi)$.
\begin{thm}
The Nijenhuis tensor $N_K$ satisfies
\bea\label{id}
\iota_{N_K(X,Y)}\frac{\omega^3}{3!}&=&\iota_Y\iota_X\ud\varphi\wedge F(\varphi)-\ud\varphi\wedge\iota_Y\iota_X F(\varphi)+2\varphi\wedge(\iota_Y\iota_{KX}-\iota_X\iota_{KY})\ud\varphi\nonumber\\
&&+\varphi\wedge\iota_Y\iota_X\ud F(\varphi).
\eea
\end{thm}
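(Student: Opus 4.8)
The plan is to expand the Nijenhuis tensor $N_K(X,Y)$ directly using its defining formula together with the two algebraic identities $K^2 = \tfrac{Q}{4}\id$ (equation (\ref{square})) and $\iota_X F(\varphi) = -2\iota_{KX}\varphi$ (equation (\ref{F})), and then to recognize each resulting term as a component of the right-hand side of (\ref{id}) by using Cartan's formula and the lemma above. Since both sides of (\ref{id}) are tensorial in $X$ and $Y$ (the left-hand side visibly so, the right-hand side because every $\ud\varphi$ and $\ud F(\varphi)$ is wedged against forms while the interior products $\iota_{KX}$, $\iota_{X}$ only ever hit the exterior derivatives), I am free to assume at a given point that $X$ and $Y$ are extended to commuting vector fields, so that $[X,Y]=0$ and more generally Lie brackets reduce to directional derivatives of the coefficient functions. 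This is the standard device that converts the bracket terms $[KX,Y]$, $[X,KY]$, $[KX,KY]$ into derivatives of the tensor $K$, and ultimately into derivatives of $\varphi$.

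The key steps, in order, are as follows. First, contract the target identity: I want a formula for $\iota_{N_K(X,Y)}\tfrac{\omega^3}{3!}$, so I apply $\iota_{(\cdot)}\tfrac{\omega^3}{3!}$ to the definition of $N_K$ and use (\ref{K}), which reads $\iota_{KZ}\tfrac{\omega^3}{3!} = -\iota_Z\varphi\wedge\varphi$, to turn $\iota_{K^2[X,Y]}$, $\iota_{K[KX,Y]}$, etc. into wedge expressions involving $\varphi$ and $\iota_Z\varphi$. Second, with $[X,Y]=0$, I rewrite the remaining brackets: $[KX,Y] = -\mathcal L_Y(KX) = -(\mathcal L_Y K)(X) - K(\mathcal L_Y X) = -(\mathcal L_Y K)X$ since $\mathcal L_Y X = [Y,X] = 0$; similarly for $[X,KY]$ and $[KX,KY]$. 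So everything is expressed through $(\mathcal L_Y K)X$, $(\mathcal L_X K)Y$, and $(\mathcal L_{KX}K)Y - (\mathcal L_{KY}K)X$. Third — and this is the heart of the computation — I convert Lie derivatives of $K$ into exterior derivatives of $\varphi$. From $\iota_{KZ}\tfrac{\omega^3}{3!} = -\iota_Z\varphi\wedge\varphi$ and $\mathcal L_W \tfrac{\omega^3}{3!}=0$ for the (constant-volume normalization we are using, since $\omega$ is closed) one differentiates: $\iota_{(\mathcal L_W K)Z}\tfrac{\omega^3}{3!} = -\mathcal L_W(\iota_Z\varphi\wedge\varphi) + \iota_{K(\mathcal L_W Z)}\tfrac{\omega^3}{3!}$, and then Cartan's magic formula $\mathcal L_W = \ud\iota_W + \iota_W\ud$ brings in $\ud\varphi$. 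Matching the pieces that survive against the four terms on the right of (\ref{id}), and using (\ref{o1})--(\ref{o22}) to rearrange wedge products of $\iota\varphi$ and $F(\varphi)$ (recalling $F(\varphi)$ is built from $\iota_{K(\cdot)}\varphi$ via (\ref{F})), should produce exactly the claimed combination $\iota_Y\iota_X\ud\varphi\wedge F - \ud\varphi\wedge\iota_Y\iota_X F + 2\varphi\wedge(\iota_Y\iota_{KX}-\iota_X\iota_{KY})\ud\varphi + \varphi\wedge\iota_Y\iota_X\ud F$.

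The main obstacle I anticipate is bookkeeping the many wedge terms without sign errors: the four $\iota_{KZ}$-substitutions each generate products like $\iota_W(\iota_Z\varphi\wedge\varphi)$ whose expansion contributes to several of the target terms simultaneously, and the $K^2 = \tfrac{Q}{4}\id$ substitution in the $-K^2[X,Y]$ term is only innocuous once one checks (with the commuting-frame assumption) that the $\ud Q$ contributions it might spawn actually cancel or reassemble into the $\varphi\wedge\iota_Y\iota_X\ud F(\varphi)$ piece via $\varphi\wedge F = -Q\,\tfrac{\omega^3}{3!}$. A clean way to control this is to verify the identity first on the open stratum $\clo_-^+(\mu)$ — where Hitchin's theory gives $K = \tfrac12|\varphi|^2 J$ and $F = |\varphi|^2\hat\varphi$, so $N_K$ is (up to a conformal factor that must be tracked) the Nijenhuis tensor of $J$, and the formula becomes a known identity relating $N_J$ to $\ud\varphi$ and $\ud\hat\varphi$ — and then to note that both sides of (\ref{id}) are real-analytic (indeed polynomial in $\varphi$ and its first derivatives with the $\tfrac{\omega^3}{3!}$ normalization), so agreement on a dense open set forces agreement everywhere. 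I would present the direct computation as the main argument and mention the $\clo_-^+$ consistency check as corroboration.
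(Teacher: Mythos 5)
Your overall strategy is the same as the paper's: contract $N_K(X,Y)$ into $\frac{\omega^3}{3!}$, use $\iota_{KZ}\frac{\omega^3}{3!}=-\iota_Z\varphi\wedge\varphi$, $\iota_ZF(\varphi)=-2\iota_{KZ}\varphi$ and Cartan's formula, and reorganize with the algebraic identities (\ref{o1})--(\ref{o22}). The reduction to commuting extensions of $X,Y$ is legitimate (both sides are indeed $C^\infty$-bilinear in $X,Y$), and the paper simply does without it. However, there is a concrete error at what you yourself call the heart of the computation: the claim that $\mathcal{L}_W\frac{\omega^3}{3!}=0$ ``since $\omega$ is closed.'' For a general vector field $W$ one has $\mathcal{L}_W\frac{\omega^3}{3!}=\ud\iota_W\frac{\omega^3}{3!}$, which is the divergence term and does not vanish; closedness of $\omega$ gives $\mathcal{L}_W\omega=\ud\iota_W\omega=0$ only for symplectic $W$. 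In your scheme the relevant $W$'s are $X$, $Y$, $KX$, $KY$, and even if you take $X,Y$ to be Darboux coordinate fields, you cannot avoid $W=KX,KY$, for which $\mathcal{L}_{KX}\frac{\omega^3}{3!}=\ud\iota_{KX}\frac{\omega^3}{3!}=-\ud(\iota_X\varphi\wedge\varphi)=-\ud\iota_X\varphi\wedge\varphi+\iota_X\varphi\wedge\ud\varphi$. These are exactly the $\ud\varphi$-carrying contributions that the paper keeps (they are the source of terms such as $\iota_Y\varphi\wedge\iota_{KX}\ud\varphi$ and the $-\frac12\ud\iota_{KY}\iota_{KX}\frac{\omega^3}{3!}$ piece in its computation); dropping them changes the coefficients of precisely the terms the theorem asserts. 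Since the whole content of the identity is the exact combination and coefficients, and your sketch both contains this false vanishing and stops at ``should produce exactly the claimed combination'' without carrying out the bookkeeping, the main argument as described would fail.

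The fallback argument is also incomplete as stated. The extension step itself is sound: both sides are polynomial in the $1$-jet of the primitive form $\varphi$ at a point, so agreement for all jets with value in the open set $\coprod_\mu\clo_-^+(\mu)$ forces agreement everywhere (this is the same device the paper uses for its purely algebraic propositions). But on $\clo_-^+(\mu)$ the verification is not a quotation of a known fact: there $K=\frac12|\varphi|^2J$ with $|\varphi|^2$ a non-constant function, so $N_K$ differs from $\frac14|\varphi|^4N_J$ by terms in $\ud|\varphi|^2$, and no standard identity expressing $N_J$ through $\ud\varphi$ and $\ud\hat\varphi$ in exactly the form (\ref{id}) is cited or derived; establishing it, with the conformal corrections, is work comparable to the direct computation you are trying to avoid. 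To repair the proposal, either redo the Lie-derivative step keeping $\iota_{KZ}\,\ud\iota_W\frac{\omega^3}{3!}$ (equivalently, follow the paper and apply Cartan's formula directly to $\iota_{[KX,KY]}$, $\iota_{K([KX,Y]+[X,KY])}$, $\iota_{K^2[X,Y]}$ for arbitrary $X,Y$), or actually carry out and record the $\clo_-^+$ verification including the conformal-factor terms.
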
 
\begin{proof}
From (\ref{K}) and (\ref{F}) we know
\bea
&&-\iota_{[KX,KY]}\frac{\omega^3}{3!}=-\iota_{\cll_{KX}(KY)}\frac{\omega^3}{3!}\nonumber\\
&=&-\cll_{KX}\iota_{KY}\frac{\omega^3}{3!}+\iota_{KY}\cll_{KX}\frac{\omega^3}{3!} \nonumber\\
&=&-\cll_{KX}\iota_{KY}\frac{\omega^3}{3!} +\cll_{KY}\iota_{KX}\frac{\omega^3}{3!}-d\iota_{KY}\iota_{KX}\frac{\omega^3}{3!}\nonumber\\
&=&\cll_{KX}(\iota_Y\varphi\wedge\varphi)-\cll_{KY}(\iota_X\varphi\wedge\varphi)-d\iota_{KY}\iota_{KX}\frac{\omega^3}{3!}\nonumber\\
&=&\left(\cll_{KX}\iota_Y\varphi-\cll_{KY}\iota_X\varphi\right)\wedge\varphi-d\iota_{KY}\iota_{KX}\frac{\omega^3}{3!} +\iota_Y\varphi\wedge\iota_{KX}\ud\varphi\nonumber\\
&&-\iota_X\varphi\wedge\iota_{KY}\ud\varphi+\frac{1}{2}\iota_X\varphi\wedge\ud\iota_YF(\varphi)-\frac{1}{2}\iota_Y\varphi\wedge\ud\iota_XF(\varphi).\label{t1}
\eea
Similarly, we have that
\bea
&&\iota_{K([KX,Y]+[X,KY])}\frac{\omega^3}{3!}=-\iota_{[KX,Y]}\varphi\wedge\varphi-(X\leftrightarrow Y)\nonumber\\
&=&(-\cll_{KX}\iota_Y\varphi+\iota_Y\cll_{KX}\varphi)\wedge\varphi-(X\leftrightarrow Y)\nonumber\\
&=&\left(-\cll_{KX}\iota_Y\varphi+\iota_Y\iota_{KX}\ud\varphi-\frac{1}{2}\iota_Y\ud\iota_XF(\varphi)\right)\wedge\varphi-(X\leftrightarrow Y),\label{t2}
\eea
and
\bea
&&-\iota_{K^2[X,Y]}\frac{\omega^3}{3!}=\iota_{K[X,Y]}\varphi\wedge\varphi=-\frac{1}{2}\iota_{[X,Y]}F(\varphi)\wedge\varphi\nonumber\\
&=&-\frac{1}{2}(\cll_X\iota_YF(\varphi)-\iota_Y\cll_XF(\varphi))\wedge\varphi\nonumber\\
&=&-\frac{1}{2}(\ud\iota_X\iota_Y+\iota_X\ud\iota_Y-\iota_Y\ud\iota_X)F(\varphi)\wedge\varphi+\frac{1}{2}\iota_Y\iota_X\ud F(\varphi)\wedge\varphi.\label{t3}
\eea
Adding (\ref{t1}), (\ref{t2}) and (\ref{t3}) together, we get:
\bea
&&\iota_{N_K(X,Y)}\frac{\omega^3}{3!}\nonumber\\
&=&-\frac{1}{2}\ud\iota_{KY}\iota_{KX}\frac{\omega^3}{3!}+\frac{1}{2}\iota_X\varphi\wedge\ud\iota_YF(\varphi) -\frac{1}{4}\ud\iota_X\iota_YF(\varphi)\wedge\varphi\label{int}\\
&&+\iota_Y\varphi\wedge\iota_{KX}\ud\varphi+\iota_Y\iota_{KX}\ud\varphi\wedge\varphi+\frac{1}{4}\iota_Y\iota_X\ud F(\varphi)\wedge\varphi-(X\leftrightarrow Y).\nonumber
\eea
Notice that
\bea
&&\iota_{KY}\iota_{KX}\frac{\omega^3}{3!}=-\iota_{KY}(\iota_X\varphi\wedge\varphi)\nonumber\\
&=&\iota_X\iota_{KY}\varphi\wedge\varphi-\iota_X\wedge\iota_{KY}\varphi\nonumber\\
&=&-\frac{1}{2}\iota_X\iota_YF(\varphi)\wedge\varphi+\frac{1}{2}\iota_X\varphi\wedge\iota_YF(\varphi),\nonumber
\eea
Consequently we have
\bea
-\frac{1}{2}\ud\iota_{KY}\iota_{KX}\frac{\omega^3}{3!}&=&\frac{1}{4}\ud\iota_X\iota_YF(\varphi)\wedge\varphi +\frac{1}{4}\iota_X\iota_YF(\varphi)\wedge\ud\varphi\nonumber\\
&&-\frac{1}{4}\ud\iota_X\varphi\wedge\iota_YF(\varphi)-\frac{1}{4}\iota_X\varphi\wedge\ud\iota_YF(\varphi).\label{kk}
\eea
Plug (\ref{kk}) in (\ref{int}), we obtain
\bea
&&\iota_{N_K(X,Y)}\frac{\omega^3}{3!}\nonumber\\
&=&\frac{1}{4}\iota_X\varphi\wedge\ud\iota_YF(\varphi)-\frac{1}{4}\ud\iota_X\varphi\wedge\iota_YF(\varphi)+\frac{1}{4}\iota_X\iota_YF(\varphi)\wedge\ud\varphi \nonumber\\
&&+\iota_Y\varphi\wedge\iota_{KX}\ud\varphi+\iota_Y\iota_{KX}\ud\varphi\wedge\varphi+\frac{1}{4}\iota_Y\iota_X\ud F(\varphi)\wedge\varphi-(X\leftrightarrow Y)\nonumber\\
&=&\frac{1}{4}\iota_X\varphi\wedge(\cll_YF(\varphi)-\iota_Y\ud F(\varphi))-\frac{1}{4}(\cll_X\varphi-\iota_X\ud\varphi)\wedge\iota_YF(\varphi)+\frac{1}{4}\iota_X\iota_YF(\varphi)\wedge\ud\varphi\nonumber\\
&&+\frac{1}{4}\iota_Y\iota_X\ud F(\varphi)\wedge\varphi+\iota_Y\varphi\wedge\iota_{KX}\ud\varphi+\iota_Y\iota_{KX}\ud\varphi\wedge\varphi-(X\leftrightarrow Y)\label{crucial}.
\eea
Observe that
\bea
&&\iota_X\varphi\wedge\cll_YF(\varphi)-\cll_X\varphi\wedge\iota_YF(\varphi)-(X\leftrightarrow Y)\nonumber\\
&=&\iota_X\varphi\wedge\cll_YF(\varphi)+\cll_Y\varphi\wedge\iota_XF(\varphi)-(X\leftrightarrow Y)\nonumber\\
&=&\cll_Y(\iota_X\varphi\wedge F(\varphi)+\varphi\wedge\iota_XF(\varphi))-\cll_Y\iota_X\varphi\wedge F(\varphi)-\varphi\wedge\cll_Y\iota_XF(\varphi)-(X\leftrightarrow Y)\nonumber\\
&=&(\cll_X\iota_Y-\cll_Y\iota_X)\varphi\wedge F(\varphi)+\varphi\wedge(\cll_X\iota_Y-\cll_Y\iota_X)F(\varphi),
\eea
where we used that $\iota_X\varphi\wedge F(\varphi)+\varphi\wedge\iota_XF(\varphi)=0$ from (\ref{o1}). Since
\[\cll_X\iota_Y-\cll_Y\iota_X=\iota_{[X,Y]}+\iota_Y\cll_X-\cll_Y\iota_X=\iota_{[X,Y]}+\iota_Y\iota_X\ud-\ud\iota_Y\iota_X,\]
we get
\bea
&&\iota_X\varphi\wedge\cll_YF(\varphi)-\cll_X\varphi\wedge\iota_YF(\varphi)-(X\leftrightarrow Y)\nonumber\\
&=&\iota_Y\iota_X\ud\varphi\wedge F(\varphi)+\varphi\wedge\iota_Y\iota_X\ud F(\varphi)-\ud\iota_Y\iota_X\varphi\wedge F(\varphi)-\varphi\wedge\ud\iota_Y\iota_X F(\varphi).\nonumber
\eea
By taking a $\ud$ of (\ref{o22}), we get
\[\ud\iota_Y\iota_X\varphi\wedge F(\varphi)+\varphi\wedge\ud\iota_Y\iota_X F(\varphi)=\iota_Y\iota_X\varphi\wedge\ud F(\varphi)+\ud\varphi\wedge\iota_Y\iota_XF(\varphi),\]
hence
\bea
&&\iota_X\varphi\wedge\cll_YF(\varphi)-\cll_X\varphi\wedge\iota_YF(\varphi)-(X\leftrightarrow Y)\nonumber\\
&=&\iota_Y\iota_X\ud\varphi\wedge F(\varphi)+\varphi\wedge\iota_Y\iota_X\ud F(\varphi)-\iota_Y\iota_X\varphi\wedge\ud F(\varphi)-\ud\varphi\wedge\iota_Y\iota_XF(\varphi).\nonumber
\eea
Substitute it back into (\ref{crucial}), we get
\bea
&&\iota_{N_K(X,Y)}\frac{\omega^3}{3!}\nonumber\\
&=&\frac{1}{8}(\iota_Y\iota_X\ud\varphi\wedge F(\varphi)+\varphi\wedge\iota_Y\iota_X\ud F(\varphi)-\iota_Y\iota_X\varphi\wedge\ud F(\varphi)-\ud\varphi\wedge\iota_Y\iota_XF(\varphi))\nonumber\\
&&-\frac{1}{4}\iota_X\varphi\wedge\iota_Y\ud F(\varphi)+\frac{1}{4}\iota_X\ud\varphi\wedge\iota_YF(\varphi)-\frac{1}{4}\ud\varphi\wedge\iota_Y\iota_XF(\varphi)+\frac{1}{4}\varphi\wedge\iota_Y\iota_X\ud F(\varphi)\nonumber\\
&&+\iota_Y\varphi\wedge\iota_{KX}\ud\varphi+\iota_Y\iota_{KX}\ud\varphi\wedge\varphi-(X\leftrightarrow Y).\label{tosimp}
\eea
By taking double interior product of the equations $\ud\varphi\wedge F(\varphi)=0$, $\varphi\wedge\ud F(\varphi)=0$ and $\varphi\wedge\ud\varphi=0$, we get
\bea
\iota_X\ud\varphi\wedge\iota_YF(\varphi)-(X\leftrightarrow Y)&=&\iota_Y\iota_X\ud\varphi\wedge F(\varphi)+\ud\varphi\wedge\iota_Y\iota_XF(\varphi),\nonumber\\
-\iota_X\varphi\wedge\iota_Y\ud F(\varphi)-(X\leftrightarrow Y)&=&\iota_Y\iota_X\varphi\wedge\ud F(\varphi)+\varphi\wedge\iota_Y\iota_X\ud F(\varphi),\nonumber\\
\iota_Y\varphi\wedge\iota_{KX}\ud\varphi-\iota_Y\iota_{KX}\ud\varphi\wedge\varphi &=&-\frac{1}{2}\iota_Y\ud\varphi\wedge\iota_XF(\varphi)-\frac{1}{2}\ud\varphi\wedge\iota_Y\iota_XF(\varphi).\nonumber
\eea
From these identities we can conclude that
\bea
&&\iota_{N_K(X,Y)}\frac{\omega^3}{3!}\nonumber\\
&=&\varphi\wedge\iota_Y\iota_X\ud F(\varphi)+\iota_Y\iota_X\ud\varphi\wedge F(\varphi)-\ud\varphi\wedge\iota_Y\iota_X F(\varphi)+2\varphi\wedge(\iota_Y\iota_{KX}-\iota_X\iota_{KY})\ud\varphi.\nonumber
\eea
\end{proof}
As a corollary, we have proved
\begin{cor}
If $\varphi$ is integrable and $F$-integrable, then it is $K$-integrable.
\end{cor}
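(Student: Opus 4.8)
The plan is to obtain the corollary as an immediate consequence of the identity (\ref{id}) proved in the theorem above. That identity writes $\iota_{N_K(X,Y)}\frac{\omega^3}{3!}$ as a sum of four terms, and the whole argument amounts to observing that under the two hypotheses every one of those terms vanishes, and then using nondegeneracy of $\omega$ to upgrade the vanishing of the contraction to the vanishing of $N_K$ itself.

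Concretely, I would first assume $\ud\varphi=0$ and inspect the right-hand side of (\ref{id}) term by term. The term $\iota_Y\iota_X\ud\varphi\wedge F(\varphi)$ vanishes because $\iota_Y\iota_X\ud\varphi$ is a contraction of $\ud\varphi=0$; the term $\ud\varphi\wedge\iota_Y\iota_X F(\varphi)$ vanishes for the same reason; and $2\varphi\wedge(\iota_Y\iota_{KX}-\iota_X\iota_{KY})\ud\varphi$ vanishes because $\iota_{KX}\ud\varphi$ and $\iota_{KY}\ud\varphi$ are again contractions of $\ud\varphi$. The only surviving term is $\varphi\wedge\iota_Y\iota_X\ud F(\varphi)$, and invoking the second hypothesis $\ud F(\varphi)=0$ ($F$-integrability) kills this as well. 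Hence $\iota_{N_K(X,Y)}\frac{\omega^3}{3!}=0$ for all vector fields $X,Y$ on $M$.

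Finally I would use that $(M,\omega)$ is symplectic, so $\frac{\omega^3}{3!}$ is a nowhere-vanishing top form and the bundle map $TM\to\bigwedge^5T^*M$ given by $v\mapsto\iota_v\frac{\omega^3}{3!}$ is a pointwise isomorphism. Therefore $\iota_{N_K(X,Y)}\frac{\omega^3}{3!}=0$ forces $N_K(X,Y)=0$ for all $X,Y$, which is exactly the definition of $K$-integrability. There is essentially no obstacle in this proof: all the analytic and algebraic work has already been absorbed into the theorem, and this corollary is just a matter of reading off which terms of (\ref{id}) remain; the only (trivial) point of care is confirming that the middle term is genuinely a contraction of $\ud\varphi$ rather than of some other form, which is transparent from the formula.
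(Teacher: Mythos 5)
Your argument is correct and is exactly the paper's: the corollary is read off from the identity (\ref{id}), where $\ud\varphi=0$ and $\ud F(\varphi)=0$ kill every term on the right-hand side, and the nondegeneracy of $\frac{\omega^3}{3!}$ then forces $N_K=0$. Nothing is missing.
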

\begin{rmk}\label{rmk}
Notice that $K$-integrability implies that
\[[KX,KY]=K([KX,Y]+[X,KY]-K[X,Y])\in\img~K.\]
It follows from the Frobenius theorem that the distribution $\img~K(\varphi)$ is integrable. However, it seems that $K$-integrability is a stronger condition than that $\img~K(\varphi)$ is integrable as a distribution. For example, if $\varphi$ belongs to the orbit $\clo_0^\pm$ pointwise, then $\img~K(\varphi)=\ker F(\varphi)$, hence the integrability of $\img~K(\varphi)$ is guaranteed solely by the $F$-integrability of $\varphi$. 
\end{rmk} 

Next, we also prove that
\begin{thm}
If $\varphi$ is integrable and $F$-integrable, then it is $Q$-integrable.
\end{thm}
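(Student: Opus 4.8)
The plan is to deduce $Q$-integrability from $K$-integrability. Since $\varphi$ is both integrable and $F$-integrable, the preceding Corollary already gives $N_K=0$. A direct attack --- differentiating the defining relation $Q\,\frac{\omega^3}{3!}=-\varphi\wedge F(\varphi)$ --- leads nowhere: with $\ud\varphi=0$ and $\ud F(\varphi)=0$, every such manipulation collapses to the tautology $\ud Q\wedge\iota_X\frac{\omega^3}{3!}=X(Q)\,\frac{\omega^3}{3!}$, which carries no information. Instead I would exploit the interplay between $N_K=0$ and the algebraic identity $K^2=\tfrac{Q}{4}\,\id$ from (\ref{square}).

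The key step is a pointwise identity. For an arbitrary $(1,1)$-tensor $A$ on $M$ satisfying $A^2=f\cdot\id$ for a smooth function $f$, expanding $N_A(AX,Y)$ and $A\cdot N_A(X,Y)$ and using $A^2=f\,\id$ to eliminate the $A^2$-terms, all Lie-bracket contributions cancel in the sum, leaving
\[N_A(AX,Y)+A\cdot N_A(X,Y)=\big((AY)f\big)\,X-(Yf)\,AX.\]
Applying this with $A=K(\varphi)$, $f=Q/4$, and $N_K=0$ yields the pointwise identity
\[(Y(Q))\,K(\varphi)(X)=\big((K(\varphi)(Y))(Q)\big)\,X\qquad\text{for all vector fields }X,Y.\]
This is a routine, if slightly tedious, computation.

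It remains to combine this with the fact that $\tr K(\varphi)\equiv 0$; the latter follows from (\ref{K}) and the Euler identity $\sum_i e^i\wedge\iota_{e_i}\varphi=3\varphi$, because $(\tr K)\,\frac{\omega^3}{3!}=-\sum_i e^i\wedge\iota_{e_i}\varphi\wedge\varphi=-3\,\varphi\wedge\varphi=0$ for a $3$-form $\varphi$. Now suppose $\ud Q$ is not identically zero, so that $W:=\{p\in M:\ud Q_p\neq 0\}$ is open and nonempty. Fixing $p\in W$ and a vector field $Y_0$ with $(Y_0Q)(p)\neq 0$, the displayed identity forces $K(\varphi)_p=\mu\,\id$ for some scalar $\mu$; trace-freeness then gives $\mu=0$, so $K(\varphi)$ vanishes on all of $W$. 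But $K(\varphi)^2=\tfrac{Q}{4}\,\id$ then forces $Q\equiv 0$ on $W$, whence $\ud Q\equiv 0$ on $W$, contradicting $W\neq\emptyset$. Hence $\ud Q\equiv 0$, and $Q$ is constant because $M$ is connected.

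I expect the key step to be the main obstacle --- not the computation, which is bookkeeping, but the realization that one must route through $N_K$ and $K^2=\tfrac{Q}{4}\id$ rather than differentiating the formula for $Q$ head-on. Once that identity is established, the trace argument and the connectedness conclusion follow immediately.
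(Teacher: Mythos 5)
Your proof is correct, and it takes a genuinely different route from the paper. The paper splits $M=\{Q<0\}\coprod\{Q=0\}\coprod\{Q>0\}$ and works on the open strata: on $\{Q<0\}$ it invokes Hitchin's almost complex structure $J(\varphi)$ and the fact (from the earlier Type IIA work) that $\ud\hat\varphi$ is of type $(2,2)$, so that $\ud F(\varphi)=\ud|\varphi|^2\wedge\hat\varphi+|\varphi|^2\ud\hat\varphi=0$ splits by type and forces $\ud|\varphi|^2=0$, whence $Q=-|\varphi|^4$ is constant on each component of $\{Q\neq0\}$ (with the parallel para-complex argument on $\{Q>0\}$), and connectedness of $M$ finishes the job. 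You instead route through the preceding Corollary ($N_K=0$) and the algebraic relation $K^2=\tfrac{Q}{4}\id$: your identity $N_A(AX,Y)+A\,N_A(X,Y)=((AY)f)X-(Yf)AX$ for $A^2=f\,\id$ checks out, it is tensorial in $X$ and $Y$, and with $N_K=0$ it gives $(YQ)\,KX=((KY)Q)\,X$ pointwise; combined with $\tr K=0$ (your computation via $\sum_i e^i\wedge\iota_{e_i}\varphi=3\varphi$ and $\varphi\wedge\varphi=0$ is right) this forces $K_p=0$, hence $Q(p)=0$, at any point where $\ud Q_p\neq 0$, and the open-set contradiction kills $\ud Q$ everywhere. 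Your argument is more elementary and uniform: no case split on the sign of $Q$, no appeal to the $(2,2)$-type result from the Type IIA paper, and no para-complex analogue needed. What the paper's approach buys in exchange is the extra structural information extracted along the way ($\ud|\varphi|^2=0$, $\ud\hat\varphi=0$, integrability of $J$ on $\{Q<0\}$), which feeds into the surrounding discussion (e.g.\ Theorem \ref{integ} and the Calabi-Yau interpretation), whereas your argument establishes exactly the stated conclusion.
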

\begin{proof}
We need to prove that $Q(\varphi)$ is a constant on $M$ under the condition that $\ud\varphi=\ud F(\varphi)=0$. We can decompose $M$ as
\[M=\{Q<0\}\coprod\{Q=0\}\coprod\{Q>0\}.\]
As $\{Q<0\}$ and $\{Q>0\}$ are both open subsets of $M$, we only need to show that in every connected component of the set $\{Q\neq 0\}$, $Q$ is a constant. Let us assume that $U$ is a connected open subset of $\{Q<0\}$. In this case, we know that $\varphi$ defines an almost complex structure $J$ compatible with $\omega$ and that $F(\varphi)=|\varphi|^2\hat\varphi$ on $U$. Since $\ud\varphi=0$, in the language of \cite{fei2021b}, we get Type IIA structure $(\omega,\varphi)$ on $M$. In \cite[pp. 791-792]{fei2021b} we have shown that $\ud\hat\varphi$ is a $(2,2)$-form with respect to $J$. Consequently, we know that
\[\ud F(\varphi)=\ud|\varphi|^2\wedge\hat\varphi+|\varphi|^2\ud\hat\varphi\]
is exactly the decomposition of $\ud F(\varphi)$ into its $(3,1)+(1,3)$ and $(2,2)$ components. Therefore $\ud F(\varphi)=0$ implies that both $\ud|\varphi|^2=0$ and $\ud\hat\varphi=0$, which further implies that $Q=-|\varphi|^4$ is a constant and $J$ is integrable.

The situation that $U$ is contained in $\{Q>0\}$ is parallel, see the analogous construction outlined in \cite{fei2015b}.
\end{proof}
To summarize, we have proved that
\begin{thm}\label{integ}
If $\varphi$ is integrable and $F$-integrable, then it is also $K$-integrable and $Q$-integrable. Moreover, $\varphi$ is either pointwise stable or pointwise unstable. It is impossible for $\varphi$ to be stable at some points while unstable at other places. 
\end{thm}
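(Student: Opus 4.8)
The plan is to combine the two theorems just proved (integrability + $F$-integrability $\Rightarrow$ $K$-integrability, and $\Rightarrow$ $Q$-integrability) with the algebraic stratification of $\bigwedge^3_0 V^*$ into orbits. The $K$- and $Q$-integrability claims are immediate from Corollary and Theorem above, so the only genuinely new assertion is the dichotomy: under the stated hypotheses, $\varphi$ cannot be pointwise stable somewhere and pointwise unstable somewhere else. Since $Q$-integrability already forces $Q(\varphi)$ to be a (globally) constant function on the connected manifold $M$, the stable-vs-unstable dichotomy is nearly automatic: if $Q(\varphi)\equiv c$ with $c\neq 0$ then $\varphi$ lies pointwise in $\clo_-$ (if $c<0$) or in $\clo_+$ (if $c>0$) everywhere, hence is pointwise stable everywhere; and if $c=0$ then $\varphi$ lies pointwise in $\clo_0\coprod\clo_1\coprod\clo_3\coprod\clo_6$ everywhere, i.e.\ is pointwise unstable everywhere. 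This uses only Proposition characterizing $\{Q(\varphi)=0\}=\clo_0\coprod\clo_1\coprod\clo_3\coprod\clo_6$ together with the fact that $Q(\varphi)>0$ defines $\clo_+$ and $Q(\varphi)<0$ defines $\clo_-$ among primitive forms.

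First I would invoke Corollary (integrable $+$ $F$-integrable $\Rightarrow$ $K$-integrable) and the preceding Theorem (integrable $+$ $F$-integrable $\Rightarrow$ $Q$-integrable) verbatim to get the first sentence of the statement. Then I would spell out that $Q$-integrability means $Q(\varphi)$ is constant on $M$, say equal to $c\in\rr$. Splitting into the cases $c<0$, $c>0$, $c=0$ and applying the orbit description from Proposition (parts (1)--(4)) gives the pointwise orbit membership: $\varphi(p)\in\clo_-$ for all $p$, resp.\ $\varphi(p)\in\clo_+$ for all $p$, resp.\ $\varphi(p)\in\clo_0\coprod\clo_1\coprod\clo_3\coprod\clo_6$ for all $p$. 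In the first two cases $\varphi$ is pointwise stable everywhere; in the last case it is pointwise unstable everywhere. This yields exactly the claimed ``either--or.''

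The one subtlety worth a sentence is that being ``stable'' or ``unstable'' is a $\mathrm{GL}(V)$-notion while $\clo_\pm\cap\bigwedge^3_0V^*$ further decompose into the $\mathrm{Sp}$-orbits $\clo_\pm^{\pm}(\mu)$ and $\clo_0^\pm$; the dichotomy being asserted here is only at the $\mathrm{GL}$ level (stable vs.\ the hypersurface $Q=0$), so no finer rigidity is claimed and none is needed. In fact one can say a little more: on the open set $\{Q<0\}$ the proof of the previous theorem shows $J(\varphi)$ is integrable and $|\varphi|^2$ is constant, so $\varphi$ even lies in a single $\mathrm{Sp}$-orbit $\clo_-^{+}(\mu)$ or $\clo_-^{-}(\mu)$ with fixed $\mu$ on each connected component; I would mention this as a remark but it is not required for the statement.

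I do not expect a real obstacle here: every ingredient is already established in the excerpt, and the argument is essentially bookkeeping with the orbit table and the constancy of $Q$. The only place to be careful is to phrase the $c=0$ case so that it genuinely covers all four unstable orbits simultaneously (the pointwise orbit type may jump among $\clo_0,\clo_1,\clo_3,\clo_6$ as $p$ varies, and that is allowed — what is excluded is jumping between $\{Q=0\}$ and $\{Q\neq 0\}$), and to note that connectedness of $M$ is what upgrades ``$Q$ locally constant'' to ``$Q$ globally constant.''
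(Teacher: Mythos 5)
Your proposal is correct and follows essentially the same route as the paper: Theorem \ref{integ} is stated there as a summary of the preceding Corollary ($K$-integrability) and Theorem ($Q$-integrability), with the stable/unstable dichotomy coming exactly as you argue, from the constancy of $Q(\varphi)$ on the connected manifold $M$ together with the characterization of the stable orbits by $Q>0$ or $Q<0$ and of the unstable locus by $Q=0$. Your added remarks on connectedness and on the orbit type being allowed to jump among $\clo_0,\clo_1,\clo_3,\clo_6$ when $Q\equiv 0$ are consistent with the paper's subsequent remark.
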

\begin{rmk}
There are examples of $\varphi$ being both integrable and $F$-integrable with $Q(\varphi)=0$ everywhere such that $\varphi$ belongs to different non-stable orbits at different points.
\end{rmk}

\begin{rmk}
Being both integrable and $F$-integrable is a very natural condition to impose. For example, when $\varphi$ belongs to the orbit $\clo_-^+(\mu)$ pointwise, being both integrable and $F$-integrable is equivalent to that the associated metric $q(\omega,\varphi)$ is both K\"ahler and Ricci-flat. If the underlying manifold is a K\"ahler Calabi-Yau 3-fold, such a $\varphi$ can always be found by running the Type IIA flow with the real part of any holomorphic volume form as initial data, see \cite[Theorem 9]{fei2021b}. This condition also fits well into what Hitchin calls the ``nonlinear Hodge theory''. In fact, on a closed symplectic 6-manifold $(M,\omega)$, one can introduce the a functional $I$ on the space of closed 3-forms within a fixed cohomology class:
\[I(\varphi)=\int_MQ(\varphi)\frac{\omega^3}{3!}.\]
One can show that the critical points of $I$ are exactly those 3-forms that are both integrable and $F$-integrable. Therefore it is naturally to introduce the following definition.
\begin{dfn}
We say a primitive 3-form on $(M,\omega)$ is $F$-harmonic if it is both integrable and $F$-integrable.
\end{dfn}
\end{rmk}
 
\section{Lagrangian foliations and degeneration of Calabi-Yau structures}

In this section, we examine how certain Lagrangian foliations can be regarded as degeneration of Calabi-Yau structures and discuss its potential interactions with the Stroinger-Yau-Zaslow (SYZ) conjecture \cite{strominger1996}.

Roughly speaking, the SYZ conjecture predicts that near the large complex structure limit, which is a place where the complex structure of a Calabi-Yau manifold has maximal degeneration, the Calabi-Yau manifold admits a fibration of special Lagrangian tori with singularities. Moreover, the base of the fibration carries a pair of dual Hessian structures (with singularities!) such that the local potentials solve a real Monge-Amp\`ere equation. The SYZ conjecture has served as a guiding principle to understand the mirror symmetry from a geometric point of view. There has been numerous work devoted to various aspects of the SYZ conjecture. We recommend Yang Li's survey \cite{li2022b} and references therein for its recent developments.

The usual approach to the SYZ conjecture factors through the semi-flat geometry, which serves as an asymptotic model of the Ricci-flat K\"ahler metrics near the large complex structure limit. In this section, we shall consider a weaker version of the SYZ picture, namely how a Lagrangian foliation naturally arises from certain degeneration of complex structures on a 6-dimensional Calabi-Yau manifold, where semi-flatness is an automatic consequence of the special structures of ambient geometry.

Let $(M,\omega)$ be a fixed compact symplectic 6-manifold with vanishing first Chern class. Let $J_t$ be a family of integrable complex structures on $M$ that are compatible with $\omega$. Moreover, we require that $(M,J_t)$ has holomorphically trivial canonical bundle so we get a family of Calabi-Yau structures. Such a family can be obtained by applying the Moser's trick to a usual polarized degeneration of Calabi-Yau manifolds. Let $\Omega_t=\varphi_t+\sqrt{-1}\hat\varphi_t$ be a family of holomorphic volume forms on $(X,J_t)$. From Section 2, we know that all the information encoded in the degeneration of complex structures can be recovered from the family $\varphi_t$, which is a family of real primitive 3-forms on $M$ belonging to the orbit $\clo_-^+(\mu)$ pointwise. In fact, we shall further impose that $\varphi_t$ is Ricci-flat, meaning that we have $\ud\hat\varphi_t=0$ and $|\varphi_t|^2$ is a constant on $M$. This can always be achieved by running the Type IIA flow \cite[Theorem 9]{fei2021b}. Consequently $F(\varphi_t)=|\varphi_t|^2\hat\varphi_t$ is also $\ud$-closed.

Suppose the limit $\lim_{t\to\infty}\varphi_t=\varphi_\infty$ exists smoothly on $M$, where $\varphi_\infty$ is a primitive 3-form belonging to the orbit $\clo_0^+$ or $\clo_3$ pointwise on an open dense subset $U$ of $M$. Since $\varphi_t$ is $F$-harmonic and the convergence is smooth, we see that $\varphi_\infty$ is also $F$-harmonic. In view of Proposition \ref{cases} and Remark \ref{rmk}, we know that there is canonically a Lagrangian foliation associated to $\varphi_\infty$ on $U$, which can be viewed as a Lagrangian foliation with singularities on $M$.

\begin{ex}
Let $T=\rr^2/\zz^2$ be the standard 2-torus equipped with the symplectic form $\omega=\ud x\wedge\ud y$. For any $\tau$ in the upper half plane, consider the complex structure $J_\tau$ on $T$ defined by the holomorphic $(1,0)$-form $\ud x+\tau\ud y$ on $T$. It is not hard to see that as a Riemann surface, $(T,J_\tau)$ is biholomorphic to $\ccc/(\zz+\zz\tau)$, the elliptic curve with period $\tau$. For this reason we shall write $E_\tau$ to denote the Calabi-Yau 1-fold $(T,\omega,J_\tau)$. 

Now let $M_\tau=E_\tau\times E_\tau\times E_\tau$ with product complex and symplectic structures. It is easy to see that $M_\tau$ is a Calabi-Yau 3-fold with K\"aher Ricci-flat metric. Moreover, it is well-known that the family $M_\tau$ with $\tau\to\sqrt{-1}\infty$ is a model of large complex structure limit. For example, it is indicated in \cite{leung2005} that under the limit $\tau\to\sqrt{-1}\infty$, the geometry of $M_\tau$ collapses to a 3-dimensional base.

Now let us consider the following holomorphic volume form $\Omega_\tau$ on $M_\tau$:
\[\Omega_\tau=\frac{1}{\tau^2}(\ud x^1+\tau\ud y^1)\wedge(\ud x^2+\tau\ud y^2)\wedge(\ud x^3+\tau\ud y^3),\]
with $\varphi_\tau=\textrm{Re}~\Omega_\tau$. In this set-up we have
\[\lim_{\tau\to\sqrt{-1}\infty}\varphi_\tau=\ud x^1\wedge\ud y^2\wedge\ud y^3+\ud x^2\wedge\ud y^3\wedge\ud y^1+\ud x^3\wedge\ud y^1\wedge\ud y^2=:\varphi_\infty\]
is a member of $\clo_3^+$. Associated to $\varphi_\infty$ there is a natural Lagrangian foliation 
\[L=\ker K_{\varphi_\infty}=\spann\left\{\frac{\p}{\p x^1},\frac{\p}{\p x^2},\frac{\p}{\p x^3}\right\}.\]
In fact, in this example $L$ is a Lagrangian torus fibration over a 3-torus base.
\end{ex}

The case that $\varphi_\infty$ belonging to the $\clo_0^+$ is the more desired situation. Because in this case, the other sequence $F(\varphi_t)$ satisfies
\[\lim_{t\to\infty} F(\varphi_t)=F(\varphi_\infty),\]
where $F(\varphi_\infty)$ belongs to the orbit $\clo_3$. 

For this reason, to understand the degeneration of Calabi-Yau structures, we shall focus on the geometry associated to the triple $(M,\omega,\varphi)$, where $(M,\omega)$ is a symplectic 6-manifold, and $\varphi$ is a primitive 3-form on $M$ belonging to the orbit $\clo_0^+$ pointwise that is also $F$-harmonic.

Summarizing results from Proposition \ref{cases} and Theorem \ref{integ}, we have the following basic description of some fundamental geometric structures associated to $(M,\omega,\varphi)$.
\begin{prop}
Let $(M,\omega)$ be a symplectic 6-manifold and $\varphi$ an $F$-harmonic primitive 3-form on $M$ belonging to the orbit $\clo_0^+$ pointwise. Then $K=K(\varphi)$ is a smooth section of $\endo~TM$ such that $K^2=0$ and $\cll:=\ker K=\img~K$ defines a Lagrange foliation on $(M,\omega)$. In addition, the Nijenhuis tensor $N_K$ of $K$ vanishes, namely
\bea
N_K(X,Y)=K[KX,Y]+K[X,KY]-[KX,KY]=0\label{vanishing}
\eea
for any vector fields $X$ and $Y$. The natural bilinear symmetric form $g(\cdot,\cdot)=\omega(\cdot,K\cdot)$ induces a Riemannian metric on the vector bundle $TM/\cll$. The bundle $TM/\cll$ is canonically oriented so the volume form of the above Riemannian metric is exactly given by $\dfrac{1}{\sqrt{2}}F(\varphi)$. Through the bundle isomorphism $K:TM/\cll\cong \cll$, we also get a natural Riemannian metric $g_\cll$ with orientation on the Lagrangian foliation $\cll$ that
\[g_\cll(X,Y)=\omega(K^{-1}X,Y)\]
for any vector fields tangent to $\cll$. Clearly the definition of $g_\cll$ is independent of the choice of preimage $K^{-1}X$ of $X$ since $\cll$ is Lagrangian. 
\end{prop}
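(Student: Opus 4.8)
The plan is to assemble the statement from the pointwise algebra of Section~2 and the integrability results of Section~3; the only genuine computation will be the normalization constant $\tfrac1{\sqrt2}$ appearing in the transverse volume form.

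First I would extract the purely algebraic conclusions. Since $\varphi$ lies in $\clo_0^+\subset\clo_0$ at every point, the proposition characterizing the zero locus of $Q$ gives $Q(\varphi)\equiv0$, so the identity $K\circ K=\tfrac14 Q(\varphi)\,\id$ forces $K^2=0$; in particular $\img K\subseteq\ker K$. Because the orbit type of $\varphi$ is constant, all the ranks in the dimension tables of Section~2 are constant, so $\ker K$ and $\img K$ are smooth rank-$3$ subbundles of $TM$, and the proposition on $\clo_0^\pm$ upgrades this to $\ker K=\img K=\ker F(\varphi)=:\cll$, with $\cll$ Lagrangian for $\omega$ fibrewise. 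Because $\varphi$ is $F$-harmonic it is integrable and $F$-integrable, so the corollary to the Nijenhuis identity gives $K$-integrability, i.e.\ $N_K=0$; and since $K^2=0$ the general Nijenhuis expression collapses to the one displayed in the statement. Finally $F$-integrability says $\ud F(\varphi)=0$, and since $F(\varphi)$ is everywhere of orbit type $\clo_3$ its kernel $\cll$ is a rank-$3$ distribution cut out by a closed, pointwise-decomposable $3$-form; the observation recorded in Remark~\ref{rmk} (equivalently: write $F(\varphi)=\theta^1\wedge\theta^2\wedge\theta^3$ locally, expand $\ud F(\varphi)=0$, and read off that the transverse part of each $\ud\theta^i$ vanishes) shows $\cll$ is involutive, hence a Lagrangian foliation.

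Next comes the transverse metric and orientation. The form $g(\cdot,\cdot)=\omega(\cdot,K\cdot)$ is exactly $q(\omega,\varphi)$, hence symmetric, and $\cll$ lies in its radical, since $g(v_1,v_2)=0$ for $v_2\in\ker K$ while $g(Kw,v_2)=\omega(Kw,Kv_2)=0$ because $\img K=\cll$ is Lagrangian. By Proposition~\ref{cases}(4) the form induced on each fibre of $TM/\cll$ is positive definite, so $g$ descends to a smooth Riemannian metric on $TM/\cll$. The $3$-form $F(\varphi)$ is nowhere zero (its values lie in $\clo_3$) and annihilates $\cll$, so it is a nowhere-vanishing section of $\bigwedge^3(TM/\cll)^*$; calling a frame of $TM/\cll$ positive precisely when $F(\varphi)$ evaluates positively on it gives the canonical orientation. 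It then remains to identify $\vol_g$ with $\tfrac1{\sqrt2}F(\varphi)$ in $\bigwedge^3(TM/\cll)^*$. Both are positive multiples of the same generator by construction, so this is a pointwise comparison of a single scalar; since $K$, $F$ and $q(\omega,\varphi)$ are $\Sp(V,\omega)$-equivariant and $\clo_0^+$ is a single $\Sp(V,\omega)$-orbit, it suffices to check it on the Section~2 normal form, where $K(\pt_{y^j})=-2\pt_{x^j}$ and $F(\varphi)=4\,\ud y^1\wedge\ud y^2\wedge\ud y^3$ give $g([\pt_{y^j}],[\pt_{y^k}])=2\delta_{jk}$; the orthonormal coframe is then $\sqrt2$ times the frame dual to $[\pt_{y^1}],[\pt_{y^2}],[\pt_{y^3}]$, whence $\vol_g=(\sqrt2)^3\cdot\tfrac14 F(\varphi)=\tfrac1{\sqrt2}F(\varphi)$. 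I expect this bookkeeping of constants to be the one delicate point of the whole statement.

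Lastly, $K$ kills $\cll=\ker K$ and surjects onto $\cll=\img K$, so it induces a bundle isomorphism $\bar K\colon TM/\cll\to\cll$; transporting $g$ together with the orientation along $\bar K$ produces $g_\cll$. For $X,Y$ tangent to $\cll$, writing $X=K(K^{-1}X)$ and $Y=K(K^{-1}Y)$ gives $g_\cll(X,Y)=g(K^{-1}X,K^{-1}Y)=\omega(K^{-1}X,K(K^{-1}Y))=\omega(K^{-1}X,Y)$, and this is independent of the chosen preimage $K^{-1}X$ because two preimages differ by an element of $\ker K=\cll$, on which $\omega(\cdot,Y)$ vanishes since $Y\in\cll$ and $\cll$ is Lagrangian. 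Symmetry and positive definiteness of $g_\cll$ are then inherited from those of $g$ on $TM/\cll$.
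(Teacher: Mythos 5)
Your proposal is correct and follows essentially the same route the paper intends: the statement is assembled from the pointwise algebra of Section 2 (in particular Proposition \ref{cases}(4) and the identity $K^2=\tfrac14 Q\,\id$), from Theorem \ref{integ} giving $N_K=0$, and from the observation of Remark \ref{rmk} that $F$-integrability alone makes $\cll=\ker F(\varphi)$ involutive. Your explicit normal-form verification of the constant $\tfrac1{\sqrt2}$ (via $K(\pt_{y^j})=-2\pt_{x^j}$, $F(\varphi)=4\,\ud y^{123}$, and $\Sp(V,\omega)$-equivariance) correctly supplies the one computation the paper asserts without detail.
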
 
It turns out that there are very rich geometric structures on the leaves of the Lagrangian foliation $\cll$. In particular we have the following theorem.
\begin{thm}\label{structure}
Let $L$ be a leaf, namely a maximal integral (immersed) submanifold, of the Lagrangian foliation $\cll$. Then $L$ is naturally oriented. Moreover, $L$ carries a canonical affine structure such that the natural Riemannian metric induced from $g_\cll$ is a Hessian metric with parallel volume form. In other words, in any local affine coordinate system on $L$, the metric is the Hessian of a locally defined convex function $h$ such that it solves a real Monge-Amp\`ere equation with constant right hand side.
\end{thm}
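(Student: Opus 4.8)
The plan is to exploit the vanishing Nijenhuis tensor $N_K = 0$ together with $K^2 = 0$ to produce the affine structure, and then use $F$-harmonicity to see that the Hessian potential's Monge--Amp\`ere determinant is forced to be constant. First I would set up, around a point of $L$, a local frame adapted to the splitting: since $\cll = \ker K = \img~K$ is a Lagrangian foliation, choose Darboux-type coordinates $(x^1,x^2,x^3,y^1,y^2,y^3)$ in which $\omega = \sum \ud x^j \wedge \ud y^j$ and $\cll = \spann\{\pt_{x^1},\pt_{x^2},\pt_{x^3}\}$; the leaves are locally the level sets of $(y^1,y^2,y^3)$ (this uses that $\cll$ is an integrable Lagrangian distribution, which holds by Remark~\ref{rmk} since $\img~K = \ker F(\varphi)$ and $\varphi$ is $F$-integrable). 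In such coordinates $K$ is a nilpotent endomorphism mapping $\spann\{\pt_{y^j}\}$ isomorphically onto $\spann\{\pt_{x^j}\}$ and killing $\spann\{\pt_{x^j}\}$, so $K(\pt_{y^j}) = \sum_k A^k_j \pt_{x^k}$ for an invertible matrix-valued function $A$; the symmetry of $g = \omega(\cdot,K\cdot)$ from the Proposition preceding Theorem~\ref{structure} forces $A$ to be symmetric.

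Next I would extract the affine (flat torsion-free) connection on each leaf. The natural candidate is a Bott-type connection: for vector fields $X, Y$ tangent to $\cll$, lift $Y$ to $\tilde Y$ with $K\tilde Y = Y$ (possible since $\img~K = \cll$) and set $\nabla_X Y := K[X, \tilde Y]$, checking well-definedness (independence of the lift modulo $\cll$ and tensoriality in $X$) precisely using $K^2 = 0$ and $N_K = 0$: indeed $N_K(X,\tilde Y) = 0$ with $X \in \cll$ (so $KX = 0$) gives $K[X, K\tilde Y] = 0$... more carefully, one wants $K[X,\tilde Y]$ to depend only on $K\tilde Y$, and the obstruction to changing $\tilde Y \mapsto \tilde Y + Z$ with $Z \in \cll$ is $K[X,Z]$, which vanishes because $[X,Z]$ is tangent to the (integrable) foliation $\cll = \ker K$. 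Torsion-freeness reduces to $K[X,\tilde Y] - K[Y,\tilde X] = K[\tilde X, \tilde Y]$ modulo $\cll$, which is exactly a rearrangement of $N_K(\tilde X, \tilde Y) = 0$ together with $K^2 = 0$; flatness similarly follows from the Jacobi identity and $N_K = 0$. In the coordinates above this $\nabla$ should have Christoffel symbols expressible through $\pt A$, and one identifies the affine coordinates as those in which $A$ becomes a Hessian; concretely, the flatness of $\nabla$ means there exist functions $u^k(x,y)$ on the leaf whose differentials are $\nabla$-parallel, and these $u^k$ are the affine coordinates. I expect this to be the main obstacle: verifying cleanly that $\nabla$ is flat and torsion-free purely from $N_K = 0$, $K^2 = 0$, and the integrability of $\cll$, and then matching the parallel coordinates to a potential.

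Then I would produce the Hessian potential. With affine coordinates $u^k$ on the leaf $L$, the metric $g_\cll(X,Y) = \omega(K^{-1}X, Y)$ in these coordinates has components $g_{kl}$, and I claim closedness of a suitable 1-form gives $g_{kl} = \pt^2 h/\pt u^k \pt u^l$ for a local function $h$: the relevant integrability condition $\pt_{u^m} g_{kl} = \pt_{u^k} g_{ml}$ should follow from $\ud\omega = 0$ (i.e.\ $\omega$ closed) combined with $\nabla$ being torsion-free and compatible with the affine structure --- this is the standard equivalence between Hessian structures and flat connections whose metric is "Codazzi". Finally, for the Monge--Amp\`ere statement: the volume form of $g_\cll$ on $L$ is, by the preceding Proposition, the one transported via $K$ from the volume form $\tfrac{1}{\sqrt 2}F(\varphi)$ on $TM/\cll$, and $F$-harmonicity gives $\ud F(\varphi) = 0$; one shows that in the affine coordinates $u^k$ this volume form is $\det(\pt^2 h/\pt u^k \pt u^l)^{1/2}$ times the constant-coefficient form $\ud u^1 \wedge \ud u^2 \wedge \ud u^3$ --- wait, more precisely the claim "parallel volume form" means $\det(\pt^2 h / \pt u^k \pt u^l)$ is constant on $L$, which is the real Monge--Amp\`ere equation with constant right-hand side. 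I would derive this by showing that the restriction of $F(\varphi)$ to directions transverse to $\cll$, pushed through $K$, equals a constant multiple of $\ud u^1 \wedge \ud u^2 \wedge \ud u^3$ along $L$ --- using $\ud F(\varphi) = 0$ to kill the variation along the leaf --- so that the $g_\cll$-volume, which is the Hessian determinant's square root times the coordinate volume, is forced constant. The step I am least sure about is the precise bookkeeping translating "$\ud F(\varphi) = 0$ restricted along a leaf" into "$\det \mathrm{Hess}(h)$ constant"; I would handle it by computing everything in the explicit $(x,y)$ coordinates, where $F(\varphi)$ is a multiple of $\ud y^1 \wedge \ud y^2 \wedge \ud y^3$ and $K$ is encoded by the symmetric matrix $A$, reducing the claim to an identity about $A$ and $\det A$ that $\ud F(\varphi) = 0$ makes transparent.
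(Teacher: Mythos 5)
Your proposal is correct and follows essentially the same route as the paper: you define the same partial connection $D_XY=K[X,K^{-1}Y]$, verify well-definedness via integrability of $\cll$, get torsion-freeness from $N_K=0$ and flatness from the Jacobi identity, obtain the Hessian (Codazzi) property from $\ud\omega=0$, and deduce the constant Monge--Amp\`ere right-hand side from $\ud F(\varphi)=0$ via the $K$-transported volume form. The only difference is cosmetic: you work in Weinstein--Darboux coordinates (as in Remark \ref{foliationcoor}) where the paper's argument is coordinate-free, and you leave the leaf orientation (coming from $F(\varphi)$ orienting $TM/\cll$) implicit.
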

\begin{proof}
Since $\cll=\ker F(\varphi)$, we know that $F(\varphi)$ naturally defines an orientation on the vector bundle $TM/\cll$. Therefore we also get a natural orientation on $\cll$, hence every leaf of $\cll$ is naturally oriented as well.

Now let us introduce a partial connection $D$ on $\cll$ as follows: for any vector fields $X$ and $Y$ that are tangent to $\cll$ everywhere, we define
\bea
D_XY=K[X,K^{-1}Y].\label{dualbott}
\eea
Suppose we have two choices $Z_1$ and $Z_2$ of $K^{-1}Y$, since $K(Z_1)=K(Z_2)=Y$, we know that $Z_1-Z_2$ is a section of $\ker K=\cll$. As $\cll$ is integrable and $X$ is tangent to $\cll$, so $[X,Z_1-Z_2]$ is also tangent to $\cll$ hence $K[X,Z_1-Z_2]=0$. Therefore $D$ is well defined as $D_XY$ does not depend on the choice of $K^{-1}Y$. One can check that $D$ satisfies the usual properties of a connection. For any function $f$, we have
\[D_{fX}Y=K[fX,K^{-1}Y]=K(f[X,K^{-1}Y])-(K^{-1}Yf)KX=fD_XY,\]
and
\[\begin{split}
D_X(fY)&=K[X,K^{-1}(fY)]=K[X,fK^{-1}Y]=K(f[X,K^{-1}Y])+Xf\cdot K(K^{-1}Y)\\
&=fD_XY+Xf\cdot Y.\end{split}\]
Moreover, by vanishing of the Nijenhuis tensor (\ref{vanishing}), we have
\[\begin{split}D_XY-D_YX=&K([K(K^{-1}X),K^{-1}Y]+[K^{-1}X,K(K^{-1}Y)])=[K(K^{-1}X),K(K^{-1}Y)]\\
=&[X,Y].\end{split}\]
The Jacobi identity further suggests that
\[\begin{split}&D_X(D_YZ)-D_Y(D_XZ)-D_{[X,Y]}Z\\
=&K[X,[Y,K^{-1}Z]]-K[Y,[X,K^{-1}Z]]-K[[X,Y],K^{-1}Z]\\
=&0.\end{split}\]
Consequently the restriction of $D$ to each leaf $L$ is a torsion-free and flat connection on $TL$, hence we get a canonical affine structure on every leaf.

Let $g_L$ be the induced Riemannian metric from $g_\cll$. Then for any tangent vectors $Y$ and $Z$ of $L$, we have $g_L(Y,Z)=\omega(K^{-1}Y,Z)$,
hence
\[D_Xg_L(Y,Z)=X\omega(K^{-1}Y,Z)-\omega([X,K^{-1}Y],Z)-\omega(K^{-1}Y,D_XZ).\]
On the other hand, $\omega$ is a symplectic form, so
\[\begin{split}0=\ud\omega(X,K^{-1}Y,Z)=&X\omega(K^{-1}Y,Z)+K^{-1}Y\omega(Z,X)+Z\omega(X,K^{-1}Y)\\
&-\omega([X,K^{-1}Y],Z)-\omega([K^{-1}Y,Z],X)-\omega([Z,X],K^{-1}Y),\end{split}\]
which can be translated to
\[D_Xg_L(Y,Z)=D_Zg_L(Y,X),\]
hence $Dg_L$ is a totally symmetric tensor. In a local affine coordinate system $\{x^1,x^2,x^3\}$, we can write $g_L=h_{ij}\ud x^i\otimes\ud x^j$. It follows that 
\[\frac{\p}{\p x^k}h_{ij}=\frac{\p}{\p x^j}h_{ik}\]
for any indices $i,j$ and $k$. Therefore locally we can find a potential function $h$ such that $h_{ij}=\dfrac{\p^2h}{\p x^i\p x^j}$ hence $g_L$ is a Hessian metric. Let $\vol_{g_L}$ be the volume form of the metric $g_L$. By a pointwise computation using the normal form, we have
\[\vol_{g_L}(X,Y,Z)=\frac{1}{\sqrt{2}}F(\varphi)(K^{-1}X,K^{-1}Y,K^{-1}Z).\]
Since $\ud F(\varphi)=0$ and $\cll=\ker F(\varphi)$, we get
\[\begin{split}&WF(\varphi)(K^{-1}X,K^{-1}Y,K^{-1}Z)-F(\varphi)([W,K^{-1}X],K^{-1}Y,K^{-1}Z)\\
&-F(\varphi)(K^{-1}X,[W,K^{-1}Y],K^{-1}Z)-F(\varphi)(K^{-1}X,K^{-1}Y,[W,K^{-1}Z])=0\end{split}\]
for any $X,Y,Z,W$ tangent to $\cll$. This equation translates to
\[D_W\vol_{g_L}(X,Y,Z)=0.\]
Therefore we know that $\vol_{g_L}$ is parallel under $D$.
\end{proof}

\begin{rmk}\label{foliationcoor}
It is not hard to write down a locally $D$-parallel frame of $TL$. By a theorem of Weinstein \cite{weinstein1971}, near any point $p$ of $M$, there exists locally a Darboux coordinate system $\{x^1,y^1,x^2,y^2,x^3,y^3\}$ such that
\[\omega=\ud x^1\wedge\ud y^1+\ud x^2\wedge\ud y^2+\ud x^3\wedge\ud y^3,\]
and the leaves of $\cll$ are locally defined by $y=(y^1,y^2,y^3)$ are constants. In this local coordinates, $\left\{K\dfrac{\p }{\p y^j}\right\}_{j=1}^3$ gives rise to a $D$-parallel frame on $TL$.
\end{rmk}

On each leaf $L$, the pair $(g_L,D)$ forms what is known as a Hessian structure \cite[Chap. 2]{shima2007}. By performing the Legendre transform locally, one can generate a dual Hessian structure $(g_L,D')$ on $L$, where $D'$ is a torsion-free and flat connection on $TL$ such that
\[D+D'=2\nabla,\]
where $\nabla$ is the Levi-Civita connection associated to $g_L$. The same metric $g_L$ is also Hessian with respect to $D'$ and the volume form $\vol_{g_L}$ is also parallel under $D'$. On the other hand, for any Lagrangian foliation, there is another canonical affine structure on its leaves characterized by the Bott (partial) connection $\nabla^B$ \cite{bott1972}. In this setting, see for example \cite[Chap. 4]{hamilton2023}, the Bott (partial) connection satisfies
\bea
\omega(\nabla^B_XY,Z)=X\omega(Y,Z)+\omega([X,Z],Y)\label{bott}
\eea
for any $X,Y$ tangent to $\cll$ and arbitrary vector field $Z$. It is not surprising to expect the following theorem:
\begin{thm}
The Bott connection $\nabla^B$ is dual to $D$ in the sense of Hessian structures, namely $\nabla^B=D'$. In particular, with respect to the affine structure determined by $\nabla^B$, the metric $g_L$ on each leaf $L$ is Hessian and its local potential functions all solve a real Monge-Amp\`ere equation with constant right hand side.
\end{thm}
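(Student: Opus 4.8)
The plan is to identify $\nabla^B$ with the Hessian-dual connection $D'$ by checking the single relation that characterizes a dual connection, thereby avoiding any explicit description of the Levi--Civita connection $\nabla$. Recall that, for a leaf $L$ equipped with the Hessian structure $(g_L,D)$ from Theorem \ref{structure}, the dual connection $D'$ is the \emph{unique} affine connection on $TL$ with
\[
X\,g_L(Y,Z)=g_L(D_XY,Z)+g_L(Y,D'_XZ)
\]
for all $X,Y,Z$ tangent to $L$ (existence and uniqueness follow from nondegeneracy of $g_L$), and once one knows that $D$ and $D'$ are both torsion-free and $g_L$-Codazzi this is equivalent to $D+D'=2\nabla$. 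So it suffices to verify that $\nabla^B$ satisfies the displayed compatibility relation on every leaf.

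As a preliminary step I would record that (\ref{bott}) really defines an honest connection on each leaf. The key input is that $\cll$ is Lagrangian: for $Z$ tangent to $\cll$ the right-hand side of (\ref{bott}) vanishes — $\omega(Y,Z)=0$ because $\cll$ is Lagrangian, and $\omega([X,Z],Y)=0$ because $[X,Z]$ stays tangent to the integrable $\cll$ — so $\nabla^B_XY\in\cll^\omega=\cll$ is well defined by nondegeneracy of $\omega$; tensoriality in $X$ and torsion-freeness of $\nabla^B$ along $L$ then follow from $\omega|_\cll=0$ together with $\ud\omega=0$ (this is essentially Weinstein's flat affine structure on Lagrangian leaves). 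In particular $\nabla^B$ restricts to a torsion-free flat connection on $TL=\cll|_L$, and $\nabla^B_XZ$ is again tangent to $L$, so all pairings below make sense.

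The core of the argument is a short computation in terms of $\omega$ and $K$. By definition $g_L(Y,Z)=\omega(K^{-1}Y,Z)$, unambiguous since $Z$ is tangent to the Lagrangian $\cll$; likewise, since by (\ref{dualbott}) the vector $[X,K^{-1}Y]$ is a $K$-preimage of $D_XY$, one has $g_L(D_XY,Z)=\omega([X,K^{-1}Y],Z)$. Applying (\ref{bott}) with its arbitrary slot occupied by $K^{-1}Y$ gives
\[
\omega(\nabla^B_XZ,K^{-1}Y)=X\omega(Z,K^{-1}Y)+\omega([X,K^{-1}Y],Z),
\]
so, using antisymmetry of $\omega$,
\[
g_L(Y,\nabla^B_XZ)=\omega(K^{-1}Y,\nabla^B_XZ)=X\omega(K^{-1}Y,Z)-\omega([X,K^{-1}Y],Z).
\]
Adding the two contributions, the terms $\omega([X,K^{-1}Y],Z)$ cancel and one is left with $g_L(D_XY,Z)+g_L(Y,\nabla^B_XZ)=X\omega(K^{-1}Y,Z)=X\,g_L(Y,Z)$, which is exactly the dual-connection relation; hence $\nabla^B=D'$ on each leaf.

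The concluding assertions of the theorem are then immediate from the general properties of dual Hessian structures recalled just before the statement: as the dual of the Hessian structure $(g_L,D)$, the connection $\nabla^B$ is torsion-free and flat, the same metric $g_L$ is Hessian with respect to it, and the $D$-parallel volume form of $g_L$ remains $\nabla^B$-parallel; therefore in any $\nabla^B$-affine coordinates $g_L$ is the Hessian of a local convex potential solving a real Monge--Amp\`ere equation with constant right-hand side. I do not expect a genuine obstacle: the only delicate points are the well-definedness of $K^{-1}$ (handled by the Lagrangian condition), bookkeeping of the antisymmetry signs, and feeding $K^{-1}Y$ into the correct (``arbitrary'') slot of (\ref{bott}).
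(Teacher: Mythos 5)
Your proposal is correct and follows essentially the same route as the paper: the paper's proof likewise reduces the statement to verifying the duality relation $Xg_L(Y,Z)=g_L(D_XY,Z)+g_L(Y,\nabla^B_XZ)$ and notes it follows directly from $g_L(\cdot,\cdot)=\omega(K^{-1}\cdot,\cdot)$ together with (\ref{dualbott}) and (\ref{bott}); you simply spell out the cancellation of the $\omega([X,K^{-1}Y],Z)$ terms and the well-definedness points (Lagrangian condition, tangency of $\nabla^B_XZ$) that the paper leaves implicit.
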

\begin{proof}
We only need to prove that
\[Xg_L(Y,Z)=g_L(D_XY,Z)+g_L(Y,\nabla^B_XZ)\]
for any vector fields $X,Y,Z$ tangent to $\cll$. This follows directly from that $g_L(\cdot,\cdot)=\omega(K^{-1}\cdot,\cdot)$ and the definitions of $D$ (\ref{dualbott}) and $\nabla^B$ (\ref{bott}).
\end{proof}
\begin{thm}
The natural metric $g_L$ on any leaf $L$ has nonnegative Ricci curvature. If the leaf $L$ is compact, then it must be a flat torus.
\end{thm}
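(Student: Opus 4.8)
The plan is to exploit the Hessian structure established in Theorem \ref{structure} together with the classical curvature formula for Hessian metrics. Recall that for a Hessian metric $g_L = h_{ij}\,\ud x^i\otimes\ud x^j$ with $h_{ij} = \p^2 h/\p x^i\p x^j$ in an affine coordinate system, the Levi-Civita Ricci curvature can be written in terms of the Koszul form and the Hessian of $\log\det(h_{ij})$. Since we have shown that the volume form $\vol_{g_L}$ is $D$-parallel, in an affine coordinate chart $\det(h_{ij})$ is a constant, so $\log\det(h_{ij})$ has vanishing Hessian. Feeding this into the Hessian curvature formula collapses the Ricci tensor to an expression that is manifestly a nonnegative quadratic form in the first derivatives of $h$ --- concretely, $\ric = \frac14 h^{kl} h^{mn}(\p_i h_{km})(\p_j h_{ln})\,\ud x^i\otimes\ud x^j$ up to the standard sign/normalization --- which gives $\ric \ge 0$ pointwise.

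The steps, in order, would be: (i) fix a local affine coordinate system on $L$ as in Theorem \ref{structure} and write $g_L$ as a Hessian metric with $\det(h_{ij})$ locally constant, using the parallel volume form; (ii) invoke (or quickly rederive) the Hessian-geometry Ricci formula --- this is standard, e.g. Shima \cite[Chap. 2]{shima2007} --- and observe that the term involving $\p_i\p_j\log\det(h)$ drops out; (iii) conclude that the surviving expression for $\ric$ is a sum of squares, hence $\ric \ge 0$; (iv) for the compact case, apply the Bochner technique / Myers-type reasoning: a compact Riemannian manifold with $\ric \ge 0$ that additionally carries a nontrivial parallel structure forces flatness. More directly, a compact Hessian manifold (one admitting a global flat torsion-free connection $D$ for which $g$ is Hessian) is automatically flat --- this is a theorem of Shima and also follows from the fact that the Koszul--Vinberg characteristic function argument, or Cheeger--Gromoll, forces the universal cover to split. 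Once $L$ is flat and compact it is a Bieberbach manifold, and since it additionally admits a global affine-parallel coordinate atlas (it is a complete flat affine manifold with parallel volume) it must be a torus.

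I would actually route the compact case through the cleanest available statement: by step (iii) we have $\ric \ge 0$; if moreover the sum of squares vanishes identically then $\p_i h_{jk} \equiv 0$, i.e. $g_L$ is flat. To force this on a compact leaf, integrate: $\int_L \ric \cdot (\text{trace against } g_L) $ can be related by Stokes, using that $D$ has no boundary terms on closed $L$ and that $\det(h)$ is globally well-defined up to locally constant transition (the volume form being globally parallel), to show the total "energy" $\int_L |Dg_L|^2 \,\vol_{g_L}$ vanishes. Hence $Dg_L = 0$, so $g_L$ is $D$-parallel, $D$ is metric and torsion-free, therefore $D = \nabla$, and since $D$ is flat $g_L$ is flat. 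A compact flat manifold with a complete parallel affine structure is a torus by Bieberbach together with the fact that the affine holonomy lies in the translation group.

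The main obstacle I anticipate is step (ii)/(iii): getting the Ricci formula in precisely the normalization that makes the nonnegativity transparent, and in particular being careful that it is genuinely the Levi-Civita Ricci curvature of $g_L$ (not a connection-dependent "Hessian curvature") that is being computed, since the theorem statement is about the Riemannian metric $g_L$. The global integration argument in the compact case is the second delicate point --- one must check that the relevant one-form whose divergence appears is globally defined on $L$, which hinges on the parallel volume form $\vol_{g_L} = \tfrac{1}{\sqrt 2}F(\varphi)|_{\cll}$ being a genuine global object (it is, since $F(\varphi)$ is globally defined and $\ud$-closed). Everything else is routine once the curvature identity is in hand.
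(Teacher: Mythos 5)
The first half of your proposal — writing $g_L$ in affine coordinates as a Hessian metric, using the $D$-parallel volume form to make $\det[h_{ij}]$ locally constant so that the $\p\p\log\det h$ term drops from the Hessian Ricci formula, and reading off $\ric\geq 0$ as a sum of squares — is exactly the paper's argument, so no issues there.

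The compact case is where there is a genuine gap. Your central claim is that Stokes' theorem yields $\int_L |Dg_L|^2\,\vol_{g_L}=0$, i.e.\ that the total scalar curvature $\int_L S\,\vol_{g_L}$ vanishes by an integration by parts. No such divergence identity is available: $S=\tfrac14|Dg_L|^2$ is a full contraction of the square of the totally symmetric tensor $h_{ijk}$, and since $\det h$ is constant this tensor is trace-free ($h^{jk}h_{ijk}=0$), so the only natural first-order vector field one could try to take the divergence of, $V^j=h^{jl}h^{ik}h_{ikl}$, vanishes identically; a direct computation of $\p_j\bigl(h^{jl}h^{ik}\p_i h_{kl}\bigr)$ just returns $0=0$, not $S$. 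The vanishing of the total scalar curvature is precisely the hard global input here, and the paper supplies it by first noting that a compact leaf must be a torus and then invoking the Schoen--Yau and Gromov--Lawson theorem that a torus carries no metric of nonnegative, somewhere-positive scalar curvature; that is a deep theorem, not something recoverable by the elementary integration you sketch. Your fallback citations do not close the gap either: ``a compact Hessian manifold is automatically flat'' is false as stated (compact quotients of divisible proper convex cones carry non-flat Hessian metrics), so at best one would need a version with the parallel-volume hypothesis and an actual reference or proof; and Cheeger--Gromoll gives no flatness from $\ric\geq 0$ on a compact manifold. Finally, note that your derivation of the torus conclusion (Bieberbach plus translational affine holonomy) sits downstream of the flatness you have not yet established, whereas the paper uses ``compact leaf of a Lagrangian foliation is a torus'' as the input, not the output. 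To repair your route you would either have to import the Schoen--Yau/Gromov--Lawson rigidity as the paper does, or prove completeness of the affine structure on a compact leaf and then appeal to J\"orgens--Calabi--Pogorelov type rigidity for $\det\mathrm{Hess}\,h=\mathrm{const}$ — either way, a substantial theorem is needed where your sketch currently has a Stokes argument that does not exist.
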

\begin{proof}
In an affine coordinate, the Ricci curvature $R_{jk}$ of a Hessian metric $g$ is given by
\bea
R_{jk}=\frac{1}{4}h^{st}h^{lp}(h_{jps}h_{klt}-h_{jkt}h_{pls}),\label{riccigen}
\eea
where $h$ is the local potential function for the Hessian metric $g$ and $h_{ijk}$ is the short-handed notation for $\dfrac{\p^3h}{\p x^i\p x^j\p x^k}$, the 3rd derivative of $h$ with respect to an affine coordinate system. If in addition that the volume form is parallel under the affine connection, then $\det[h_{pl}]$ is a constant, hence
\[\frac{\p}{\p x^s}\log\det[h_{pl}]=h^{lp}h_{pls}=0.\]
Plugging it in (\ref{riccigen}), we get
\bea
R_{jk}=\frac{1}{4}h^{st}h^{lp}h_{jps}h_{klt}
\eea
is semi-positive definite. In other words, for any vector field $X$, we have
\bea
\textrm{Ric}(X,X)=\frac{1}{4}|D_Xg|^2.\label{ricci}
\eea
It follows that the scalar curvature $S$ has the expression
\bea
S=\frac{1}{4}h^{st}h^{ik}h^{jl}h_{sil}h_{tkj}=\frac{1}{4}|Dg|^2\geq 0.\label{scalar}
\eea
As in Theorem (\ref{structure}) we have proved that for any leaf $L$, it has a Hessian structure with parallel volume form, so we know its Ricci curvature is nonegative from (\ref{ricci}).

Now suppose that $L$ is a compact leaf. It is well-known that $L$ must be a torus. By a famous theorem of Schoen-Yau \cite{schoen1979} and Gromov-Lawson \cite{gromov1980}, we know that any Riemannian metric on a torus with nonnegative scalar curvature must be flat.
\end{proof}
\begin{rmk}
In general, the two affine structures determined by $D$ and $\nabla^B$ are different. This follows from the fact that the metric $g_L$ on each leaf is in general not flat. See the example below.
\end{rmk}
\begin{ex}
Let $(N,g)$ be an oriented 3-dimensional Riemannian manifold. Over $N$, we consider its cotangent bundle $T^*N$, namely the bundle of 1-forms, and also the bundle of 2-forms $\bigwedge^2T^*N$. It is well-known that $T^*N$ carries a tautological 1-form, whose exterior derivative gives rise to the canonical symplectic structure $\omega$ on $T^*N$. Let $\{x^1,x^2,x^3\}$ be a local coordinate chart on $N$, and let $\{y_1,y_2,y_3\}$ be the corresponding coordinate in the cotangent direction, we know that $\omega$ takes the Darboux form
\[\omega=\ud x^1\wedge\ud y_1+\ud x^2\wedge\ud y_2+\ud x^3\wedge\ud y_3.\]
Similarly, the total space of $\bigwedge^2T^*N$ carries a tautological 2-form $\alpha$ defined as follows: Let $(p,\lambda)$ be a point in $\bigwedge^2T^*N$ where $p\in N$ and $\lambda\in\bigwedge^2T^*_pN$. For any $X,Y\in T_{(p,\lambda)}\bigwedge^2T^*N$, we define that
\[\alpha_{(p,\lambda)}(X,Y)=\lambda(\pi_*X,\pi_*Y),\]
where $\pi:\bigwedge^2T^*N\to N$ is the projection map. Over a local coordinate chart $\{x^1,x^2,x^3\}$ of $N$, any 2-form on $N$ can be written as $t^1\ud x^2\wedge\ud x^3+t^2\ud x^3\wedge\ud x^1+t^3\ud x^1\wedge\ud x^2$. If we use $\{x^1,t^1,x^2,t^2,x^3,t^3\}$ as a local coordinate chart on $\bigwedge^2T^*N$, we can express $\alpha$ as
\[\alpha=t^1\ud x^2\wedge\ud x^3+t^2\ud x^3\wedge\ud x^1+t^3\ud x^1\wedge\ud x^2.\]
By taking the exterior derivative, we get the canonical 3-form $\ud\alpha$ on $\bigwedge^2T^*N$:
\[\begin{split}\ud\alpha&=\ud(t^1\ud x^2\wedge\ud x^3+t^2\ud x^3\wedge\ud x^1+t^3\ud x^1\wedge\ud x^2)\\
&=\ud t^1\wedge\ud x^2\wedge\ud x^3+\ud t^2\wedge\ud x^3\wedge\ud x^1+\ud t^3\wedge\ud x^1\wedge\ud x^2.\end{split}\]

The Riemannian metric $g$ on $N$ provides us an isomorphism $T^*N\cong\bigwedge^2T^*N$ via the Hodge star operator. In the local coordinates above, we have
\[t^j=y_kg^{kj}\sqrt{\det g},\]
or
\[y_k=\frac{1}{\sqrt{\det g}}g_{kj}t^j.\]
Under this identification, we can think of $\omega$ as a symplectic form on $\bigwedge^2T^*N$ with the expression
\[\omega=\frac{g_{kj}}{\sqrt{\det g}}\ud x^k\wedge\ud t^j+t^j\ud x^k\wedge\ud\left(\frac{g_{kj}}{\sqrt{\det g}}\right).\]
It is not hard to find out that
\[\frac{\omega^3}{3!}=\frac{1}{\sqrt{\det g}}\ud x^1\wedge\ud t^1\wedge\ud x^2\wedge\ud t^2\wedge\ud x^3\wedge\ud t^3.\]
Let $r$ be the norm square function on $\bigwedge^2T^*N$, namely $r(p,\lambda)=|\lambda|^2_{g_p}$. In local coordinates we have that
\[r(x,t)=\frac{t^jt^k}{\det g}g_{jk}.\]
A straightforward calculation shows that 
\bea
\alpha\wedge\omega=\frac{\sqrt{\det g}}{2}\ud x^1\wedge\ud x^2\wedge\ud x^3\wedge\ud r.\label{basic}
\eea
It follows that $\ud\alpha\wedge\omega=\ud(\alpha\wedge\omega)=0$, hence $\ud\alpha$ is primitive.

Let $f$ be a smooth single variable function of $r$, and we define
\[\varphi_f=\ud(f\alpha)=f\ud\alpha+f'\ud r\wedge\alpha.\]
It follows from (\ref{basic}) that $\varphi_f$ is a primitive 3-form for any such $f$. Moreover, direct calculation reveals that
\[\begin{split}&K(\varphi_f)\left(\frac{\p}{\p t^j}\right)=0,\\
&K(\varphi_f)\left(\frac{\p}{\p x^j}\right)=2f\sqrt{\det g}\left[(f+2rf')\frac{\p}{\p t^j}-f'\frac{\p r}{\p t^j}\cle\right],\end{split}\]
where $\cle=t^j\dfrac{\p}{\p t^j}$ is the local Euler vector field. It follows that 
\[F(\varphi_f)=-4f^2(f+2rf')\sqrt{\det g}\cdot\ud x^1\wedge\ud x^2\wedge\ud x^3.\]
We are mainly interested in the case that 
\bea
f(r)=r^{-\frac{1}{2}}(r^{\frac{3}{2}}+C)^{\frac{1}{3}}\label{fchoice}
\eea
for some constant $C$, so that
\[f^2(f+2rf')=1.\]
With the above choice (\ref{fchoice}) of $f$, we consider $M=\bigwedge^2T^*N$ when $C=0$ or $M=\bigwedge^2T^*N\setminus N$ when $C>0$ and $M=\bigwedge^2T^*N
\setminus\{r\leq(-C)^{2/3}\}$ when $C<0$. It follows that $(M,\omega,\varphi_f)$ is a triple such that $(M,\omega)$ is a symplectic 6-manifold and $\varphi_f$ is a primitive 3-form on $M$ belonging to the orbit $\clo_0^+$ pointwise. In addition, $\varphi_f$ is $F$-harmonic.

It is obvious that the Lagrangian foliation $\cll$ in this case is simply given by the vertical foliation of the fibration $\pi:\bigwedge^2T^*N\to N$. On one hand, the affine structure associated to the Bott connection $\nabla^B$ is the standard affine structure on the fibers of the vector bundle $\pi:\bigwedge^2T^*N\to N$. To understand the affine structure associated to $D$, we let 
\[V_j=K(\varphi_f)\left(\frac{\p}{\p x^j}\right)=2f\sqrt{\det g}\left[(f+2rf')\frac{\p}{\p t^j}-f'\frac{\p r}{\p t^j}\cle\right],\]
for $j=1,2,3$. By Remark \ref{foliationcoor} we know that the restriction of $V_j$ to any leaf is parallel under $D$, hence gives rise to a local affine coordinate on the Lagrangian leaf. In this local affine coordinate, the induced metric $g_{\cll}$ takes the Hessian form
\[h_{jk}=2f^{-1}g_{jk}-ff'\det g\frac{\p r}{\p t^j}\frac{\p r}{\p t^k}.\]
By direct computation we have
\[\det [h_{jk}]=8\det g\]
is constant in $t$, as predicted. In addition, we have
\[h^{jk}=\frac{f}{2}\left[g^{jk}+\frac{f'\det g}{2f}g^{jp}\frac{\p r}{\p t^p}\frac{\p r}{\p t^q}g^{qk}\right],\]
and
\[h_{jkl}=-4f'\sqrt{\det g}\left(g_{jk}\frac{\p r}{\p t^l}+g_{kl}\frac{\p r}{\p t^j}+g_{lj}\frac{\p r}{\p t^k}\right)-\frac{C(5r^{\frac{3}{2}}+2C)}{2r^4(r^{\frac{3}{2}}+C)^{\frac{2}{3}}}(\det g)^{\frac{3}{2}}\frac{\p r}{\p t^j}\frac{\p r}{\p t^k}\frac{\p r}{\p t^l}.\]
To demonstrate that $D$ and $\nabla^B$ induces different affine structures on the leaves, we only need to show that the induced metric $g_L$ on the leaves are not flat, as $D$ and $\nabla^B$ define dual Hessian structures on every leaf. This can be done by computing the scalar curvature of every leaf $L$. By (\ref{scalar}) we know that
\[S=\frac{1}{4}|Dg|^2>0\]
because $h_{jkl}\neq 0$.

Alternatively, we notice that in this example the induced metric on the leaves are rotationally symmetric. If we let $r=\rho^2$, then this metric can be written as
\[g_L=\frac{\rho^2}{2(\rho^3+C)^{2/3}}(\ud\rho)^2+\frac{\rho(\rho^3+C)^{1/3}}{2}g_{S^2}.\]
It follows that the scalar curvature of $g_L$ is
\[S=\frac{5C^2}{\rho^4(\rho^3+C)^{4/3}}.\]
\end{ex}

\begin{rmk}
Much of the discussion in this section can be placed in a more general setting such as in higher dimensions or with weaker assumption on integrability conditions. We shall not pursue this kind of generality in this paper. The general theory will be developed elsewhere.
\end{rmk}

\section{Detecting geometrical structures on symplectic 6-manifolds}

In a series of papers \cite{fei2021b},\cite{fei2023},\cite{fei2021g}, we introduce and investigate the Type IIA flow as an intrinsically defined geometric flow on symplectic 6-manifolds. In the language of Section 2, the Type IIA flow on a fixed symplectic manifold $(M,\omega)$ is the following evolution equation of 3-forms
\bea
\pt_t\varphi=\ud\Lambda_\omega\ud F(\varphi),\label{iiaflow}
\eea
where $\Lambda_\omega$ is the Lefschetz operator of contraction with respect to the symplectic form $\omega$. Usually we require the initial data $\varphi\big|_{t=0}=\varphi_0$ to be a $\ud$-closed 3-form that belongs to the orbit $\clo_-^+(\mu)$ pointwise so we have the short-time existence and uniqueness. However, in principle Equation (\ref{iiaflow}) makes sense for any initial data though additional effort is in need for its existence.

In this section, We also use the Type IIA flow as a tool to detect geometric structures on certain symplectic 6-manifolds, notably those with large symmetry, from two perspectives which are complementary to each other. 

On one hand, we would like to find integrable and closed stationary points of the Type IIA flow, namely primitive 3-forms $\varphi$ satisfying
\[\begin{cases}&\ud\varphi=0,\\
&\ud\Lambda_\omega\ud F(\varphi)=0.\end{cases}\]
In particular, these stationary points include those $\varphi$ that are $F$-harmonic. Presumably, the set of all $F$-harmonic 3-forms encode rich geometric and topological information about the underlying symplectic manifold. Moreover, the structure of this set also tells us what kind of geometric structures can be supported on this manifold. For example, by the result in \cite{fei2021g}, we know such $\varphi$ cannot pointwise lie in the orbit $\clo_-^+(\mu)$ if the underlying manifold does not support any K\"ahler structure.  

On the other hand, we would like to study the limits of the Type IIA flow. Suppose we have a Type IIA flow $\varphi(t)$ with initial data from $\clo_-^+(\mu)$ on a maximal time interval $[0,T)$. In general, we do not expect its convergence when $t\to T$ as the underlying manifold $(M,\omega)$ may be non-K\"ahler. However, it is possible that after a choice of suitable normalizations, the sequence $C(t)\varphi(t)$ has a smooth limit $\varphi_\infty$ when $t\to T$, where $C(t)$ is a constant depending on $t$. In this case, natural geometric structures associated to $\varphi_\infty$ would be of great interest, as they are special in the sense that they can be detected by the intrinsically defined Type IIA flow.

In this section, we shall test these ideas on certain 6-dimensional Lie groups with left-invariant symplectic structures and their quotients by cocompact discrete subgroups. In one aspect, these manifolds are rich enough to demonstrate various phenomena in Type IIA flow \cite[Section 9.3.2]{fei2021b}. On the other hand, their large group of local symmetries allows us to reduce the Type IIA flow to an ODE system that sometimes can be solved explicitly. Moreover, as we have the existence theorems about solutions to ODE, we could even include degenerate and non-closed 3-forms as out initial data.

For simplicity, we shall consider only those 6-dimensional symplectic Lie groups with cocompact discrete subgroups. It is well-known (see for example \cite{milnor1976}) that any Lie group admitting a cocompact discrete subgroup must be unimodular. By a theorem of Chu \cite{chu1974}, any unimodular symplectic Lie group must be solvable. Though the classification is not fully complete, the structure of 6-dimensional symplectic solvable Lie groups is well-understood from the works \cite{goze1987, goze1996, salamon2001, khakimdjanov2004, campoamor-stursberg2009, baues2016}. In particular, it contains many more examples compared to 6-dimensional Lie groups with left-invariant symplectic half-flat structures \cite{conti2007, fernandez2013}. In this section, we shall not attempt to exhaust all the possible cases. Instead, we present a more refined analysis of the two examples given in \cite[Section 9.3.2]{fei2021b}. Further examples will be left for future studies.

It is convenient to have the following lemma for computational purpose.
\begin{lemma}\label{bc}
Choose a basis $\{e^1,e^2,\dots,e^6\}$ of $V^*$ such that the symplectic form $\omega$ takes the standard form
\[\omega=e^{12}+e^{34}+e^{56}.\]
A general primitive 3-form $\varphi$ on $V$ can be written as
\bea
\varphi&=&Ae^{135}+Be^{136}+Ce^{145}+De^{146}+Ee^{235}+Fe^{236}+Ge^{245}+He^{246}\label{generalphi}\\
&&+(Ie^1+Je^2)(e^{34}-e^{56})+(Ke^3+Le^4)(e^{12}-e^{56})+(Me^5+Ne^6)(e^{12}-e^{34}),\nonumber
\eea
where $A,B,\dots,N$ are some constants. Then $F(\varphi)$ satisfies
\bea
-\frac{1}{2}F(\varphi)&=&\hat Ae^{135}+\hat Be^{136}+\hat Ce^{145}+\hat De^{146}+\hat Ee^{235}+\hat Fe^{236}+\hat Ge^{245}+\hat He^{246}\label{generalf}\\
&&+(\hat Ie^1+\hat Je^2)(e^{34}-e^{56})+(\hat Ke^3+\hat Le^4)(e^{12}-e^{56})+(\hat Me^5+\hat Ne^6)(e^{12}-e^{34}),\nonumber
\eea
where
\bea
\hat A&=&A(AH-BG-CF-DE+2IJ+2KL+2MN)\nonumber\\
&&-2(BM^2+CK^2+EI^2-BCE+2IKM),\nonumber\\
\hat B&=&B(AH-BG+CF+DE+2IJ+2KL-2MN)\nonumber\\
&&-2(-AN^2+DK^2+FI^2+ADF+2IKN),\nonumber\\
\hat C&=&C(AH+BG-CF+DE+2IJ-2KL+2MN)\nonumber\\
&&-2(-AL^2+DM^2+GI^2+ADG+2ILM),\nonumber\\
\hat D&=&D(-AH-BG-CF+DE+2IJ-2KL-2MN)\nonumber\\
&&-2(-BL^2-CN^2+HI^2-BCH+2ILN),\nonumber
\eea
and
\bea
\hat E&=&E(AH+BG+CF-DE-2IJ+2KL+2MN)\nonumber\\
&&-2(-AJ^2+FM^2+GK^2+AFG+2JKM),\nonumber\\
\hat F&=&F(-AH-BG+CF-DE-2IJ+2KL-2MN)\nonumber\\
&&-2(-BJ^2+HK^2-EN^2-BEH+2JKN),\nonumber\\
\hat G&=&G(-AH+BG-CF-DE-2IJ-2KL+2MN)\nonumber\\
&&-2(-CJ^2-EL^2+HM^2-CEH+2JLM),\nonumber\\
\hat H&=&H(-AH+BG+CF+DE-2IJ-2KL-2MN)\nonumber\\
&&-2(-DJ^2-FL^2-GN^2+DFG+2JLN),\nonumber\\
\hat I&=&I(AH-BG-CF+DE)-2J(AD-BC)+2(ALN-BLM-CKN+DKM),\nonumber\\
\hat J&=&J(-AH+BG+CF-DE)+2I(EH-FG)+2(ELN-FLM-GKN+HKM),\nonumber\\
\hat K&=&K(AH-BG+CF-DE)-2L(AF-BE)+2(AJN-BJM-EIN+FIM),\nonumber\\
\hat L&=&L(-AH+BG-CF+DE)+2K(CH-DG)+2(CJN-DJM-GIN+HIM),\nonumber\\
\hat M&=&M(AH+BG-CF-DE)-2N(AG-CE)+2(AJL-CJK-EIL+GIK),\nonumber\\
\hat N&=&N(-AH-BG+CF+DE)+2M(BH-DF)+2(BJL-DJK-FIL+HIK).\nonumber
\eea
In this frame, we have
\bea
&&\frac{Q(\varphi)}{4}\label{generalq}\\
&=&2(A^2H^2+B^2G^2+C^2F^2+D^2E^2)-(AH+BG+CF+DE)^2+4(ADFG+BCEH)\nonumber\\
&&+4I^2(FG-EH)+4J^2(BC-AD)+4IJ(AH-BG-CF+DE)\nonumber\\
&&+4K^2(DG-CH)+4L^2(BE-AF)+4KL(AH-BG+CF-DE)\nonumber\\
&&+4M^2(DF-BH)+4N^2(CE-AG)+4MN(AH+BG-CF-DE)\nonumber\\
&&+8\left(AJLN-BJLM-CJKN+DJKM-EILN+FILM+GIKN-HIKM\right).\nonumber
\eea
\begin{proof}
By brute force calculation.
\end{proof}
\end{lemma}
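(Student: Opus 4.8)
The plan is to compute $K(\varphi)$, then $F(\varphi)$, then $Q(\varphi)$ directly from the definitions in Section~2, using the symplectic trivialization $\frac{\omega^3}{3!}=e^{123456}$ of $\bigwedge^6V^*$. Let $\{e_1,\dots,e_6\}$ be the basis of $V$ dual to $\{e^1,\dots,e^6\}$. First I would compute the six $2$-forms $\iota_{e_j}\varphi$, which are linear in the fourteen coefficients $A,\dots,N$ of (\ref{generalphi}). Next, forming the $5$-forms $-\iota_{e_j}\varphi\wedge\varphi$ and reading off $K(\varphi)(e_j)\in V$ through $\iota_{K(\varphi)(e_j)}e^{123456}=-\iota_{e_j}\varphi\wedge\varphi$ (equation (\ref{K})) produces the $6\times6$ matrix of $K=K(\varphi)$, whose entries are quadratic in $A,\dots,N$. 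Then (\ref{F}), in the form $\iota_{e_j}F(\varphi)=-2\iota_{K(\varphi)(e_j)}\varphi$, reassembles $F(\varphi)$ as a $3$-form cubic in the coefficients; decomposing it in the basis $\{e^{135},\dots,e^{246}\}\cup\{e^i\wedge(\cdots)\}$ of $\bigwedge^3_0V^*$ yields $\hat A,\dots,\hat N$. Finally $Q(\varphi)\,e^{123456}=-\varphi\wedge F(\varphi)$ (equation (\ref{Q})) gives $Q(\varphi)$, equivalently the expression (\ref{generalq}) for $\frac{Q(\varphi)}{4}$.

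To make this manageable and self-checking, I would exploit the finite subgroup $G<\Sp(V,\omega)$ generated by the cyclic rotation of the three symplectic $2$-planes $\{e^1,e^2\},\{e^3,e^4\},\{e^5,e^6\}$ together with the order-four rotations within each plane. Since $K$, $F$, $Q$ are $\mathrm{GL}(V)$-equivariant and $G$ preserves $\omega$, these maps are $G$-equivariant; and $G$ acts transitively, up to sign, on the eight ``block'' generators $e^{135},\dots,e^{246}$ and (separately) on the six remaining generators $e^i\wedge(\cdots)$. Hence it suffices to compute the single coefficient $\hat A$ (of $e^{135}$ in $-\frac{1}{2}F(\varphi)$), the single coefficient $\hat I$ (of $e^1\wedge(e^{34}-e^{56})$), and the scalar $Q(\varphi)$; the other twelve entries of (\ref{generalf}) then follow by applying suitable elements of $G$, and one checks that the displayed formulas for $\hat B,\dots,\hat H$ and $\hat J,\dots,\hat N$ are precisely these images, while (\ref{generalq}) is visibly $G$-invariant.

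Once the formulas are in hand I would verify them against the structural identities already established: the resulting $K$ must satisfy $K\circ K=\frac{1}{4}Q(\varphi)\,\id$ (equation (\ref{square})) and $\varphi\wedge F(\varphi)=-Q(\varphi)\frac{\omega^3}{3!}$, and a second substitution into (\ref{generalf}) must return $-Q(\varphi)^2\varphi$, matching $F(F(\varphi))=-\varphi\cdot Q^2(\varphi)$; moreover, plugging in the normal forms from the tables of Section~2 --- say $\varphi=\mu(e^{135}-e^{146}-e^{236}-e^{245})$ for $\clo_-^+(\mu)$, or $\varphi=e^{146}+e^{236}+e^{245}$ for $\clo_0^+$ --- must reproduce the values of $F$ and $Q$ known there. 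Since $\bigcup_{\mu>0}\clo_-^+(\mu)$ is open in $\bigwedge^3_0V^*$ and every claimed identity is polynomial in $A,\dots,N$, it would even suffice to verify (\ref{generalf}) and (\ref{generalq}) on that open set, giving an independent logical route to the lemma.

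The only real obstacle is the bulk of the computation: $F$ is a cubic form in fourteen variables with on the order of a hundred monomials per component, and each step demands consistent orientation of $5$- and $6$-forms. This is purely mechanical and is best delegated to a computer algebra system, with the $G$-symmetry reduction cutting the work by a large factor (roughly seven) and the identities $K\circ K=\frac{1}{4}Q\,\id$ and $F\circ F=-\varphi\cdot Q^2$ providing independent verification; no conceptual difficulty arises, which is why the author can fairly summarize the proof as ``brute force calculation.''
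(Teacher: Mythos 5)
Your proposal is correct and follows essentially the same route as the paper, whose proof is simply the direct ("brute force") computation of $K(\varphi)$, $F(\varphi)$ and $Q(\varphi)$ from the defining relations (\ref{K}), (\ref{F}), (\ref{Q}) in the trivialization $\frac{\omega^3}{3!}=e^{123456}$. The $\Sp(V,\omega)$-symmetry reduction and the consistency checks via $K\circ K=\frac{Q}{4}\id$ and $F(F(\varphi))=-Q^2\varphi$ are sensible organizational additions but do not change the underlying argument.
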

In \cite{hitchin2000}, Hitchin shows that for $\varphi\in\clo_-^+(\mu)$, one has
\[\delta|\varphi|^2=2\frac{\delta\varphi\wedge\hat\varphi}{\omega^3/3!}.\]
This translates to our language as
\[\delta Q(\varphi)=-4\frac{\delta\varphi\wedge F(\varphi)}{\omega^3/3!},\]
which holds for arbitrary $\varphi$ by its algebraicity.
In a local frame with the notation in Lemma \ref{bc}, we have
\[\frac{Q(\varphi)}{2}=-A\hat H+B\hat G+C\hat F-D\hat E+E\hat D-F\hat C-G\hat B+H\hat A-2I\hat J+2J\hat I-2K\hat L+2L\hat K-2M\hat N+2M\hat N.\]
Hitchin's result in local coordinates is equivalent to the following seeming wrongly-scaled formulae:
\bea
\frac{\pt Q}{\pt A}=-8\hat H,&\quad &\frac{\pt Q}{\pt H}=8\hat A,\nonumber\\
\frac{\pt Q}{\pt B}=8\hat G,&\quad &\frac{\pt Q}{\pt G}=-8\hat B,\nonumber\\
\frac{\pt Q}{\pt C}=8\hat F,&\quad &\frac{\pt Q}{\pt F}=-8\hat C,\nonumber\\
\frac{\pt Q}{\pt D}=-8\hat E,&\quad &\frac{\pt Q}{\pt E}=8\hat D,\nonumber\\
\frac{\pt Q}{\pt I}=-16\hat J,&\quad &\frac{\pt Q}{\pt J}=16\hat I,\nonumber\\
\frac{\pt Q}{\pt K}=-16\hat L,&\quad &\frac{\pt Q}{\pt L}=16\hat K,\nonumber\\
\frac{\pt Q}{\pt M}=-16\hat N,&\quad &\frac{\pt Q}{\pt N}=16\hat M.\nonumber
\eea
In fact, the scaling here is the consequence of the Euler's identity.

\begin{ex}
Now let us give a closer look to the nilmanifold considered in \cite[Example 5.2]{bartolomeis2006} and \cite[pp. 798-799]{fei2021b}, where the Lie algebra of the nilpotent Lie group is characterized by left-invariant 1-forms $\{e^1,\dots,e^6\}$ satisfying
\[\begin{split}&\ud e^1=\ud e^2=\ud e^3=\ud e^5=0,\\
&\ud e^4=e^{15},\quad \ud e^6=e^{13},\end{split}\]
and the symplectic form $\omega$ is
\[\omega=e^{12}+e^{34}+e^{56}.\]
We consider the Type IIA flow with $\varphi$ taking the form of (\ref{generalphi}). Notice that the kernel of the operator $\ud\Lambda_\omega\ud$ on left-invariant primitive 3-forms is 13-dimensional:
\bea
\ker\ud\Lambda_\omega\ud&=&\spann\left\{e^{135},e^{136},e^{145},e^{146},e^{235},e^{236},e^{245},e^1(e^{34}-e^{56}),e^2(e^{34}-e^{56}), \right.\nonumber\\
&&\left.e^3(e^{12}-e^{56}),e^4(e^{12}-e^{56}),e^5(e^{12}-e^{34}),e^6(e^{12}-e^{34})\right\},\nonumber\\
\ud\Lambda_\omega\ud(e^{246})&=&-2e^{135}.\nonumber
\eea
Therefore the Type IIA flow with ansatze (\ref{generalphi}) reduces to an ODE
\[\p_t A=4\hat H=-4AH^2+R,\]
where
\[R=4H(BG+CF+DE-2IJ-2KL-2MN)+8(DJ^2+FL^2+GN^2-DFG-2JLN),\]
and $B,C,\dots,M,N$ are constants depending only on initial data.
The solution of this ODE is 
\[A(t)=e^{-4H^2t}\left[\frac{R(e^{4H^2t}-1)}{4H^2}+A(0)\right].\]
When $H=0$, the above expression should be understood as $A(t)=A(0)+Rt$. We see that the flow always has long time existence and it converges if and only if $H\neq 0$ or $H=R=0$. In addition, $\varphi$ is integrable, namely $\ud$-closed, if and only if $H=J=L=N=0$, in which case $R=-8DFG$.

The divergent case is that $H=0$ and $R\neq 0$, so we have
\[\lim_{t\to\infty}\frac{\varphi(t)}{Rt}=e^{135}=:\varphi_{\infty}.\] 
In this case $\ker\varphi_\infty=\spann\{e_2,e_4,e_6\}$ defines a Lagrangian foliation on the nilmanifold. In fact, it gives rise to a Lagrangian fibration since the Lagrangian subspace $\spann\{e_2,e_4,e_6\}$ of the Lie algebra is an ideal and it corresponds to a normal Lagrangian subgroup of the nilpotent Lie group. This verifies the metrical picture that along the Type IIA flow, the fibers of this Lagrangian fibration collapses and the manifold after scaling converges to its base in the sense of Gromov-Hausdorff. 

When $H\neq 0$, we have 
\[\lim_{t\to\infty}A(t)=\frac{R}{4H^2}.\]
In other words, $\varphi(t)$ converges to a left-invariant 3-form with $\hat H=0$, which is a stationary points of the Type IIA flow. Straightforward calculation yields the following result:
\begin{prop}
Following the notation in Lemma \ref{bc}, we have
\begin{enumerate}
\item $\varphi$ is integrable if and only if $H=J=L=N=0$.
\item $\varphi$ is a stationary point of the Type IIA flow if and only if $\hat H=0$.
\item $\varphi$ is an integrable stationary point of the Type IIA flow if and only if $H=J=L=N=0$ and $\hat H=-2DFG=0$. In this case 
\[\frac{Q(\varphi)}{4}=2(BG+CF+DE)^2-(BG+CF+DE)^2+4(I^2FG+K^2DG+M^2DF)\]
can be of any sign. When $Q(\varphi)$ is negative, $\varphi$ belongs to the orbit $\clo_-^-(\mu)$ hence the associated metric $q(\omega,\varphi)$ must have signature $(2,4)$.
\item $\varphi$ is integrable and $F$-integrable if and only if $H=J=L=N=0$ and $DFG=DKG=DFM=IFG=0$. In this case
\[\frac{Q(\varphi)}{4}=2(BG+CF+DE)^2-(BG+CF+DE)^2\geq 0\]
since $DFG=0$. As a consequence, there exist integrable and $F$-integrable forms from any of the orbits $\clo_+(\mu)$, $\clo_0^\pm$, $\clo_1^\pm$, $\clo_3\cap\bigwedge_0^3V^*$ and $\clo_6=\{0\}$. In particular, there exist para-K\"ahler Calabi-Yau structures on the associated nilmanifolds.
\end{enumerate}
\end{prop}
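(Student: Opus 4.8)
\emph{Proof proposal.} The plan is to reduce everything to the explicit formulas of Lemma~\ref{bc} together with a direct computation of $\ud\varphi$ and $\ud\Lambda_\omega\ud F(\varphi)$ for the ansatz (\ref{generalphi}). For (a) I would compute $\ud\varphi$ monomial by monomial from the structure equations $\ud e^4=e^{15}$, $\ud e^6=e^{13}$ (and $\ud e^i=0$ otherwise): every monomial in (\ref{generalphi}) not containing $e^4$ or $e^6$ is closed, and for almost all of the others $\ud e^4\wedge(\cdots)$ or $(\cdots)\wedge\ud e^6$ carries a repeated index and dies, so that
\[\ud\varphi=-H\,e^{1234}+2J\,e^{1235}+H\,e^{1256}-L\,e^{1345}-N\,e^{1356},\]
a combination of five linearly independent $4$-forms; hence $\ud\varphi=0$ iff $H=J=L=N=0$. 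For (b) the key input is the stated fact that on left-invariant primitive $3$-forms $\ud\Lambda_\omega\ud$ annihilates every basis monomial except $e^{246}$, which it sends to $-2e^{135}$. Since $F(\varphi)$ is again a primitive $3$-form of the form (\ref{generalphi}) whose $e^{246}$-coefficient is $-2\hat H$ by (\ref{generalf}), one gets $\ud\Lambda_\omega\ud F(\varphi)=4\hat H\,e^{135}$, so $\varphi$ is a stationary point of (\ref{iiaflow}) iff $\hat H=0$.

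Part (c) combines (a) and (b): the integrable stationary points are exactly those with $H=J=L=N=0$ and $\hat H=0$, and substituting $H=J=L=N=0$ into the expression for $\hat H$ in Lemma~\ref{bc} gives $\hat H=-2DFG$. Feeding the same relations into (\ref{generalq}) produces the stated formula for $Q(\varphi)$, whose three summands $I^2FG$, $K^2DG$, $M^2DF$ survive and can each be made to dominate, so $Q$ takes values of both signs over the family. If $Q(\varphi)<0$, then since $DFG=0$ one of $D,F,G$ vanishes, and in that case (\ref{generalq}) collapses — e.g. for $D=0$ it reads $Q(\varphi)/4=(BG-CF)^2+4I^2FG$ — forcing respectively $FG<0$, $DG<0$ or $DF<0$. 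Using $q(\omega,\varphi)(v,v)=\dfrac{\iota_v\varphi\wedge\iota_v\varphi\wedge\omega}{\omega^3/3!}$ with $H=J=L=N=0$, a short computation gives $q(\omega,\varphi)(e_2,e_2)=2FG$, $q(\omega,\varphi)(e_4,e_4)=2DG$, $q(\omega,\varphi)(e_6,e_6)=2DF$; in the relevant case this entry is negative, so $q(\omega,\varphi)$ is not positive definite. As $Q(\varphi)<0$ forces $\varphi\in\clo_-^+(\mu)\cup\clo_-^-(\mu)$ and only $\clo_-^+(\mu)$ has positive definite $q$, we conclude $\varphi\in\clo_-^-(\mu)$, whose associated form has signature $(2,4)$ by Proposition~\ref{cases}.

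For (d), $\ud F(\varphi)$ is computed exactly as $\ud\varphi$ but from the coefficients of (\ref{generalf}) in place of those of $\varphi$, so $\ud F(\varphi)=0$ iff $\hat H=\hat J=\hat L=\hat N=0$; imposing integrability ($H=J=L=N=0$) and reading off Lemma~\ref{bc} gives $\hat H=-2DFG$, $\hat J=-2IFG$, $\hat L=-2KDG$, $\hat N=-2MDF$, which is precisely the asserted condition $DFG=DKG=DFM=IFG=0$. Substituting these into (\ref{generalq}) kills $I^2FG$, $K^2DG$ and $M^2DF$ (e.g. $I^2FG=I\cdot(IFG)=0$) and leaves $Q(\varphi)/4$ equal to $(BG-CF)^2$, $(BG-DE)^2$ or $(CF-DE)^2$ according to which of $D,F,G$ vanishes, so $Q(\varphi)\ge0$. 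It then remains to hit every listed orbit with an $F$-harmonic form: $\varphi=0$ for $\clo_6$; $e^{135}$ for $\clo_3\cap\bigwedge^3_0V^*$; $e^{135}\mp e^{245}$ for $\clo_1^\pm$; $e^{136}+e^{245}$ for $\clo_+(\mu)$ (this is the promised para-K\"ahler Calabi--Yau structure, as $Q>0$); and, taking $D=I=K=M=0$, $B=C=F=G=1$ (so $BG=CF$, $Q=0$) and $A,E$ free, the forms $e^{136}+e^{145}+e^{235}+e^{236}+e^{245}$ ($A=0$, $E=1$) and $e^{135}+e^{136}+e^{145}+e^{236}+e^{245}$ ($A=1$, $E=0$) for $\clo_0^+$ and $\clo_0^-$ respectively. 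Orbit membership is confirmed by diagonalizing the $6\times 6$ matrix $q(\omega,\varphi)$: its rank separates $\clo_0$ (rank $3$) from $\clo_\pm$ (rank $6$), $\clo_1$ (rank $1$) and $\clo_3$ (rank $0$), and in the rank-$3$ case its signature distinguishes $\clo_0^+$ (signature $(3,3,0)$) from $\clo_0^-$ (signature $(3,1,2)$) via Proposition~\ref{cases}.

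Everything here is bookkeeping on top of Lemma~\ref{bc} except the last step, which I expect to be the main obstacle: the $\mathrm{Sp}(V,\omega)$-normal forms tabulated in Section~2 — such as $e^{146}\pm e^{236}\pm e^{245}$ for $\clo_0^\pm$, or $\mu(e^{135}+e^{246})$ for $\clo_+(\mu)$ — all violate the $F$-harmonic constraints $H=J=L=N=0$, $DFG=DKG=DFM=IFG=0$, so the table cannot simply be quoted. One must instead locate representatives inside the lower-dimensional family cut out by those constraints and certify the orbit type of each by an explicit diagonalization of $q(\omega,\varphi)$; this is the one place where a genuine, if small, computation is unavoidable.
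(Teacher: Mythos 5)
Your proposal is correct and follows essentially the same route as the paper, which gives no details beyond ``straightforward calculation'': you verify (a) by differentiating the ansatz (\ref{generalphi}) with the structure equations, (b)--(d) by reading off $\hat H=-2DFG$, $\hat J=-2IFG$, $\hat L=-2KDG$, $\hat N=-2MDF$ from Lemma \ref{bc} together with $\ud\Lambda_\omega\ud F(\varphi)=4\hat H e^{135}$, and you settle the orbit claims via the signature of $q(\omega,\varphi)$ and explicit representatives, exactly as intended. One small caution: your own (correct) reduction of (\ref{generalq}), e.g.\ $Q/4=(BG-CF)^2+4I^2FG$ when $D=0$, corresponds to $2(B^2G^2+C^2F^2+D^2E^2)-(BG+CF+DE)^2+4(I^2FG+K^2DG+M^2DF)$ and does not literally coincide with the displayed term $2(BG+CF+DE)^2$ in the proposition (apparently a typo there), so you should not also claim that the substitution ``produces the stated formula''; the sign conclusions and orbit memberships you draw are unaffected.
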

\end{ex}

\begin{ex}
Next, let us investigate the solvmanifold in \cite[Theorem 4.5]{tomassini2008} and \cite[pp. 799-802]{fei2021b} in greater generality, where the Lie algebra of the 6-dimensional solvable Lie group is characterized by
\[\begin{split}&\ud e^1=-\lambda e^{15},\quad\ud e^2=\lambda e^{25},\quad\ud e^3=-\lambda e^{36},\\
&\ud e^4=\lambda e^{46},\quad \ud e^5=0,\quad \ud e^6=0,\end{split}\]
with $\lambda=\log\dfrac{3+\sqrt{5}}{2}$. In addition, we take the symplectic form to be $\omega=e^{12}+e^{34}+e^{56}$. For simplicity, we shall only consider the Type IIA flow on it with closed initial data. In view of (\ref{generalphi}) it means $A=B=\alpha$, $C=-D=\beta$, $E=-F=\gamma$, $G=H=-\delta$, and $I=J=K=L=0$. In other words, any left-invariant closed primitive $\varphi$ on this solvable Lie group is of the form
\[\varphi=\alpha(e^{135}+e^{136})+\beta(e^{145}-e^{146})+\gamma(e^{235}-e^{236})-\delta(e^{245}+e^{246})+(Me^5+Ne^6)(e^{12}-e^{34}).\]
By direct calculation, we know that a basis of $\ker\ud\Lambda_\omega\ud$ consists of $e^{135}+e^{136}$, $e^{145}-e^{146}$, $e^{235}-e^{236}$, $e^{245}+e^{246}$, $e^1(e^{34}-e^{56})$, $e^2(e^{34}-e^{56})$, $e^3(e^{12}-e^{56})$, $e^4(e^{12}-e^{56})$, $e^5(e^{12}-e^{34})$, $e^6(e^{12}-e^{34})$. In addition, we have
\[\begin{split}&\ud\Lambda_\omega\ud e^{135}=-\ud\Lambda_\omega\ud e^{136}=-\lambda^2(e^{135}+e^{136}),\\
&\ud\Lambda_\omega\ud e^{145}=\ud\Lambda_\omega\ud e^{146}=\lambda^2(e^{145}-e^{146}),\\
&\ud\Lambda_\omega\ud e^{235}=\ud\Lambda_\omega\ud e^{236}=\lambda^2(e^{235}-e^{236}),\\
&\ud\Lambda_\omega\ud e^{245}=-\ud\Lambda_\omega\ud e^{246}=-\lambda^2(e^{245}+e^{246}),\end{split}\]
and
\[\begin{split}&\hat A=4\alpha\beta\gamma+2\alpha MN-2\alpha M^2,\\
&\hat B=-4\alpha\beta\gamma-2\alpha MN+2\alpha N^2,\\
&\hat C=-4\alpha\beta\delta+2\beta MN+2\beta M^2,\\
&\hat D=-4\alpha\beta\delta+2\beta MN+2\beta N^2,\\
&\hat E=-4\alpha\gamma\delta+2\gamma MN+2\gamma M^2,\\
&\hat F=-4\alpha\gamma\delta+2\gamma MN+2\gamma N^2,\\
&\hat G=-4\beta\gamma\delta-2\delta MN+2\delta M^2,\\
&\hat H=4\beta\gamma\delta+2\delta MN-2\delta N^2,\end{split}\]
together with
\[\begin{split}&\hat I=\hat J=\hat K=\hat L=0,\\
&\hat M=-2\alpha\delta(M-N)+2\beta\gamma(M+N),\\
&\hat N=-2\alpha\delta(M-N)-2\beta\gamma(M+N).\end{split}\]
It follows that the Type IIA flow reduces to the ODE system
\bea
\begin{cases}\label{solvsystem}
\pt_t\alpha&=4\lambda^2\alpha(4\beta\gamma-(M-N)^2),\\
\pt_t\beta&=4\lambda^2\beta(4\alpha\delta-(M+N)^2),\\
\pt_t\gamma&=4\lambda^2\gamma(4\alpha\delta-(M+N)^2),\\
\pt_t\delta&=4\lambda^2\delta(4\beta\gamma-(M-N)^2),
\end{cases}
\eea
where $M$ and $N$ are constants depending on initial data. Moreover, we have that
\[\frac{Q(\varphi)}{4}=-16\alpha\beta\gamma\delta+4\alpha\delta(M-N)^2+4\beta\gamma(M+N)^2.\]

Like the previous example, we can classify all left-invariant integrable stationary points of the Type IIA flow on this solvable Lie group.
\begin{prop}
Following the notation in Lemma \ref{bc}, we have
\begin{enumerate}
\item $\varphi$ is integrable if and only if $A=B=\alpha$, $C=-D=\beta$, $E=-F=\gamma$, $G=H=-\delta$, and $I=J=K=L=0$.
\item $\varphi$ is an integrable stationary point of the Type IIA flow if and only if it is integrable and $F$-integrable. In this case, we have $A=B=\alpha$, $C=-D=\beta$, $E=-F=\gamma$, $G=H=-\delta$, $I=J=K=L=0$, and $\alpha(4\beta\gamma-(M-N)^2)=\delta(4\beta\gamma-(M-N)^2)=\beta(4\alpha\delta-(M+N)^2)=\gamma(4\alpha\delta-(M+N)^2)=0$. In particular, we know $Q(\varphi)=64\alpha\beta\gamma\delta\geq 0$.
\item If $\varphi$ is both integrable and $F$-integrable, then $\varphi$ lies in one of the following orbits $\clo_+(\mu)$, $\clo_1^\pm$, $\clo_3\cap\bigwedge_0^3V^*$ and $\clo_6=\{0\}$.
\end{enumerate}
\end{prop}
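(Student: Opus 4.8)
\emph{Proof strategy.} All three assertions are proved by explicit computation in the fixed left-invariant coframe $\{e^1,\dots,e^6\}$, using Lemma~\ref{bc} together with the specializations of $F(\varphi)$, of $\ud\Lambda_\omega\ud$, and of $Q(\varphi)$ recorded just above the statement.

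For~(1), I would compute $\ud\varphi$ for the general primitive $3$-form~(\ref{generalphi}) monomial by monomial from $\ud e^1=-\lambda e^{15}$, $\ud e^2=\lambda e^{25}$, $\ud e^3=-\lambda e^{36}$, $\ud e^4=\lambda e^{46}$ and $\ud e^5=\ud e^6=0$; since $\ud e^{12}=\ud e^{34}=\ud e^{56}=0$, the two terms $e^5(e^{12}-e^{34})$ and $e^6(e^{12}-e^{34})$ are closed, which is exactly why $M,N$ stay unconstrained. Each of the remaining twelve monomials contributes a single basis $4$-form, and after collecting terms $\ud\varphi$ is a linear combination of the eight linearly independent $4$-forms $e^{1356},e^{1456},e^{2356},e^{2456},e^{1345},e^{2345},e^{1236},e^{1246}$, with coefficients equal, up to the nonzero factor $\lambda$, to $B-A$, $C+D$, $E+F$, $G-H$, $I$, $J$, $K$, $L$. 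Setting these to zero gives exactly $A=B$, $C=-D$, $E=-F$, $G=H$, $I=J=K=L=0$, i.e.\ the asserted normal form; conversely any such $\varphi$ is clearly closed.

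For~(2), fix a closed $\varphi$ as in~(1), so that the explicit values of $\hat A,\dots,\hat N$ recorded above the statement apply. Using the recorded action of $\ud\Lambda_\omega\ud$ on $e^{135},\dots,e^{246}$, one finds that $\ud\Lambda_\omega\ud F(\varphi)$ is a linear combination of the four independent forms $e^{135}+e^{136}$, $e^{145}-e^{146}$, $e^{235}-e^{236}$, $e^{245}+e^{246}$ with coefficients proportional to $\hat A-\hat B$, $\hat C+\hat D$, $\hat E+\hat F$, $\hat G-\hat H$; the same bookkeeping as in~(1) shows that $\ud F(\varphi)$ is a linear combination of $e^{1356},e^{1456},e^{2356},e^{2456}$ with coefficients proportional to the very same four quantities. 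Hence, for closed $\varphi$, $\ud\Lambda_\omega\ud F(\varphi)=0$ if and only if $\ud F(\varphi)=0$, which is the first assertion of~(2). Substituting the explicit $\hat A,\dots,\hat H$ one gets $\hat A-\hat B=2\alpha\big(4\beta\gamma-(M-N)^2\big)$ and the three analogues, so the four vanishing conditions are precisely the ones displayed in the statement. Feeding these into the formula $Q(\varphi)/4=-16\alpha\beta\gamma\delta+4\alpha\delta(M-N)^2+4\beta\gamma(M+N)^2$ and splitting into cases according to whether $4\beta\gamma-(M-N)^2$ and $4\alpha\delta-(M+N)^2$ vanish, one obtains $Q(\varphi)=64\alpha\beta\gamma\delta$ in every case; moreover in each case $\alpha\delta$ equals $0$ or $\tfrac{1}{4}(M+N)^2$ and $\beta\gamma$ equals $0$ or $\tfrac{1}{4}(M-N)^2$, so $\alpha\delta\ge0$, $\beta\gamma\ge0$ and $Q(\varphi)=64(\alpha\delta)(\beta\gamma)\ge0$.

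For~(3), by~(2) any such $\varphi$ has $Q(\varphi)=64\alpha\beta\gamma\delta\ge0$, which immediately excludes the orbits $\clo_-^\pm(\mu)$, on which $Q<0$. If $Q(\varphi)>0$, then $\varphi$ is stable, hence (being primitive) $\varphi\in\clo_+(\mu)$. If $Q(\varphi)=0$, then $\varphi$ lies in one of $\clo_0^\pm$, $\clo_1^\pm$, $\clo_3\cap\bigwedge^3_0V^*$, $\{0\}$; since among these only $\clo_0^\pm$ has $\ker\varphi=\{0\}$, it suffices to produce a nonzero vector in $\ker\varphi$. I would write $\varphi=e^5\wedge\mu+e^6\wedge\nu$ with $\mu,\nu$ primitive $2$-forms on $W'=\spann\{e^1,e^2,e^3,e^4\}$ having equal $e^{13}$- and $e^{24}$-coefficients (both forced by integrability), and note that $\iota_v\varphi=0$ is equivalent to $v^5\mu+v^6\nu=0$ together with $\iota_{v_{W'}}\mu=0=\iota_{v_{W'}}\nu$. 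Running through the finitely many shapes of $\varphi$ compatible with~(2) and $Q(\varphi)=0$ — all of which satisfy $\alpha\delta=0$ or $\beta\gamma=0$ — the conditions of~(2) force, in each case, either $\mu,\nu$ to be linearly dependent, or at least one of $\mu,\nu$ to be degenerate with its radical in $W'$ meeting that of the other; in either case $\ker\varphi\ne\{0\}$. This rules out $\clo_0^\pm$ and leaves only $\clo_1^\pm$, $\clo_3\cap\bigwedge^3_0V^*$ and $\{0\}$. The bookkeeping for~(1), for~(2), and for the identity $Q(\varphi)=64\alpha\beta\gamma\delta$ is routine once Lemma~\ref{bc} is available, and the sign $Q(\varphi)\ge0$ drops out of the algebra; the one genuine obstacle is the last step, the exclusion of $\clo_0^\pm$, for which I see no one-line conceptual argument and which thus seems to require the case-by-case kernel computation above (equivalently, checking that the rank-$3$ quadratic form $q(\omega,\varphi)$ attached to $\clo_0^\pm$ never occurs here once $Q(\varphi)=0$).
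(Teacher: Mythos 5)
Your parts (1) and (2) are correct and coincide with what the paper means by ``standard calculation'': the coefficients of $\ud\varphi$ are indeed proportional to $B-A$, $C+D$, $E+F$, $G-H$, $I$, $J$, $K$, $L$, and for closed $\varphi$ one has $\hat A-\hat B=2\alpha(4\beta\gamma-(M-N)^2)$, $\hat C+\hat D=-2\beta(4\alpha\delta-(M+N)^2)$, etc., so that $\ud\Lambda_\omega\ud F(\varphi)=0$ and $\ud F(\varphi)=0$ impose the same four conditions (this is exactly the reduction to the ODE system (\ref{solvsystem})). One small slip: it is not true in every case that $\alpha\delta\in\{0,\tfrac14(M+N)^2\}$ and $\beta\gamma\in\{0,\tfrac14(M-N)^2\}$ — if, say, conditions (iii),(iv) hold because $\beta=\gamma=0$, then $\alpha\delta$ is unconstrained and may be negative. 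The conclusion survives because in that branch $\beta\gamma=0$, so $Q(\varphi)=64(\alpha\delta)(\beta\gamma)=0$; a cleaner derivation is to multiply the first condition by $\delta$ and the third by $\gamma$, giving $4\alpha\beta\gamma\delta=\alpha\delta(M-N)^2=\beta\gamma(M+N)^2$, from which $Q=64\alpha\beta\gamma\delta\ge0$ follows at once.

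For part (3) you take a genuinely different route from the paper, and it is the one place where your argument is only sketched. You propose to exclude $\clo_0^\pm$ by exhibiting a nonzero kernel vector via the splitting $\varphi=\mu\wedge e^5+\nu\wedge e^6$, but the decisive step (``running through the finitely many shapes \dots forces $\mu,\nu$ dependent or a common radical vector'') is asserted rather than verified, and you flag it yourself as the obstacle. The paper's argument is the conceptual shortcut you were looking for: under the conditions of (b), if $Q(\varphi)=0$ one checks directly from the displayed formulas for $\hat A,\dots,\hat N$ that \emph{all} of them vanish — in the case $4\alpha\delta=(M+N)^2$, $4\beta\gamma=(M-N)^2$ one gets $M=N$, $\beta\gamma=0$, $\alpha\delta=M^2=MN=N^2$; in the other case $\beta=\gamma=0$ and $\alpha(M-N)^2=\delta(M-N)^2=0$ — so $F(\varphi)=0$ identically. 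Since $F$ vanishes exactly on $\clo_1\coprod\clo_3\coprod\clo_6$ (equivalently, $F\neq0$ on $\clo_0^\pm$, whose image under $F$ is $\clo_3\cap\bigwedge^3_0V^*$), this rules out $\clo_0^\pm$ without any kernel computation; it also implies the nontriviality of $\ker\varphi$ that your case analysis was meant to establish. So your strategy would work, but completing it requires precisely the case-by-case algebra you deferred, whereas computing $F(\varphi)$ — data you already have in hand — settles it in two short cases.
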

\begin{proof}
Part (a) and (b) follow directly from standard calculation. For Part (c), it is easy to find $\alpha$, $\beta$, $\gamma$, $\delta$, $M$ and $N$ such that
\bea
4\alpha\delta-(M+N)^2=4\beta\gamma-(M-N)^2=0,\label{simpcon}
\eea 
with $Q(\varphi)=64\alpha\beta\gamma\delta>0$. In this case we know that $\varphi$ belongs to the orbit $\clo_+(\mu)$ and it gives rise to a para-K\"ahler Calabi-Yau structures on the associated solvmanifolds. Therefore we only need to consider the other orbits for which we also have $Q(\varphi)=0$.

We shall consider the following two cases: (i) Suppose Equation (\ref{simpcon}) holds, then we have $Q(\varphi)=64\alpha\beta\gamma\delta=4(M-N)^2(M+N)^2=0$. Without loss of generality, we may assume that $(M-N)^2=4\beta\gamma=0$, which implies that $M=N$ and $\alpha\delta=M^2=MN=N^2$. It follows that $\hat A=\hat B=\dots=\hat H=\hat M=\hat N=0$ hence $F(\varphi)=0$. (ii) If Equation (\ref{simpcon}) fails, then without loss of generality we may assume that $4\alpha\delta-(M+N)^2\neq 0$. The conditions in (b) imply that $\beta=\gamma=0$ and $\alpha(M-N)^2=\delta(M-N)^2=0$. These equations also lead to $\hat A=\hat B=\dots=\hat H=\hat M=\hat N=0$ hence $F(\varphi)=0$. As a consequence, we always have $F(\varphi)=0$ hence the orbits $\clo_0^\pm$ are ruled out. It is not hard to construct integrable and $F$-integrable $\varphi$ belonging to the orbits $\clo_1^\pm$, $\clo_3\cap\bigwedge_0^3V^*$ and $\clo_6=\{0\}$ explicitly.
\end{proof}

Now let us turn to the ODE system (\ref{solvsystem}). For simplicity, we shall only consider the case where the initial data for $\varphi$ lies in the orbit $\clo_-^+(\mu)$ for certain $\mu>0$. This condition translates to that the matrix
\[\begin{bmatrix}2\alpha\beta & & \alpha(N-M) & \beta(M+N) &  & \\
& 2\gamma\delta & \gamma(M+N) & \delta(M-N) & & \\
\alpha(N-M) & \gamma(M+N) & 2\alpha\gamma & & &\\
\beta(M+N) & \delta(M-N) & & 2\beta\delta & &\\
& & & & \alpha\delta+\beta\gamma-M^2 & \alpha\delta-\beta\gamma-MN\\
& & & & \alpha\delta-\beta\gamma-MN & \alpha\delta+\beta\gamma-N^2
\end{bmatrix}\]
is positive definite, which by \cite{fei2021b} is preserved under the Type IIA flow. By Sylvester's criterion, the positivity of the above matrix is equivalent to the following system of inequalities
\bea
\begin{cases}&\alpha,\beta,\gamma,\delta \textrm{ are all positive or all negative},\\
&\alpha\delta+\beta\gamma>M^2,\quad \alpha\delta+\beta\gamma>N^2,\\
&\dfrac{Q(\varphi)}{16}=-4\alpha\beta\gamma\delta+\alpha\delta(M-N)^2+\beta\gamma(M+N)^2<0,
\end{cases}\label{initial}
\eea
which are all preserved under the Type IIA flow.

The behavior of the ODE system (\ref{solvsystem}) when $M=N=0$ is analyzed in \cite{fei2021b}. For the general case, we have:
\begin{prop}
For any value of $M$ and $N$, the ODE system (\ref{solvsystem}) has finite time singularity. Let $T<\infty$ be its maximal existence time, then the limit
\[\lim_{t\to T}\alpha(t)^{-1}\varphi(t)=\varphi_{\infty}\]
exists smoothly. Moreover, $\varphi_{\infty}$ defines a harmonic almost complex structure, as detailed in \cite{fei2021b}.
\end{prop}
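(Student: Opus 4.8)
The plan is to reduce the system (\ref{solvsystem}) to an autonomous planar system and analyze its finite-time blow-up. The first observation is that the right-hand sides of the $\alpha$- and $\delta$-equations coincide, and likewise those of the $\beta$- and $\gamma$-equations, so the ratios $\alpha/\delta$ and $\beta/\gamma$ are constant along the flow and the dynamics is entirely governed by $p:=\alpha\delta$ and $q:=\beta\gamma$, which satisfy
\[\pt_t p=8\lambda^2 p(4q-s),\qquad \pt_t q=8\lambda^2 q(4p-r),\]
with $s:=(M-N)^2\geq 0$ and $r:=(M+N)^2\geq 0$. The positivity conditions (\ref{initial}), preserved by the Type IIA flow, are equivalent to $p,q>0$ together with $-4pq+ps+qr<0$ for all time; the last inequality gives $q(4p-r)>ps\geq 0$ and $p(4q-s)>qr\geq 0$, hence $4p>r$ and $4q>s$ throughout, so $p$ and $q$ are strictly increasing.

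To obtain finite-time blow-up I would argue by contradiction. If $T=\infty$, then the bounds $\pt_t p\geq 8\lambda^2(4q_0-s)\,p$ and $\pt_t q\geq 8\lambda^2(4p_0-r)\,q$ force $p,q\to\infty$, while $\pt_t(pq)=8\lambda^2 pq\,(4(p+q)-(r+s))\geq 8\lambda^2 pq\,(8\sqrt{pq}-(r+s))$ gives $\pt_t(pq)\geq 32\lambda^2(pq)^{3/2}$ for $t$ large, so $(pq)^{-1/2}$ reaches zero in finite time, a contradiction. Hence $T<\infty$; moreover the solution must leave every compact subset of the open region defined by (\ref{initial}), and since $\alpha^2$ and $\beta^2$ are positive constant multiples of $p$ and $q$ while $p,q$ are increasing, this forces $p,q\to\infty$, and in particular $\alpha\to\infty$, as $t\to T$.

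The core of the argument is the convergence of $\alpha(t)^{-1}\varphi(t)$. Since $\delta/\alpha$ and $\gamma/\beta$ are constant and $(Me^5+Ne^6)(e^{12}-e^{34})/\alpha\to0$, it suffices to prove that $q/p$ (equivalently $\beta/\alpha$) tends to a positive limit. For this I would use the conserved quantity $E:=4p-4q-r\log p+s\log q$, for which a direct computation gives $\pt_t E=0$. It yields $|p-q|=\frac{1}{4}|E+r\log p-s\log q|=O(1+\log(pq))$, so $p$ and $q$ are comparable near $T$; feeding this back into the inequalities for $\pt_t(pq)$ upgrades them to $pq\asymp(T-t)^{-2}$, hence $\log(pq)=O(1+|\log(T-t)|)$. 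Since $\pt_t\log(q/p)=8\lambda^2(4(p-q)-(r-s))=32\lambda^2(p-q-MN)$, whose right-hand side is $O(1+|\log(T-t)|)$ and thus integrable on $(0,T)$, the limit $\lim_{t\to T}\log(q/p)$ exists and is finite. Therefore $q/p$, $\beta/\alpha$ and $\gamma/\alpha$ all converge, and $\alpha(t)^{-1}\varphi(t)$ converges smoothly — in fact through left-invariant forms — to some $\varphi_\infty$.

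It remains to identify $\varphi_\infty$. From $Q(\alpha^{-1}\varphi)=\alpha^{-4}Q(\varphi)$ and $q/p\to L>0$, $s/p\to0$, $rq/p^2\to0$ one sees that $Q(\alpha^{-1}\varphi)$ tends to a strictly negative constant, so $\varphi_\infty\in\clo_-$; and since $q(\omega,\alpha^{-1}\varphi(t))$ is positive definite for every $t$ (being a positive multiple of the matrix in (\ref{initial}), which is preserved), its limit $q(\omega,\varphi_\infty)$ is positive semidefinite, which on $\clo_-$ can only happen in $\clo_-^+(\mu_\infty)$. Hence $J(\varphi_\infty)$ is an $\omega$-compatible almost complex structure, and that it is a harmonic almost complex structure follows exactly as in \cite{fei2021b}. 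I expect the main obstacle to lie in the third paragraph: pinning down $p-q$ — and so the drift of $q/p$ — precisely enough that the rescaled 3-form genuinely converges, rather than the ratio $q/p$ escaping to $0$ or $\infty$; the interplay of the conserved quantity $E$ with the matched blow-up rate $pq\asymp(T-t)^{-2}$ is exactly what makes the relevant time integral finite.
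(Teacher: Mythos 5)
Your reduction to the planar system in $p=\alpha\delta$, $q=\beta\gamma$, the blow-up-by-contradiction via $\pt_t(pq)\gtrsim (pq)^{3/2}$, and especially the conserved quantity $E=4p-4q-r\log p+s\log q$ (which I checked is indeed a first integral) form a valid and genuinely different route from the paper, which instead proves $\lim_{t\to T}(u-v)^2/u^2=0$ by a differential inequality for $(u-v)^2/u^2$ with an integrating factor, after establishing blow-up by comparison with an explicitly solvable symmetric system. Up through the smooth convergence of $\alpha(t)^{-1}\varphi(t)$ your argument works (modulo the slightly glossed step that \emph{both} $p$ and $q$ blow up, which your own first integral settles: if $q\to\infty$ with $p$ bounded, then $4(p-q)=E+r\log p-s\log q$ forces $4q\sim s\log q$, impossible).

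The genuine gap is the last step. Harmonicity of the limiting almost complex structure is \emph{not} a consequence of $\varphi_\infty\in\clo_-^+(\mu_\infty)$: every $\varphi(t)$ along the flow already lies in $\clo_-^+$ and defines an $\omega$-compatible almost complex structure, none of which is harmonic in general. What the citation to \cite{fei2021b} actually requires, and what the paper's proof establishes, is the precise algebraic relation $\alpha_\infty\delta_\infty-\beta_\infty\gamma_\infty=0$, i.e.\ $\lim_{t\to T}q/p=1$. Your argument only produces $\lim q/p=L\in(0,\infty)$ (existence of the limit via integrability of $\pt_t\log(q/p)$), and you then use only $L>0$; so the harmonicity claim is unsupported as written. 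Fortunately your own conserved quantity closes the gap immediately and makes the detour through $pq\asymp(T-t)^{-2}$ unnecessary: from $4(p-q)=E+r\log p-s\log q=O\bigl(1+\log(pq)\bigr)$ together with $p,q\to\infty$ one gets $q\leq 3p$ and $p\leq 3q$ eventually, hence $|p-q|/p\to 0$ and $q/p\to 1$, i.e.\ $L=1$, which gives $\alpha_\infty\delta_\infty=\beta_\infty\gamma_\infty$ and then harmonicity by the criterion of \cite{fei2021b}. With that replacement your proof is complete.
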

\begin{proof}
It is obvious that $\alpha/\delta$ and $\beta/\gamma$ are positive constants along the flow.

Without loss of generality, we may assume that $\alpha$, $\beta$, $\gamma$, and $\delta$ are all positive, otherwise we may consider the evolution equation for $-\alpha$, $-\beta$, $-\gamma$, and $-\delta$ instead. Since (\ref{initial}) holds along the flow, we know that
\[4\alpha\beta\gamma\delta>\alpha\delta(M-N)^2,\quad 4\alpha\beta\gamma\delta>\beta\gamma(M+N)^2,\]
so the right hand side of (\ref{solvsystem}) are all positive. It follows that there exists a small positive number $c$ depending on the initial data such that $\p_tf\geq cf$ holds when $f=\alpha,\beta,\gamma$ or $\delta$, therefore all of $\alpha$, $\beta$, $\gamma$, and $\delta$ have at least exponential growth.

Now let $u=4\alpha\delta$ and $v=4\beta\gamma$. The pair $(u,v)$ satisfies
\bea
\begin{cases}
&\p_tu=2\lambda^2u(v-(M-N)^2),\\
&\p_tv=2\lambda^2v(u-(M+N)^2),
\end{cases}\label{uv}
\eea
satisfying
\bea
u,v>0\textrm{ and } uv>u(M-N)^2+v(M+N)^2.\label{mother}
\eea
In fact, the last two lines in the system of inequalities (\ref{initial}) can be derived from (\ref{mother}) as follows. First we deduce that $u>(M+N)^2$ and $v>(M-N)^2$. So we get
\[\left(\frac{u+v-2M^2-2N^2}{2}\right)\geq (u-(M+N)^2)(v-(M-N)^2)>(M^2-N^2)^2.\]
It follows that 
\[u+v>2M^2+2N^2+2|M^2-N^2|=4\max\{M^2,N^2\}.\]
Let $S=\max\{(M+N)^2,(M-N)^2\}$ and we shall compare the ODE system (\ref{uv}) with the following ODE system
\bea
\begin{cases}
&\p_tu=2\lambda^2u(v-S),\\
&\p_tv=2\lambda^2v(u-S).
\end{cases}\label{uvnormal}
\eea
By the ODE comparison theorem, if we can show that (\ref{uvnormal}) blows up in finite time, then (\ref{uv}) blows up in even shorter time. It turns out that (\ref{uvnormal}) can be solved explicitly as follows. By taking the difference of the two equations in (\ref{uvnormal}), we get
\[\p_t(u-v)+2\lambda^2S(u-v)=0.\]
Therefore there exists a constant $C_0=u_0-v_0$ such that
\[u-v=C_0e^{-2\lambda^2St}.\]
Plug it back in (\ref{uvnormal}), we get
\bea
\p_t u=2\lambda^2u(u-C_0e^{-2\lambda^2St}-S).\label{u}
\eea
Let $w=e^{2\lambda^2St}u$, then from (\ref{u}) we get
\[\frac{\p_tw}{w(w-C_0)}=2\lambda^2e^{-2\lambda^2St}.\]
It follows that
\[w=\frac{C_0}{1-\frac{w_0-C_0}{w_0}e^{-\frac{C_0}{S}(e^{-2\lambda^2St}-1)}}.\]
As a consequence, we see that $w$ blows up at finite time $T'$, where
\[\begin{split}T'&=-\frac{1}{2\lambda^2S}\log\left[1+\frac{S}{C_0}\log\frac{w_0-C_0}{w_0}\right]\\
&=-\frac{1}{2\lambda^2S}\log\left[1+\frac{S}{\alpha_0\delta_0-\beta_0\gamma_0}\log\left(1 -\frac{\alpha_0\delta_0-\beta_0\gamma_0}{\alpha_0\delta_0}\right)\right].\end{split}\]
Now we can turn back to the ODE system (\ref{uv}), for which we now know that it blows up at a finite time $T$ and we have an estimate $T<T'$. Without loss of generality, we may assume $v(t)$ blows up at time $T$, i.e. $\lim_{t\to T}v(t)=+\infty$. Now we shall show that $u(t)$ also blows up at timer $T$.

By direct computation, we have
\[\p_t\left(\frac{(u-v)^2}{u^2}\right)=\frac{2\lambda^2}{u^2}\left[(u-v)((M+N)^2v-(M-N)^2u)-(u-v)^2(v-(M-N)^2)\right].\]
Firstly, it is easy to show that for any $\epsilon>0$, there exists a positive constant, whose optimal value is $C_\epsilon=\dfrac{8M^2N^2}{\epsilon}-(M+N)^2$, such that
\[(u-v)((M+N)^2v-(M-N)^2u)\leq C_\epsilon(u-v)^2+\epsilon u^2.\]
As $v(t)$ is increasing and $\lim_{t\to T}v(t)=+\infty$, there exists a time $T_1<T$ such that for any $T_1\leq t < T$, we have
\[v(t)-(M-N)^2\geq \frac{v(t)}{2}+C_\epsilon.\]
Therefore on the time interval $[T_1,T)$, we have
\bea
\p_t\left(\frac{(u-v)^2}{u^2}\right)\leq\frac{2\lambda^2}{u^2}\left[\epsilon u^2-\frac{v}{2}(u-v)^2\right]\leq 2\lambda^2\epsilon.\label{est}
\eea
Integrating it from $T_1$ to $t$ we see that
\[\left(\frac{u(t)-v(t)}{u(t)}\right)^2\leq C^2\]
on $[T_1,T)$, where the constant $C$ depends on $u(T_1)$, $v(T_1)$ and $\epsilon$. Therefore we know that on $[T_1,T)$, we have the estimate
\[v(t)\leq (1+C)u(t),\]
hence $\lim_{t\to\infty}u(t)=+\infty$.

Notice that the evolution equation of $u(t)$ can be written as
\[\frac{1}{2\lambda^2}\frac{\p_tu}{t}=v-(M-N)^2.\]
Integrating it, we get
\[\int_0^tv(s)\ud s=t(M-N)^2+\frac{1}{2\lambda^2}\log\frac{u(t)}{u(0)}\]
for any $0\leq t< T$. As we have proved that $\lim_{t\to T}u(t)=+\infty$, we can conclude
\[\lim_{t\to T}\int_0^tv(s)\ud s=+\infty.\]
Write $r=\dfrac{(u-v)^2}{u^2}$, then the first half of (\ref{est}) can be rewritten as
\[\p_tr+\lambda^2vr\leq 2\lambda^2\epsilon.\]
It follows that on $[T_1,T)$ we have
\[\p_t\left(e^{\lambda^2\int_0^tv(s)\ud s}r(t)\right)\leq 2\lambda^2\epsilon e^{\lambda^2\int_0^tv(s)\ud s}\leq \frac{2\epsilon}{v(T_1)}\lambda^2v(t)e^{\lambda^2\int_0^tv(s)\ud s}.\]
Integrating both sides, we get
\[e^{\lambda^2\int_0^tv(s)\ud s}r(t)\leq \frac{2\epsilon}{v(T_1)}\left(e^{\lambda^2\int_0^tv(s)\ud s}-e^{\lambda^2\int_0^{T_1}v(s)\ud s}\right)+e^{\lambda^2\int_0^{T_1}v(s)\ud s}r(T_1).\]
Divide both sides by $e^{\lambda^2\int_0^tv(s)\ud s}$ and let $t\to T$, we get
\[\lim_{t\to T}r(t)\leq\frac{2\epsilon}{v(T_1)}\]
for any $\epsilon>0$, hence we have proved
\bea
\lim_{t\to T}\frac{(u(t)-v(t))^2}{u^2(t)}=0,\label{est2}
\eea
or equivalently
\bea
\lim_{t\to T}\frac{u(t)}{v(t)}=1.\label{est3}
\eea
As $u=4\alpha\delta$ and $v=4\beta\gamma$ with $\alpha/\delta$ and $\beta/\gamma$ are positive constants, the limit (\ref{est3}) implies that
\[\begin{split}
&\lim_{t\to T}\frac{\beta(t)}{\alpha(t)}=\beta_\infty>0,\\
&\lim_{t\to T}\frac{\gamma(t)}{\alpha(t)}=\gamma_\infty>0,\\
&\lim_{t\to T}\frac{\delta(t)}{\alpha(t)}=\delta_\infty>0.
\end{split}\]
Let $\alpha_\infty=1$. Since $M$ and $N$ are constants, we conclude that
\[\lim_{t\to T}\frac{\varphi(t)}{\alpha(t)}=\alpha_\infty(e^{135}+e^{136})+\beta_\infty(e^{145}-e^{146})+\gamma_\infty(e^{235}-e^{236})-\delta_\infty(e^{245}+e^{246})\]
exists smoothly. Moreover, the estimate (\ref{est2}) implies that
\[\alpha_\infty\delta_\infty-\beta_\infty\gamma_\infty=0,\]
so from \cite{fei2021b} we know that the almost complex structure associated to $\varphi_\infty$ is harmonic.
\end{proof}
\begin{rmk}
The limit (\ref{est2}) can actually be strengthen to
\[\lim_{t\to T}\frac{(u(t)-v(t))^2}{u(t)}=0.\]
\end{rmk}
\end{ex}

\bibliographystyle{alpha}

\bibliography{C:/Users/benja/Dropbox/Documents/Source}

\end{document}